\theoremstyle{plain}
\newtheorem{theorem}{Theorem}
\newtheorem*{theorem*}{Theorem}
\newtheorem{corollary}{Corollary}
\newtheorem*{corollary*}{Corollary}
\newtheorem{lemma}{Lemma}
\newtheorem{proposition}{Proposition}
\newtheorem{example}{Example}
\newtheorem*{example*}{Example}
\newtheorem{remark}{Remark}
\theoremstyle{definition}
\begin{document}
\title{Epsilon multiplicity for Noetherian graded algebras}
\author{Suprajo Das}
\address{Suprajo Das, Chennai Mathematical Institute, H1, SIPCOT IT Park, Siruseri, Kelambakkam 603103, India}
\email{dassuprajo@gmail.com}
\maketitle
\begin{abstract}
The notion of epsilon multiplicity was originally defined by Ulrich and Validashti as a limsup and they used it to detect integral dependence of modules. It is important to know if it can be realized as a limit. In this article we show that the relative epsilon multiplicity of reduced Noetherian graded algebras over an excellent local ring exists as a limit.  An important special case of Cutkosky's result concerning epsilon multiplicity, is obtained as a corollary of our main theorem. We also develop the notion of mixed epsilon multiplicity for monomial ideals.
\end{abstract}
\section{Introduction}
The purpose of this paper is twofold: to prove a very general theorem on the relative epsilon multiplicity of graded algebras over a local ring and to develop the notion of mixed epsilon multiplicity for monomial ideals. The idea of $\varepsilon$-multiplicity originates in the works of Kleiman, Ulrich and Validashti.
\subsection{Relative epsilon multiplicity for graded algebras}
Suppose that $(R,m_R)$ is a Noetherian local ring and $$A = \bigoplus\limits_{n\in\mathbb{N}}A_n \subset B = \bigoplus\limits_{n\in\mathbb{N}}B_n$$ is a graded inclusion of standard graded Noetherian $R$-algebras with $A_0 = B_0 = R$. Then the \emph{relative $\varepsilon$-multiplicity of $A$ and $B$} is defined to be 
\begin{align*}
    \varepsilon\left(A\mid B\right) &:= \limsup\limits_{n\to\infty}\dfrac{(\dim B - 1)!}{n^{\dim B - 1}}l_R\left(H^0_{m_R}\left(\dfrac{B_n}{A_n}\right)\right)\\
    &= \limsup\limits_{n\to\infty}\dfrac{(\dim B - 1)!}{n^{\dim B - 1}}l_R\left(\dfrac{A_n\colon_{B_n}m_R^{\infty}}{A_n}\right).
\end{align*}

In \cite{BJ2}, it is proven that this invariant is finite. The question of whether $\varepsilon$-multiplicity actually exists as a limit, has already been considered in several papers such as \cite{DC1}, \cite{DC3}, \cite{DC4}, \cite{DC5}, \cite{DC6}, \cite{BJ2}, \cite{J} and \cite{das}. 

We are ready to state one of the main theorems from our paper.
\begin{theorem*}[Theorem $\ref{maintheorem}$]
Suppose that $(R,m_R)$ is an excellent local ring and $$A = \bigoplus\limits_{n\in\mathbb{N}}A_n \subset B = \bigoplus\limits_{n\in\mathbb{N}}B_n$$ is a graded inclusion of reduced graded Noetherian $R$-algebras with $A_0 = B_0 = R$. Suppose that if $P$ is a minimal prime ideal of $B$ (which is necessarily homogeneous) then $P\cap R \neq m_R$ and if $A_1\subset P$ then $A_n \subset P$ for all $n\geq 1$. Then the limit $$\lim\limits_{n\to\infty}\dfrac{l_R\left(H^0_{m_R}\left(B_n/A_n\right)\right)}{n^{\dim B -1}}$$ exists.
\end{theorem*}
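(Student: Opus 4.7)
The plan is to adapt the Okounkov body strategy Cutkosky used for epsilon multiplicity to the relative graded-algebra setting. My first step is to reduce to the case where $B$ is a graded integral domain. Let $P_1,\ldots,P_s$ be the (homogeneous) minimal primes of $B$, and let $A^{(i)}$ denote the image of $A$ in $B/P_i$. Since $B$ is reduced, the natural map $B\hookrightarrow \prod_i B/P_i$ is injective and its cokernel $C$ has strictly smaller Krull dimension. By the long exact sequence for $H^0_{m_R}$, the question reduces to \emph{(i)} existence of the limit for each graded domain pair $(A^{(i)}\subset B/P_i)$, and \emph{(ii)} a bound $l_R(H^0_{m_R}(C_n))=o(n^{\dim B -1})$. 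The hypothesis $P\cap R\neq m_R$ keeps $\dim(B/P)$ at the right level, while the condition $A_1\subset P\Rightarrow A_n\subset P$ ensures that whenever $A^{(i)}_1$ is trivial, $A^{(i)}$ itself is trivial, eliminating degenerate cases that would otherwise prevent the per-prime Okounkov body construction from applying.

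For the domain case, I would exploit the excellence of $R$ to pass freely to the reduced normalization and the completion, and then construct a suitable rank-one valuation $\nu$ on the fraction field of $B$, with value group of rational rank $\dim B$, via a lexicographic refinement associated to a chain of prime ideals. The graded pieces $(A_n\colon_{B_n} m_R^{\infty})/A_n$ should translate, up to a bounded correction coming from a residue-field extension, into lattice-point counts for slices of a subsemigroup $\Gamma\subset \mathbb{N}^{\dim B}$. If $\Gamma$ satisfies the Okounkov--Kaveh--Khovanskii growth conditions, the asymptotic count of the $n$-th slice is controlled by the Euclidean volume of the associated convex body, delivering existence of the limit.

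The main obstacle is showing that the saturation operation $(-)\colon_{B_n} m_R^{\infty}$ fits inside this valuation-semigroup framework: unlike taking ordinary powers or symbolic powers, saturation is not obviously compatible with quotients or normalization, and the resulting semigroup need not be finitely generated. Verifying the hypotheses of the relevant Khovanskii-type theorem will require both the uniform linear bound on $l_R(H^0_{m_R}(B_n/A_n))$ established in \cite{BJ2} and the excellence of $R$, which guarantees finite normalizations, well-behaved completions, and that the integral closure of any finitely generated $R$-subalgebra of the fraction field of $B$ is again Noetherian. Reassembling the per-prime limits into a single limit for $(A\subset B)$ is a secondary difficulty that is resolved by the $A_1$-hypothesis, which ensures the semigroups built for distinct minimal primes are mutually compatible and can be glued without contaminating the leading term.
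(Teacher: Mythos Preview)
Your plan is a reasonable high-level strategy, but it diverges substantially from the paper's argument and, as written, leaves the decisive step unresolved. The paper does \emph{not} reduce to the case where $B$ is a domain. Instead, Lemma~\ref{reduction} embeds $B$ into a reduced \emph{standard} graded $R$-algebra $C$ via a finite graded extension, and shows $l_R\bigl(H^0_{m_R}(C_n/B_n)\bigr)=o(n^{\dim B-1})$; this reduces the problem to $B$ standard graded while keeping $A$ arbitrary. Then the key manoeuvre is to avoid handling the saturation directly: Swanson's linear growth (Theorem~\ref{linear}) gives an integer $c$ with
\[
l_R\!\left(\frac{A_n:_{B_n}m_R^\infty}{A_n}\right)=l_R\!\left(\frac{A_n:_{B_n}m_R^\infty}{(m_R^{cn}B_n)\cap(A_n:_{B_n}m_R^\infty)}\right)-l_R\!\left(\frac{A_n}{(m_R^{cn}B_n)\cap A_n}\right),
\]
and each term on the right is exactly of the form treated by the black-box Theorem~\ref{main2} (which does \emph{not} require its subalgebra to be Noetherian, so it applies to $A'=\bigoplus_n(A_n:_{B_n}m_R^\infty)$ as well as to $A$). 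The hypothesis on $A_1\subset P$ is used only to verify the corresponding hypothesis of Theorem~\ref{main2} for $A'$. All Okounkov-body machinery is hidden inside the cited result from \cite{das}.

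Your proposal, by contrast, tries to reprove that machinery from scratch. Two concrete gaps: first, your reduction to domains is not as clean as stated. Passing from $B_n/A_n$ to $\prod_i (B/P_i)_n/A^{(i)}_n$ produces, via the snake lemma, error terms coming both from the cokernel of $B\hookrightarrow\prod_i B/P_i$ \emph{and} from the cokernel of $A\hookrightarrow\prod_i A^{(i)}$; you only mention the former. Both need to be shown $o(n^{\dim B-1})$ after applying $H^0_{m_R}$, which requires an argument of the type in Lemmas~\ref{bound} and~\ref{reduction}. Second, and more seriously, you explicitly flag ``the main obstacle'' (fitting the saturation $(A_n:_{B_n}m_R^\infty)$ into a valuation-semigroup framework) without resolving it; this is precisely the hard content that the paper outsources to Theorem~\ref{main2} via the difference decomposition above. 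Until that obstacle is actually overcome, the proposal is a plan rather than a proof.
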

In \cite{das}, the aforementioned result was established by the author, with the additional assumption that $A$ and $B$ are standard graded $R$-algebras. As an application of our main theorem, we show in Corollary \ref{noeth} that it recovers an important special case of Cutkosky's result \cite[Theorem 6.3.]{DC6}.
\subsection{Epsilon multiplicity of an ideal}
Given a Noetherian local ring $(R,m_R)$ of Krull dimension $d$, the \emph{epsilon multiplicity} of an ideal $I\subset R$ is defined to be $$\varepsilon_R(I) := \limsup\limits_{n\to\infty} \dfrac{l_R\left(H^0_{m_R}\left(R/I^n\right)\right)}{n^d}.$$ By considering the graded inclusion of standard graded Noetherian $R$-algebras $$A:= \bigoplus\limits_{n=0}^{\infty}I_nt^n \subset B:= \bigoplus\limits_{n=0}^{\infty} Rt^n,$$ it is not hard to see that $\varepsilon_R(I) = \varepsilon\left(A\mid B\right)$. It can be also verified that if $I$ is an $m_R$-primary ideal then $\varepsilon_R(I)=e_R(I)$, where $e_R(I)$ denotes the Hilbert-Samuel multiplicity of $I$. In \cite{BJ2}, Ulrich and Validashti prove that if the ambient ring $R$ is also equidimensional and universally catenary, then $\varepsilon_R(I)> 0$ if and only if the analytic spread of $I$ is $d$.

A sequence of ideals $\mathcal{I}=\{I_n\}_{n\in\mathbb{N}}$ in a commutative ring $R$ is called a \emph{graded family} if $I_0 = R$ and $\bigoplus_{n\in\mathbb{N}}I_n$ is a graded $R$-algebra. A graded family of ideals $\mathcal{I}=\{I_n\}_{n\in\mathbb{N}}$ is called \emph{Noetherian} if $\bigoplus_{n\in\mathbb{N}}I_n$ is a finitely generated graded $R$-algebra.  A graded family of ideals $\mathcal{I}=\{I_n\}_{n\in\mathbb{N}}$ in $R$ is said to be a \emph{filtration} if $I_0 \supset I_1 \supset I_2 \supset \cdots$.

The result stated below is due to Cutkosky.

\begin{theorem*}\cite[Theorem 6.3.]{DC6}
Suppose that $(R,m_R)$ is an analytically unramified local ring of Krull dimension $d>0$ and $\mathcal{I} = \{I_n\}_{n\in\mathbb{N}}$ is a graded family of ideals in $R$. Further suppose that there exists an integer constant $c>0$ such that 
\begin{equation}\label{lingrow}
 I_n \cap m_R^{cn} = (I_n \colon_R m_R^{\infty})\cap m_R^{cn}
\end{equation}
 for all $n\geq 1$. Assume that if $P$ is a minimal prime ideal of $R$ then $I_1 \subset P$ implies $I_n \subset P$ for all $n\geq 1$. Then the limit $$\lim\limits_{n\to\infty}\dfrac{l_R\left(H^0_{m_R}\left(R/I_n\right)\right)}{n^d}$$ exists.
\end{theorem*}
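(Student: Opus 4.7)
The natural approach is to embed Cutkosky's setup into the framework of Theorem \ref{maintheorem}. I would put $B := R[t] = \bigoplus_{n\geq 0} Rt^n$ and $A := \bigoplus_{n \geq 0} I_n t^n \subset B$. Then $B_n/A_n \cong R/I_n$ as $R$-modules and $\dim B = d+1$, so the quantity whose limit is sought is exactly $l_R(H^0_{m_R}(B_n/A_n))/n^{\dim B - 1}$, matching the shape of the conclusion of the main theorem.

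I would first verify the structural hypotheses of Theorem \ref{maintheorem} in this setup. Since $R$ is analytically unramified, $R$ is reduced, so $B = R[t]$ is reduced and the graded subring $A$ inherits this. The minimal primes of $R[t]$ are the extensions $P[t]$ of minimal primes $P \subset R$; as $d>0$, no such $P$ equals $m_R$, giving $P[t] \cap R = P \neq m_R$. If $A_1 = I_1 t \subset P[t]$, then $I_1 \subset P$, hence $I_n \subset P$ by the hypothesis on minimal primes of $R$, and so $A_n = I_n t^n \subset P[t]$. Thus the structural assumptions of Theorem \ref{maintheorem} are met.

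There are two substantive gaps. First, Theorem \ref{maintheorem} requires $A$ and $B$ to be \emph{Noetherian} $R$-algebras, which amounts to the graded family $\{I_n\}$ being Noetherian; this is not assumed by Cutkosky. Second, Theorem \ref{maintheorem} demands that $R$ be \emph{excellent}, while the hypothesis here is only analytic unramifiedness. The second gap is comparatively mild: pass to the completion $\widehat R$, which is excellent and, by analytic unramifiedness, reduced. Completion is faithfully flat, so the linear growth condition and the minimal prime condition transfer, and $l_R(H^0_{m_R}(R/I_n)) = l_{\widehat R}(H^0_{m_{\widehat R}}(\widehat R/I_n\widehat R))$ since both compute the length of the same finite-length $m_R$-torsion module.

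The genuinely hard step is the Noetherian reduction. The linear growth condition $I_n \cap m_R^{cn} = (I_n \colon_R m_R^{\infty}) \cap m_R^{cn}$ is equivalent to $m_R^{cn}(I_n \colon_R m_R^{\infty}) \subset I_n$, which forces the torsion module $(I_n \colon_R m_R^{\infty})/I_n$ to depend only on the image of $I_n$ in $R/m_R^{cn}$. Exploiting this, I would approximate $\{I_n\}$ by a Noetherian subfamily $\{J_n\}$ with $J_n \subset I_n$ whose image in $R/m_R^{cn}$ eventually agrees with that of $I_n$, apply Theorem \ref{maintheorem} to $\bigoplus J_n t^n \subset R[t]$ to obtain the limit for $\{J_n\}$, and then establish a comparison estimate $l_R(H^0_{m_R}(R/I_n)) - l_R(H^0_{m_R}(R/J_n)) = o(n^d)$ using the analytic unramifiedness of $R$ (via Rees valuation or Izumi-type bounds controlling how $\{I_n\}$ and its Noetherian approximation can diverge inside $m_R^{cn}$). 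Constructing such a $\{J_n\}$ and controlling the error term is where the real work lies; in the special case that $\{I_n\}$ is itself Noetherian, no approximation is needed and the argument collapses to a direct application of Theorem \ref{maintheorem}, which appears to be the content of Corollary \ref{noeth}.
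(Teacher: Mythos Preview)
The paper does not prove this theorem. It is quoted in the introduction as Cutkosky's result from \cite{DC6}, and the paper's own contribution is precisely the \emph{Noetherian} special case, Corollary~\ref{noeth}, derived directly from Theorem~\ref{maintheorem}. You have identified this correctly in your final sentence: the reduction $A=\bigoplus I_n t^n \subset B=R[t]$, the verification of the minimal-prime hypotheses, and the observation that $\dim B=d+1$ constitute exactly the proof of Corollary~\ref{noeth} (with the additional hypothesis there that $R$ is excellent and reduced rather than merely analytically unramified).

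Where your proposal goes beyond the paper --- the attempt to recover Cutkosky's full statement --- there is a genuine gap. The Noetherian approximation step is only a wish: you have not constructed a Noetherian subfamily $\{J_n\}$ whose image in $R/m_R^{cn}$ agrees with that of $I_n$, and there is no mechanism offered for the $o(n^d)$ error bound. The phrases ``Rees valuation or Izumi-type bounds'' do not point to an actual argument here; Izumi's theorem compares valuations, not lengths of local cohomology modules, and it is unclear how it would control $l_R(H^0_{m_R}(R/I_n))-l_R(H^0_{m_R}(R/J_n))$. Cutkosky's proof in \cite{DC6} does not proceed by Noetherian approximation at all: it uses cone/Okounkov-body volume methods that handle arbitrary graded families directly, and the linear growth hypothesis enters to guarantee that the relevant saturation lengths are captured by a bounded region. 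A small side remark: your claimed equivalence between the linear growth condition and $m_R^{cn}(I_n:m_R^{\infty})\subset I_n$ only goes one way; the intersection condition implies the product inclusion, but not conversely.

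In summary: your proposal reproduces the paper's Corollary~\ref{noeth} faithfully, but the extension to the non-Noetherian case is not a proof and does not reflect how the cited theorem is actually established.
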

 A mild modification \cite[Proposition 2.4.]{montano} of Swanson's main theorem in \cite{IS} shows that any Noetherian graded family of ideals satisfy the linear growth condition \eqref{lingrow} of the previous theorem. We provide a simple example showing that the behaviour of the sequence $\left\{l_R\left(H^0_{m_R}\left(R/I_n\right)\right)\right\}_{n\in\mathbb{N}}$ can be erratic if we allow arbitrary graded families $\mathcal{I} = \{I_n\}_{n\in\mathbb{N}}$.
\begin{example*}[Example $\ref{counter}$]
Let $R=K[X,Y]$ be the polynomial ring in two variables over a field $K$ and let $m_R = (X,Y)$ be the graded maximal ideal of $R$. Let $\{a_n\}_{n\in\mathbb{N}}$ be any sequence of natural numbers. Define
\begin{align*}
 I_0 &= R\\
 I_n &= \left(XY^{a_n}, X^2\right) \quad \forall n\geq 1.
\end{align*}
Then $\mathcal{I} = \{I_n\}_{n\in\mathbb{N}}$ is a graded family ideals in $R$ and $$l_R\left(H^0_{m_R}\left(R/I_n\right)\right) = a_n \quad \forall n\geq 1.$$
\end{example*}

Even for $I$-adic filtrations, the sequence $\left\{l_R\left(H^0_{m_R}\left(R/I^n\right)\right)\right\}_{n\in\mathbb{N}}$ can be difficult to predict. A suprising example given in \cite{DC3} shows that there exist an ideal $I$ in a regular local ring $R$ such that $\varepsilon_R(I)$ is an irrational number. This shows that the sequence $\left\{l_R\left(H^0_{m_R}\left(R/I^n\right)\right)\right\}_{n\in\mathbb{N}}$ does not have polynomial growth eventually, unlike the classical Hilbert-Samuel function. However, this sequence is well behaved in the case of monomial ideals. The next theorem is a special case of a result of Herzog, Puthenpurakal and Verma.

A function $\sigma \colon \mathbb{N}^r \to \mathbb{Q}$ is said to be \emph{periodic} if there exists a positive integer $a$ such that $$\sigma(n_1,\ldots,n_{i-1},n_i+a,n_{i+1},\ldots,n_r) = \sigma(n_1,\ldots,n_r)$$ for all $(n_1,\ldots,n_r)\in\mathbb{N}^r$ and $i=1,\ldots,r$.

\begin{theorem*}\cite[Theorem 2.5.]{J}
 Let $R=K[X_1,\ldots,X_d]$ be the polynomial ring in $d$ variables over a field $K$, $m_R$ be the graded maximal ideal of $R$, and $I\subset R$ a monomial ideal. Assume that the analytic spread of $I$ is $d$. Then there exist periodic functions $\sigma_0,\ldots,\sigma_d \colon \mathbb{N} \to \mathbb{Q}$ such that there is an equality $$l_R\left(H^0_{m_R}\left(\dfrac{R}{I^n}\right)\right) = \sum\limits_{i=0}^d \sigma_i(n)n^i$$ for all $n>>0$. Moreover, $\sigma_d$ is a non-zero constant function.
\end{theorem*}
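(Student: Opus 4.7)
First, I would reduce the problem to a lattice-point count. Because $I$ is a monomial ideal, so is $I^n$ and its saturation $I^n : m_R^\infty$, and the $K$-vector space $H^0_{m_R}(R/I^n) = (I^n : m_R^\infty)/I^n$ is $\mathbb{Z}^d$-graded with each multi-graded piece of dimension $0$ or $1$. Hence $l_R(H^0_{m_R}(R/I^n)) = |\Omega_n|$ where $\Omega_n := E(I^n : m_R^\infty) \setminus E(I^n)$ and $E(J) \subset \mathbb{N}^d$ denotes the exponent set of a monomial ideal $J$. The plan is to realize $\Omega_n$, for $n$ large, as the lattice-point set of a rational polytopal region that scales linearly in $n$ up to a periodic translate, then invoke Ehrhart theory.

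Next, I would describe $\Omega_n$ polyhedrally. Let $NP(I) \subset \mathbb{R}^d_{\geq 0}$ be the Newton polyhedron of $I$; then $E(\overline{I^n}) = n \cdot NP(I) \cap \mathbb{N}^d$, and because the Rees algebra $R[It]$ is a finitely generated $R$-algebra, the difference between $E(I^n)$ and $E(\overline{I^n})$ is governed by a correction whose dependence on $n$ is periodic, coming from the non-normal part of the monomial semigroup. For the saturation, I would use the primary decomposition of monomial ideals: $I^n : m_R^\infty$ is the intersection of precisely those irreducible monomial components $(X_j^{a_j} : j \in F)$ of $I^n$ with $F \subsetneq \{1,\ldots,d\}$, and each such component is read off from a face of $n\cdot NP(I)$ parallel to one of the coordinate subspaces.

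Combining these descriptions, $\Omega_n$ decomposes as a finite disjoint union of sets of the form $(n \cdot P_j + v_j(n)) \cap \mathbb{N}^d$, where each $P_j$ is a bounded rational polytope in $\mathbb{R}^d$ and each $v_j(n) \in \mathbb{Z}^d$ depends on $n$ only through its residue class modulo a fixed integer $a$. Applying the Ehrhart--MacDonald quasi-polynomial theorem to each scaled-and-translated polytope yields that $|\Omega_n|$ is a quasi-polynomial in $n$ of degree $\le d$ with period dividing $a$, giving the desired expression $l_R(H^0_{m_R}(R/I^n)) = \sum_{i=0}^{d} \sigma_i(n)\, n^i$ with each $\sigma_i\colon \mathbb{N}\to \mathbb{Q}$ periodic. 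The hypothesis $\ell(I) = d$ ensures that a distinguished ``bulk'' piece $P$ has dimension exactly $d$ and positive volume, and since the top coefficient of an Ehrhart quasi-polynomial is the normalized volume (independent of the periodic translate), $\sigma_d$ is forced to be the non-zero \emph{constant} $d!\cdot \mathrm{vol}(P)$ rather than merely a periodic function.

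The main obstacle is the polyhedral description in the second step: controlling $E(I^n)$ (which deviates from $n\cdot NP(I)\cap \mathbb{N}^d$ by a non-normal correction) and $E(I^n : m_R^\infty)$ (described via primary decomposition) simultaneously, and verifying that their difference genuinely decomposes into the claimed scaled-translated pieces whose dependence on $n$ is purely periodic. This is where the monomial hypothesis is essential: for non-monomial ideals the primary decomposition of $I^n$ can behave erratically in $n$, but for monomial ideals the face-by-face description of the primary components of $I^n$, together with the Noetherianity of the Rees algebra and the eventual stabilization of the associated primes of $R/I^n$, pins down the required periodic structure.
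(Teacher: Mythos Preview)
Your approach differs substantially from the paper's. The paper does not prove this cited theorem directly but establishes the multi-ideal generalization in Theorem~\ref{mainmain}, whose $r=1$ specialization is the relevant comparison. There, quasi-polynomiality comes not from Ehrhart theory but from Takayama's formula for the $\mathbb{Z}^d$-graded pieces of $H^0_{m_R}(R/I^n)$ combined with the Presburger-counting machinery of Woods (Theorem~\ref{Presburger}, Proposition~\ref{quasi}, Lemma~\ref{quasi2}, Theorem~\ref{quasigrowth}): one groups lattice points $\mathbf{a}$ by the simplicial complex $\Delta_{\mathbf{a}}(I^n)$ they determine, and each such count is a Presburger counting function, hence a quasi-polynomial of degree $\le d$. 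Constancy of $\sigma_d$ is then obtained by a separate squeeze argument: Lemma~\ref{genjmult} shows that $l_R\bigl(H^0_{m_R}(I^{an}/I^{an+b})\bigr)$ is a polynomial of degree $\le d-1$ in $n$, and subadditivity of $l_R\bigl(H^0_{m_R}(-)\bigr)$ forces the top coefficients across residue classes modulo the period to coincide; non-vanishing is imported from the Ulrich--Validashti criterion $\ell(I)=d\Rightarrow\varepsilon_R(I)>0$. Your polyhedral route has the advantage of delivering $\sigma_d$ as a normalized volume in the same stroke, connecting directly to the Jeffries--Monta\~{n}o formula that the paper only cites; the paper's Presburger route, in turn, sidesteps exactly the obstacle you flagged, since it never needs an explicit decomposition of $\Omega_n$ into scaled-translated rational polytopes --- the first-order formalism absorbs both the non-normal correction to $E(I^n)$ and the description of $E(I^n:m_R^\infty)$ uniformly. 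Your decomposition claim is plausible but, as you acknowledge, is asserted rather than proved; making it rigorous (controlling the conductor of the Rees semigroup in its normalization and the primary components of $I^n$ simultaneously, and showing the pieces are genuinely of the form $(nP_j+v_j(n))\cap\mathbb{N}^d$ with $v_j$ periodic) is where the real work of your approach would lie.
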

In particular, this shows that the $\varepsilon$-multiplicity of a monomial ideal having maximal analytic spread, is a positive rational number but we cannot always expect it to be an integer (see \cite[Example 6.3.]{JM}). In this article, we produce a multi-graded generalization of the above theorem. 
\subsection{Mixed epsilon multiplicity for monomial ideals}
\begin{theorem*}[Theorem \ref{mainmain}]
 Let $R=K[X_1,\ldots,X_d]$ be the polynomial ring in $d$ variables over a field $K$, $m_R$ be the graded maximal ideal of $R$ and $I_1,\ldots,I_r$ be monomial ideals in $R$. Assume that the analytic spread of $I_1\cdots I_r$ is $d$. Then there exist periodic functions $\sigma_{i_1,\ldots,i_r} \colon \mathbb{N}^r \to \mathbb{Q}$ for all $i_1+\cdots +i_r\leq d$ such that there is an equality $$l_R\left(H^0_{m_R}\left(\dfrac{R}{I_1^{n_1}\cdots I_r^{n_r}}\right)\right) = \sum\limits_{i_1+\cdots+i_r\leq d} \sigma_{i_1,\ldots,i_r}(n_1,\ldots,n_r)n_1^{i_1}\cdots n_r^{i_r}$$ for all $n_1,\ldots,n_r>>0$. Moreover, $\sigma_{i_1,\ldots,i_r}$ is a constant function whenever $i_1+\cdots +i_r = d$ and $\sigma_{i_1,\ldots,i_r}\neq 0$ for some $i_1+\cdots +i_r = d$
\end{theorem*}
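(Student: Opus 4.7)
The plan is to adapt the technique of Herzog--Puthenpurakal--Verma cited above to the multi-graded setting, by exploiting the fine $\mathbb{Z}^d$-grading carried by monomial ideals together with parametric Ehrhart theory. Write $\mathbf{n} = (n_1,\ldots,n_r)$ and $J_{\mathbf{n}} := I_1^{n_1}\cdots I_r^{n_r}$, which is again a monomial ideal; hence so is its saturation $J_{\mathbf{n}} \colon_R m_R^{\infty}$.

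Since $R/J_{\mathbf{n}}$ is $\mathbb{Z}^d$-graded with each graded piece of $K$-dimension $0$ or $1$, we have
$$l_R\!\left(H^0_{m_R}(R/J_{\mathbf{n}})\right) \;=\; \#\bigl\{x^{\alpha} \,:\, x^{\alpha} \in (J_{\mathbf{n}} \colon_R m_R^{\infty}) \setminus J_{\mathbf{n}}\bigr\},$$
so the task becomes counting these ``holes'' as a function of $\mathbf{n}$. Let $P_j$ denote the Newton polyhedron of $I_j$ in $\mathbb{R}^d_{\geq 0}$. The Minkowski sum $n_1 P_1 + \cdots + n_r P_r$ is the Newton polyhedron of $J_{\mathbf{n}}$, and its lattice points near the bounded faces record both the staircase of $J_{\mathbf{n}}$ and that of its $m_R$-saturation. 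The hole region is therefore a finite union of rational polytopes whose supporting hyperplanes are \emph{linear} functions of $\mathbf{n}$.

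By the multivariate (parametric) Ehrhart theorem, for $\mathbf{n}$ in a sufficiently large cone the lattice-point count of each such cell is a quasi-polynomial in $\mathbf{n}$ of total degree at most $d$. Summing over cells yields
$$l_R\!\left(H^0_{m_R}(R/J_{\mathbf{n}})\right) \;=\; \sum_{i_1+\cdots+i_r \leq d} \sigma_{i_1,\ldots,i_r}(\mathbf{n})\, n_1^{i_1}\cdots n_r^{i_r}$$
with each $\sigma_{i_1,\ldots,i_r}$ periodic in $\mathbf{n}$. The top-degree coefficients $\sigma_{i_1,\ldots,i_r}$ with $i_1+\cdots+i_r = d$ arise as Euclidean volumes of top-dimensional limit cells obtained by rescaling $\mathbf{n}$, and are therefore constants. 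Specializing $n_1 = \cdots = n_r = n$ reduces the left-hand side to $l_R(H^0_{m_R}(R/(I_1\cdots I_r)^n))$; by the cited single-variable theorem, together with the hypothesis that the analytic spread of $I_1\cdots I_r$ equals $d$, this single-variable quasi-polynomial has exact degree $d$, which forces some top-degree coefficient $\sigma_{i_1,\ldots,i_r}$ to be nonzero.

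The main obstacle is the cell decomposition of the hole region $(J_{\mathbf{n}} \colon_R m_R^{\infty}) \setminus J_{\mathbf{n}}$ and the verification that its defining inequalities depend linearly on $\mathbf{n}$. The operation of $m_R$-saturation does not commute with Minkowski sums in an obvious way, so one must carefully analyze which bounded faces of $n_1 P_1 + \cdots + n_r P_r$ contribute to the embedded primary components of $J_{\mathbf{n}}$ supported at $m_R$ and track how these faces deform as $\mathbf{n}$ varies. Once this polyhedral description is in hand, constancy of the top-degree coefficients follows from a standard volume argument, and the remaining claims assemble routinely.
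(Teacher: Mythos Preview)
Your sketch identifies the right overall shape (count monomials in the saturation but not in the ideal, show this count is eventually quasi-polynomial of degree $\le d$, then argue separately that the top part is constant and nonzero), but it has a genuine gap at precisely the point you flag as ``the main obstacle.'' You assert that the hole region $(J_{\mathbf n}\!:\!m_R^\infty)\setminus J_{\mathbf n}$ is a finite union of rational polytopes whose walls vary linearly in $\mathbf n$, and then invoke parametric Ehrhart theory. But the exponent set of $J_{\mathbf n}=I_1^{n_1}\cdots I_r^{n_r}$ is \emph{not} the set of lattice points in $n_1P_1+\cdots+n_rP_r$: that Minkowski sum is the Newton polyhedron, whose lattice points give the integral closure $\overline{J_{\mathbf n}}$, not $J_{\mathbf n}$ itself. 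Membership $x^{\alpha}\in J_{\mathbf n}$ is instead the Presburger condition ``there exist $t_{j,k}\in\mathbb N$ with $\sum_k t_{j,k}=n_j$ and $\alpha\succeq\sum_{j,k} t_{j,k}u_{j,k}$,'' which is not polyhedral in $(\alpha,\mathbf n)$. Likewise, the $m_R$-saturation is not read off from the bounded faces of the Newton polyhedron in the direct way you suggest. So the Ehrhart step, as written, does not apply; what is actually needed is the Presburger/piecewise quasi-polynomial machinery.

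That is exactly how the paper proceeds, and in a way that also bypasses any ad hoc analysis of the saturation. Takayama's formula expresses the length as a finite sum $\sum_{\Delta'} \dim_K \tilde H_{-1}(\Delta',K)\, f_{\Delta'}(\mathbf n)$, where $f_{\Delta'}(\mathbf n)=\#\{\alpha\in\mathbb N^d:\Delta_{\alpha}(J_{\mathbf n})=\Delta'\}$. Each $f_{\Delta'}$ is shown (Proposition~\ref{quasi}, Lemma~\ref{quasi2}) to be a Presburger counting function in $\mathbf n$, hence eventually a quasi-polynomial of degree $\le d$, by the theorem of Woods. For the constancy of the top coefficients the paper does \emph{not} use a volume argument (which would only be available once the problem is genuinely polyhedral); instead it sandwiches $Q(a\mathbf n+\mathbf b)$ between $Q(a\mathbf n)$ and $Q(a(\mathbf n+\mathbf 1))$ up to errors of total degree $\le d-1$ supplied by Lemma~\ref{genjmult}, forcing $\sigma_{i_1,\ldots,i_r}(\mathbf b)=\sigma_{i_1,\ldots,i_r}(\mathbf 0)$ whenever $i_1+\cdots+i_r=d$. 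Nonvanishing in top degree is obtained by showing, via the analytic spread hypothesis and \cite[Theorem~4.4]{BJ2}, that the ray limit $\lim_{t\to\infty} t^{-d}\,l_R\!\bigl(H^0_{m_R}(R/J_{t\mathbf n})\bigr)$ is nonzero for all $\mathbf n\succeq\mathbf 1$; your specialization idea would also work here once the rest is in place.
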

Let the assumptions be as in the preceeding theorem. Define $$P(n_1,\ldots,n_r) = \sum\limits_{i_1+\cdots+i_r= d} \sigma_{i_1,\ldots,i_r}(n_1,\ldots,n_r)n_1^{i_1}\cdots n_r^{i_r},$$ which is a homogeneous polynomial in $n_1,\ldots,n_r$ of total degree $d$ with rational coefficients. We define the \emph{mixed epsilon multiplicity} of $R$ of type $(d_1,\ldots,d_r)$ with respect to the monomial ideals $I_1,\ldots,I_r$, denoted by $\varepsilon_R(I_1^{[d_1]},\ldots,I_r^{[d_r]})$, from the coefficients of the homogeneous polynomial $P(n_1,\ldots,n_r)$. Specifically, we write $$P(n_1,\ldots,n_r) = \sum\limits_{d_1+\cdots+d_r=d}\dfrac{1}{d_1!\cdots d_r!} \varepsilon_R(I_1^{[d_1]},\ldots,I_r^{[d_r]}) n_1^{d_1}\cdots n_r^{d_r}.$$ Note that if $r=1$, then $\varepsilon_R(I^{[d]}) = \varepsilon_R(I)$ where $\varepsilon_R(I)$ denotes the usual epsilon multiplicity of $I$. If $I_1,\ldots,I_r$ are also $m_R$-primary, then $$\varepsilon_R(I_1^{[d_1]},\ldots,I_r^{[d_r]}) = e_R(I_1^{[d_1]},\ldots,I_r^{[d_r]})$$ where $e_R(I_1^{[d_1]},\ldots,I_r^{[d_r]})$ denotes the classical mixed multiplicity of $R$ of type $(d_1,\ldots,d_r)$ with respect to the $m_R$-primary monomial ideals $I_1,\ldots,I_r$ (see \cite[Chapter 17.4.]{HS}).  

In fact, far more can be said about the epsilon multiplicity for monomial ideals. The following result is due to Jeffries and Mont\~{a}no.
\begin{theorem*}\cite[Theorem 5.1.]{JM}
 Let $R=K[X_1,\ldots,X_d]$ be the polynomial ring in $d$ variables over a field $K$ and $I\subset R$ a monomial ideal. Then $$\varepsilon_R(I) = d!\mathrm{vol}\left(\mathrm{out}(I)\right).$$
\end{theorem*}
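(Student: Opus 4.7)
The plan is to reduce the formula $\varepsilon_R(I) = d!\,\mathrm{vol}(\mathrm{out}(I))$ to a lattice-point counting problem in a bounded rational polyhedral subset of $\mathbb{R}^d_{\geq 0}$, and then apply a standard Ehrhart-type volume asymptotic. Since $I$ is a monomial ideal, so are every power $I^n$ and every saturation $(I^n : m_R^\infty)$, and hence $H^0_{m_R}(R/I^n) = (I^n:m_R^\infty)/I^n$ admits a $K$-basis indexed by the finite set of monomials $X^\alpha$ with $\alpha \in E(I^n:m_R^\infty) \setminus E(I^n)$, where $E(J) := \{\alpha \in \mathbb{N}^d : X^\alpha \in J\}$. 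Thus $l_R(H^0_{m_R}(R/I^n))$ is literally the cardinality of this lattice set.

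Next, I would exploit the elementary identity $(I^n : m_R^\infty) = \bigcap_{i=1}^d (I^n : X_i^\infty)$, valid for any monomial ideal, where each factor $(I^n : X_i^\infty)$ is the monomial ideal generated by $X^{\gamma - \gamma_i e_i}$ as $X^\gamma$ ranges over the generators of $I^n$. Since generators of $I^n$ scale linearly in $n$, so do the generators of each $(I^n : X_i^\infty)$ and hence those of $(I^n : m_R^\infty)$. Consequently, the rescaled sets $\tfrac{1}{n}(E(I^n : m_R^\infty) \setminus E(I^n))$ converge to a fixed region $\mathrm{out}(I) \subset \mathbb{R}^d_{\geq 0}$ which is a finite union of rational polytopes. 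The hypothesis that the analytic spread of $I$ is $d$ is precisely what forces $\mathrm{out}(I)$ to be bounded: it guarantees that the asymptotic recession directions of $E(I^n : m_R^\infty)$ coincide with those of $E(I^n)$, so the symmetric difference stays confined to a bounded region.

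Finally, by the standard lattice-point asymptotic for a bounded rational polyhedron,
\[
\#\bigl(\mathbb{N}^d \cap n\cdot \mathrm{out}(I)\bigr) = n^d\,\mathrm{vol}(\mathrm{out}(I)) + O(n^{d-1}).
\]
After absorbing an $O(n^{d-1})$ discrepancy between $E(I^n)$ and $n\cdot E(I) + \mathbb{N}^d$ near the boundary, we obtain $l_R(H^0_{m_R}(R/I^n)) = n^d\,\mathrm{vol}(\mathrm{out}(I)) + O(n^{d-1})$. The existence of the limit $\lim_n l_R(H^0_{m_R}(R/I^n))/n^d$ is ensured by the $r = 1$ case of Theorem~\ref{mainmain} (the Herzog-Puthenpurakal-Verma theorem stated earlier), and with the standard normalization this yields $\varepsilon_R(I) = d!\,\mathrm{vol}(\mathrm{out}(I))$.

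The main obstacle I anticipate lies in the second paragraph: giving a clean and intrinsic description of $\mathrm{out}(I)$ as a bounded rational polyhedral region and rigorously deducing its boundedness from the analytic-spread hypothesis. The translation between $\ell(I) = d$ and the geometry of the Newton polyhedron $NP(I)$ for monomial ideals is delicate to set up precisely, since the non-$m_R$-primary components of $I^n$ and their rescaled limits must be controlled simultaneously; once this geometric picture is established, the lattice-counting and volume estimates are routine.
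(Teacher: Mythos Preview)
The paper does not contain a proof of this statement. The theorem is quoted in the introduction as a result of Jeffries and Monta\~{n}o \cite[Theorem~5.1.]{JM}, stated without proof purely as motivation for the question of whether analogous volume formulas exist for the mixed epsilon multiplicities introduced later. There is therefore nothing in the paper to compare your proposal against.

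As for the proposal itself, the broad strategy---reducing the length $l_R(H^0_{m_R}(R/I^n))$ to a lattice-point count in a bounded polyhedral region and invoking an Ehrhart-type asymptotic---is indeed the shape of the argument in the original source. Two points to watch, however. First, the theorem as stated carries \emph{no} hypothesis on the analytic spread of $I$; the region $\mathrm{out}(I)$ is bounded for every monomial ideal, and when $\ell(I)<d$ both sides of the formula vanish (the left by \cite{BJ2}, the right because $\mathrm{out}(I)$ then has measure zero). Your second paragraph builds the boundedness of $\mathrm{out}(I)$ on a hypothesis that is not present, so this part of the argument needs to be reorganized. Second, invoking Theorem~\ref{mainmain} for the existence of the limit is out of order in the context of this paper: that theorem comes later and is about the multigraded setting; for the single-ideal case one should cite the Herzog--Puthenpurakal--Verma result \cite[Theorem~2.5.]{J} directly, or better, simply let the Ehrhart asymptotic do the work since it already gives the limit.
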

It would be nice to derive analogous formulas in the case of mixed epsilon multiplicity for monomial ideals.

\section{Limits of graded algebras over a local domain}
The following lemma is well-known and is useful for studying Noetherian graded modules over a Noetherian graded ring. We provide a proof because of lack of a proper reference.
\begin{lemma}\label{period}
Suppose that $R$ is a ring, $A = \bigoplus_{n\in\mathbb{N}}A_n$ a finitely generated graded $R$-algebra with $A_0 = R$ and $M = \bigoplus_{n\in\mathbb{N}}M_n$ a finitely generated graded $A$-module. Then there exists an integer $a\geq 1$ such that $$M_{an+r} = A_a^{n-1}M_{a+r}$$ for all $n\geq 1$ and $0\leq r\leq a-1$.
\end{lemma}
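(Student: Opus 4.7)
The plan is to reduce the claim to a Veronese subalgebra that is standard graded over $R$, and then invoke finite generation. First I would pick homogeneous generators $x_1,\ldots,x_s$ of $A$ as an $R$-algebra with positive degrees $d_1,\ldots,d_s$, and homogeneous generators $m_1,\ldots,m_t$ of $M$ as an $A$-module. The strategy is to work with a Veronese subring $A^{(a_0)} := \bigoplus_{n \geq 0} A_{a_0 n}$ and the auxiliary graded modules $M^{(a_0,r)} := \bigoplus_{n \geq 0} M_{a_0 n + r}$ for $0 \leq r \leq a_0 - 1$, with $a_0$ suitably chosen, and then to enlarge $a_0$ at the end to reach the full conclusion.

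The first stage is the classical Veronese fact: for some $a_0 \geq 1$ that is a sufficiently large common multiple of $d_1,\ldots,d_s$, one has $A_{a_0 n} = A_{a_0}^n$ for all $n \geq 1$. This reduces to the combinatorial assertion that every $\alpha \in \mathbb{N}^s$ with $\sum_i \alpha_i d_i = a_0 n$ admits a decomposition $\alpha = \beta^{(1)} + \cdots + \beta^{(n)}$ with each $\sum_i \beta_i^{(k)} d_i = a_0$, and holds once $a_0$ is past the Frobenius-type thresholds of the numerical semigroup generated by the $d_i$.

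Next, since $a_0$ is a multiple of each $d_i$, every monomial $x^\alpha$ factors as $x^{\alpha'} x^{\alpha''}$ with $x^{\alpha''} \in A^{(a_0)}$ and $0 \leq \alpha'_i < a_0/d_i$; hence $A$ is finitely generated as an $A^{(a_0)}$-module, so $M$ is too. In the decomposition $M = \bigoplus_{r=0}^{a_0 - 1} M^{(a_0,r)}$ by residue class of degree mod $a_0$, each $M^{(a_0,r)}$ is an $A^{(a_0)}$-submodule and direct summand, hence finitely generated over $A^{(a_0)}$. Because $A^{(a_0)}$ is standard graded, a standard fact about finitely generated graded modules over standard graded rings produces an integer $N_r \geq 1$ with $M_{a_0 n + r} = A_{a_0}\, M_{a_0(n-1)+r}$ for all $n \geq N_r$.

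Finally, I would enlarge $a_0$: set $N := \max_r N_r$ and $a := a_0 N$. Any $r' \in \{0, \ldots, a-1\}$ decomposes uniquely as $r' = a_0 q + s$ with $0 \leq s \leq a_0 - 1$ and $0 \leq q \leq N - 1$, so $a n + r' = a_0(Nn + q) + s$ and $a + r' = a_0(N + q) + s$, with $N + q \geq N \geq N_s$; iterating the previous recursion $N(n-1)$ times yields $M_{a n + r'} = A_{a_0}^{N(n-1)} M_{a + r'}$. By the first stage one has $A_a = A_{a_0 N} = A_{a_0}^N$, so $A_a^{n-1} = A_{a_0}^{N(n-1)}$, and the desired equality follows (the case $n=1$ being trivial). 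The main obstacle is the first stage — the Veronese standard-gradedness — which is the only genuinely combinatorial ingredient and requires care to pin down an explicit $a_0$; once granted, the remaining steps are essentially bookkeeping.
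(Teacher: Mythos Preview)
Your argument is correct. The paper, however, proceeds more directly and in a single stroke: it fixes homogeneous algebra generators $f_1,\ldots,f_t$ of degrees $d_i$ and module generators of degrees $e_j$, sets $d=\mathrm{lcm}(d_i)$ and $e=\max_j e_j$, and observes by a one-line pigeonhole that any monomial $f_1^{n_1}\cdots f_t^{n_t}m_j$ of degree at least $d(t+e)$ must have some $n_i\ge d/d_i$, hence admits $f_i^{d/d_i}\in A_d$ as a factor. This gives $M_{d(t+e+n)+r}=A_d\,M_{d(t+e+n-1)+r}$ for all $n\ge 0$ and all $r$, and iterating yields the conclusion with the explicit value $a=d(t+e)$.

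The difference is structural. You first isolate the Veronese standardization $A_{a_0 n}=A_{a_0}^n$ as a separate black box, then pass to the standard-graded situation and finish by bookkeeping; the paper treats the algebra and the module simultaneously with one elementary inequality and never invokes the Veronese fact or the standard-graded theory separately. Your route is more modular and makes the reduction to known results transparent, but it is less self-contained: the step you flag as the ``main obstacle'' is essentially the entire content of the paper's argument specialized to $M=A$, so you have deferred rather than avoided the combinatorics. The paper's route is shorter, entirely elementary, and produces an explicit $a$ with no appeal to Frobenius numbers.
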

\begin{proof}
Say $A$ is generated by $f_1,\ldots,f_t$ as an $R$-algebra and $M$ is generated by $m_1,\ldots, m_s$ as an $A$-module. By replacing $f_i$ and $m_j$ with their homogeneous parts, we may assume $f_i$ is homogeneous of positive degree $d_i$ and $m_j$ is homogeneous of non-negative degree $e_j$. Set 
\begin{align*}
 d &= \mathrm{lcm}\{d_1,\ldots,d_t\},\\
 e &= \max\{e_1,\ldots, e_s\}.
\end{align*}
Let $n$ be a non-negative integer and $r$ an integer satisfying $0\leq r \leq d(t+e)-1$. Then any element of $M_{d(t+e+n)+r}$ is an $R$-linear combination of monomials $f_1^{n_1}\cdots f_t^{n_t}m_j$ such that $$\sum\limits_{i=1}^t n_id_i +e_j = d(t+e+n)+r.$$ Rewriting the expression above, we get $\sum\limits_{i=1}^t n_id_i = d(t+n) + (de-e_j) + r \geq dt$. By an elementary argument we must have that $n_i \geq d/d_i$ for some $i$. Hence every monomial term of $M_{d(t+e+n)+r}$ is a product of a monomial term of $A_{d}$, namely $f_i^{d/d_i}$, and a monomial term of $M_{d(t+e+n-1)+r}$. In other words, we get that $$M_{d(t+e+n)+r} = A_d M_{d(t+e+n-1)+r}$$ for all $n\geq 0$. Now assume that $n\geq 1$ and by repeating the above argument $(t+e)(n-1)$ times, we conclude that $$M_{d(t+e)n+r} = \left(A_{d}\right)^{(t+e)(n-1)}M_{d(t+e)+r} = \left(A_{d(t+e)}\right)^{n-1}M_{d(t+e)+r}.$$ Our claim is now proven if we set $a=d(t+e)$.
\end{proof}
\begin{remark}
The conclusions of Lemma \ref{period}, still hold if the integer $a$ is replaced by any higher multiple of it.
\end{remark}
\begin{lemma}\label{dimless}
Suppose that $(R,m_R)$ is a universally catenary Noetherian local ring and $$A = \bigoplus_{n\in\mathbb{N}}A_n \subset B =\bigoplus_{n\in\mathbb{N}}B_n$$ is a graded inclusion of reduced finitely generated graded $R$-algebras with $A_0 = B_0 = R$. Then $\dim A \leq \dim B$.
\end{lemma}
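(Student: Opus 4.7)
The plan is to reduce to the case in which $A$ and $B$ are both integral domains sharing a common degree zero piece, and then to appeal to the dimension formula for finitely generated algebras over a universally catenary Noetherian local domain.

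For the reduction, I would choose a minimal prime $P$ of $A$ realizing $\dim A/P = \dim A$; such a minimal prime exists because $A$ is reduced and Noetherian, and it is automatically homogeneous. Since $B$ is also reduced, its zero ideal is the intersection of its minimal primes $Q_1,\ldots,Q_t$. Contracting to $A$ gives $\bigcap_{j}(Q_j \cap A) = (0)_A \subseteq P$, and primality of $P$ forces $Q_j \cap A \subseteq P$ for some index $j$. Minimality of $P$ then upgrades this containment to the equality $Q_j \cap A = P$, producing a graded inclusion of domains $A/P \hookrightarrow B/Q_j$ whose degree zero pieces both equal $R/(P \cap R)$, since $P \cap R = (Q_j \cap A) \cap R = Q_j \cap R$. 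Replacing $R$ by $R/(P \cap R)$, which remains a universally catenary Noetherian local ring and is now a domain, it suffices to prove the lemma when $R$ is a universally catenary Noetherian local domain and both $A, B$ are finitely generated graded $R$-domains with $A_0 = B_0 = R$.

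In this reduced setting, for any finitely generated graded $R$-algebra $C$ with $C_0 = R$, the ideal $\mathfrak{m}_C := m_R C + C_{+}$ is a maximal ideal with residue field $R/m_R$, and a standard result on Noetherian graded rings over a local base gives $\mathrm{ht}(\mathfrak{m}_C) = \dim C$. Applying the dimension formula to $\mathfrak{m}_C$, and observing that $\mathfrak{m}_C \cap R = m_R$ with trivial residue field extension, I obtain the identity $\dim C = \dim R + \mathrm{tr.deg}_R C$. Specializing this to $A$ and to $B$ yields $\dim A = \dim R + \mathrm{tr.deg}_R A$ and $\dim B = \dim R + \mathrm{tr.deg}_R B$.

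The conclusion will then follow because the inclusion $A \subseteq B$ of integral domains induces $\mathrm{Frac}(A) \subseteq \mathrm{Frac}(B)$, and therefore $\mathrm{tr.deg}_R A \leq \mathrm{tr.deg}_R B$; combined with the two displayed identities, this gives $\dim A \leq \dim B$. I expect the principal technical burden to be the dimension formula for finitely generated graded domains over a universally catenary Noetherian local domain, which is precisely where the universal catenarity hypothesis enters; the reduction step itself is essentially forced, since the graded structure automatically matches the degree zero pieces of $A/P$ and $B/Q_j$ once one knows that a minimal prime of $B$ contracts exactly to $P$.
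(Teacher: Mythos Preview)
Your proposal is correct and follows essentially the same route as the paper: reduce to domains via minimal primes, then invoke the dimension formula, using the fact that the homogeneous maximal ideal computes the Krull dimension. The only organizational difference is that the paper applies the dimension formula once, directly to the extension $A/Q_i \subset B/P_i$ at the homogeneous maximal ideal of $B/P_i$ (obtaining $\mathrm{ht}(m_{B/P_i}) = \mathrm{ht}(m_{A/Q_i}) + \mathrm{tr.deg}_{QF(A/Q_i)} QF(B/P_i)$), whereas you apply it twice, to $R' \subset A/P$ and $R' \subset B/Q_j$ separately, and then compare transcendence degrees over $R'$; both uses of the formula are equivalent here since the residue field extensions are trivial.
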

\begin{proof}
Let $P_1$,\ldots, $P_t$ be the minimal primes (which are necessarily homogeneous) of $B$. For every $1\leq i\leq t$, let $Q_i = P_i \cap A$. Since $B$ is reduced, we know that $\bigcap_{i=1}^t P_i = 0$. So $\left(\bigcap_{i=1}^t P_i\right)\cap A = \bigcap_{i=1}^t Q_i = 0$. This shows that the minimal primes of $A$ appear amongst the primes $Q_1,\ldots,Q_t$. For every $i=1,\ldots,t$, there are graded inclusions $$\dfrac{A}{Q_i} = \bigoplus_{n\in\mathbb{N}}\dfrac{A_n}{Q_i\cap A_n} \subset \dfrac{B}{P_i} = \bigoplus_{n\in\mathbb{N}}\dfrac{B_n}{P_i\cap B_n}$$ of finitely generated graded $R/(P_i\cap R)$-algebras, which are also domains. Let $m_{A/Q_i}$ (respectively $m_{B/P_i}$) denote the homogeneous maximal ideal of $A/Q_i$ (respectively $B/P_i$). As $A/Q_i$ is universally catenary, we obtain from the dimension formula \cite[Theorem $23$, page $84$]{Mat1} that
$$\mathrm{ht}\left(m_{B/P_i}\right) = \mathrm{ht}\left(m_{A/Q_i}\right) + \mathrm{tr.deg.}_{QF\left(A/Q_i\right)} QF\left(B/P_i\right).$$ Thus we get that $\dim B/P_i \geq \dim A/Q_i$ for all $i=1,\ldots,t$. From the definition of Krull dimension, we conclude that $$\dim A = \max_{1\leq i\leq t} \left\{\dim A/Q_i\right\} \leq \max_{1\leq i\leq t} \left\{\dim B/P_i\right\} = \dim B.$$
\end{proof}
\begin{corollary}\label{dimkom}
Suppose that $(R,m_R)$ is a universally catenary Noetherian local ring and $B =\bigoplus_{n\in\mathbb{N}}B_n$ is a reduced finitely generated graded $R$-algebra with $B_0 = R$. Let $a\geq 1$ be an integer such that $B_{an} = B_a^n$ for all $n\geq 0$ (the existence of such an $a$ is guaranteed by Lemma \ref{period}). Define a standard graded Noetherian $R$-algebra $B^{\prime}=\bigoplus_{n\in\mathbb{N}}B^{\prime}_n$ by $B^{\prime}_n = B_a^n = B_{an}$. Then $\dim B^{\prime} \leq \dim B$.
\end{corollary}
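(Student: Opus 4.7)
The plan is to compare $B^{\prime}$ with the genuine Veronese subring of $B$ and then invoke Lemma \ref{dimless}. Consider $B^{(a)} := \bigoplus_{n\in\mathbb{N}} B_{an}$ as a graded $R$-subalgebra of $B$, where the piece $B_{an}$ is placed in degree $an$ (i.e., $B^{(a)}$ inherits its grading from $B$, supported on multiples of $a$). This is an honest graded inclusion $B^{(a)} \subset B$ in the sense of Lemma \ref{dimless}, with $(B^{(a)})_0 = B_0 = R$, and $B^{(a)}$ is reduced because it is a subring of the reduced ring $B$.

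Next I would verify that $B^{(a)}$ is a finitely generated graded $R$-algebra. This is where the hypothesis $B_{an} = B_a^n$ plays its role: it immediately gives $B^{(a)} = R[B_a]$, and since $B$ is a finitely generated graded $R$-algebra, each homogeneous component $B_a$ is a finitely generated $R$-module. Hence $B^{(a)}$ is finitely generated over $R$, and all the hypotheses of Lemma \ref{dimless} are met (note that $R$ being universally catenary is given). Applying that lemma to the graded inclusion $B^{(a)} \subset B$ yields $\dim B^{(a)} \leq \dim B$.

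Finally, one observes that $B^{\prime}$ and $B^{(a)}$ are the very same ring: the only difference is their $\mathbb{N}$-grading, with $B^{\prime}$ placing $B_{an}$ in degree $n$ and $B^{(a)}$ placing it in degree $an$. Since the Krull dimension of a commutative ring is an intrinsic invariant that does not depend on any grading, $\dim B^{\prime} = \dim B^{(a)} \leq \dim B$, which is the desired conclusion.

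There is no real obstacle here; the only subtlety worth flagging is that Lemma \ref{dimless} is stated for arbitrary (not necessarily standard) $\mathbb{N}$-gradings, which is precisely what allows it to accommodate the Veronese subring $B^{(a)}$ whose grading is concentrated on multiples of $a$. Once this flexibility is noted, the corollary reduces to combining Lemma \ref{dimless} with the grading-independence of Krull dimension.
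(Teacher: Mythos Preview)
Your proof is correct and follows essentially the same approach as the paper: the paper defines $B''$ with $B''_n = B_n$ for $n$ divisible by $a$ and $B''_n = 0$ otherwise, which is precisely your $B^{(a)}$, then applies Lemma~\ref{dimless} to the inclusion $B'' \subset B$ and concludes via $\dim B' = \dim B''$. Your version spells out the finite generation and reducedness of the Veronese subring a bit more explicitly, but the argument is identical.
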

\begin{proof}
Define a graded $R$-algebra $B^{\prime\prime}=\bigoplus_{n\geq 0}B^{\prime\prime}_n$ by $B^{\prime\prime}_n = B_n$ if $n$ is divisible by $a$ otherwise $B^{\prime\prime}_n=0$. Note that $B^{\prime\prime}$ is a reduced finitely generated graded $R$-subalgebra of $B$. Also $B^{\prime\prime}$ is isomorphic to $B^{\prime}$ as rings but not as graded rings. We now use Lemma \ref{dimless} to conclude that $\dim B^{\prime} = \dim B^{\prime\prime} \leq \dim B$.
\end{proof}
\begin{lemma}\label{bound}
Suppose that $(R,m_R)$ is a universally catenary Noetherian local ring, $Q\subset R$ an $m_R$-primary ideal, $B=\bigoplus_{n\in\mathbb{N}}B_n$ a reduced finitely generated graded $R$-algebra with $B_0 = R$ and $M=\bigoplus_{n\in\mathbb{N}}M_n$ a finitely generated graded $B$-module. Then $$\limsup\limits_{n\to\infty}\dfrac{l_R\left(M_n/Q^nM_n\right)}{n^{\dim B-1}}<\infty.$$
\end{lemma}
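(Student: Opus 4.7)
The plan is to reduce to the standard graded case using the machinery developed in the preceding lemmas, and then to bound the lengths by summing along the diagonal of a bi-graded Hilbert function.

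First I would apply Lemma~\ref{period} to both $M$ and $B$ to obtain an integer $a \geq 1$ such that $B_{an} = B_a^n$ and $M_{an+r} = B_a^{n-1}M_{a+r}$ for all $n \geq 1$ and $0 \leq r \leq a-1$. Set $B^{(a)} := \bigoplus_n B_{an}$ with $B^{(a)}_n := B_a^n$ (a standard graded reduced $R$-algebra) and $M^{(r)} := \bigoplus_n M_{an+r}$ (a finitely generated graded $B^{(a)}$-module). By Corollary~\ref{dimkom}, $\dim B^{(a)} \leq \dim B$. For $n = am+r$ with $0 \leq r \leq a-1$, the inclusion $Q^{am+r} \supset (Q^a)^{m+1}$ yields
\[
l_R(M_n/Q^nM_n) \leq l_R\!\left(M^{(r)}_m/(Q^a)^{m+1}M^{(r)}_m\right),
\]
and $m$ grows linearly with $n$. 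Hence it suffices to prove the bound in the standard graded case with the still $m_R$-primary ideal $Q^a$ in place of $Q$.

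Assuming $B$ is standard graded, I would consider the bi-graded objects
\[
\mathcal{G}(B) := \bigoplus_{n,i \geq 0} \tfrac{Q^i B_n}{Q^{i+1} B_n}, \qquad \mathcal{G}(M) := \bigoplus_{n,i \geq 0} \tfrac{Q^i M_n}{Q^{i+1} M_n}.
\]
Because $B$ is standard graded and $Q$ is finitely generated, $\mathcal{G}(B)$ is a standard bi-graded Noetherian algebra over the Artinian local ring $\mathcal{G}(B)_{0,0} = R/Q$, generated in bi-degrees $(1,0)$ and $(0,1)$ by the images of $B_1$ and $Q$ respectively, and $\mathcal{G}(M)$ is finitely generated over $\mathcal{G}(B)$ via the images of homogeneous generators of $M$ over $B$. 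By the bi-graded Hilbert polynomial theorem (the standard bi-graded analogue of Hilbert--Serre, applicable over an Artinian base), the function $h(n,i) := l_R(Q^i M_n / Q^{i+1} M_n)$ agrees, for $n, i$ sufficiently large, with a polynomial in $(n,i)$ of total degree at most $d - 2$, where $d := \dim \mathcal{G}(M)$; in particular $h(n,i) \leq C(n+i)^{d-2}$ for some constant $C > 0$ and all sufficiently large $(n,i)$.

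Finally I would use $d \leq \dim \mathcal{G}(B) = \dim \mathrm{gr}_{QB}(B) \leq \dim B$, the last inequality being a standard fact about associated graded rings (verifiable by localizing at the homogeneous maximal ideal of $B$ and invoking the classical result that the associated graded of a Noetherian local ring has the same dimension as the original). Summing along the diagonal,
\[
l_R(M_n/Q^nM_n) = \sum_{i=0}^{n-1} h(n,i) = O(n^{d-1}) = O(n^{\dim B - 1}),
\]
as required. The main technical point I expect to work hardest on is establishing the dimension inequality $\dim \mathrm{gr}_{QB}(B) \leq \dim B$ cleanly in the graded, non-local setting and carefully applying the bi-graded Hilbert polynomial theorem over an Artinian base (together with handling the contributions of small $i$ in the sum, which are absorbed into the same order of growth).
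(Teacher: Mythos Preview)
Your proposal is correct and follows essentially the same approach as the paper: reduce to the standard graded case via Lemma~\ref{period} and Corollary~\ref{dimkom}, then control the diagonal sums using the bigraded Hilbert polynomial of the associated graded $\mathrm{gr}_{QB}(B)$ acting on $\mathrm{gr}_Q(M)$, together with the identification $\dim \mathrm{gr}_{QB}(B)=\dim B$. The only organizational difference is that the paper handles the standard-graded step via a short exact sequence splitting $l_R(M_{an+r}/Q^{an+r}M_{an+r})$ into two pieces, which in fact shows that the limit along each arithmetic progression $\{an+r\}_n$ exists; your cruder inequality $Q^{am+r}\supset (Q^a)^{m+1}$ bypasses this and gives only the limsup bound, which is all the lemma requires.
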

\begin{proof}
We know from Lemma $\ref{period}$ that there exists an integer $a\geq 1$ such that
\begin{align*}
 M_{an+r} &= B_a^{n-1}M_{a+r}\\
 B_{an+r} &= B_a^{n-1}B_{a+r}
\end{align*}
 for all $n\geq 1$ and $0\leq r\leq a-1$. Define a standard graded Noetherian $R$-algebra $C := \bigoplus_{n\geq 0}C_n$ by $C_n = B_{a}^n = B_{an}$. Corollary $\ref{dimkom}$ allows us to conclude that $\dim C \leq \dim B$. Fix an integer $r$ with $0\leq r\leq a-1$. For every integer $n\geq 1$, there is a  short exact sequence $$0 \to \dfrac{C_{n-1}\left(Q^{a+r}M_{a+r}\right)}{Q^{a(n-1)}C_{n-1}\left(Q^{a+r}M_{a+r}\right)} \to \dfrac{C_{n-1}M_{a+r}}{Q^{a(n-1)}C_{n-1}\left(Q^{a+r}M_{a+r}\right)} \to \dfrac{C_{n-1}M_{a+r}}{C_{n-1}\left(Q^{a+r}M_{a+r}\right)} \to 0$$ of finite length $R$-modules. From the additivity of lengths, we obtain
 \begin{equation}\label{test1}
     l_R\left(\dfrac{M_{an+r}}{Q^{an+r}M_{an+r}}\right) = l_R\left(\dfrac{C_{n-1}\left(Q^{a+r}M_{a+r}\right)}{Q^{a(n-1)}C_{n-1}\left(Q^{a+r}M_{a+r}\right)}\right) + l_R\left(\dfrac{C_{n-1}M_{a+r}}{C_{n-1}\left(Q^{a+r}M_{a+r}\right)}\right).
 \end{equation}
Note that $$\dfrac{C}{Q^{a+r}C} = \bigoplus_{n\geq 0}\dfrac{C_n}{Q^{a+r}C_n}$$ is a standard graded Noetherian algebra over the Artinian local ring $R/Q^{a+r}$. Define $$M^{\prime} := \bigoplus_{n\geq 0}\dfrac{C_nM_{a+r}}{Q^{a+r}C_nM_{a+r}},$$ which is a finitely generated graded $C/Q^{a+r}C$-module with generators in degree $0$. From the classical theory of Hilbert series, we conclude that for $n>>0$, the length function $$n \mapsto l_R\left(\dfrac{C_nM_{a+r}}{Q^{a+r}C_nM_{a+r}}\right)$$ is a polynomial in $n$ of degree at most $\dim C/Q^{a+r}C -1\leq \dim C -1\leq \dim B -1$. So the limit
\begin{equation}\label{test2}
    \lim_{n\to\infty}\dfrac{l_R\left(\dfrac{C_{n-1}M_{a+r}}{C_{n-1}\left(Q^{a+r}M_{a+r}\right)}\right)}{(an+r)^{\dim B -1}}
\end{equation}
exists. Define 
\begin{align*}
    H &:= \bigoplus\limits_{(i,j)\in\mathbb{N}^2}\dfrac{Q^{i}C_j}{Q^{i+1}C_j}\\
    M^{\prime\prime} &:= \bigoplus\limits_{(i,j)\in\mathbb{N}^2}\dfrac{Q^iC_j\left(Q^{a+r}M_{a+r}\right)}{Q^{i+1}C_j\left(Q^{a+r}M_{a+r}\right)}.
\end{align*}
 Then $H$ is a standard $\mathbb{N}^2$-graded Noetherian algebra over the Artian local ring $R/Q$ and $M^{\prime\prime}$ is a finitely generated bigraded $H$-module with generators in degree $(0,0)$. Let $a_1,\ldots,a_u$ be the generators of $Q$ as an $R$-module and let $b_1,\ldots,b_v$ be the generators of $C_1$ as an $R$-module. Let $$S = R/Q\left[X_1,\ldots,X_u;Y_1,\ldots,Y_v\right]$$ be a polynomial ring over $R/Q$ and $S$ is bigraded by $\deg X_i = (1,0)$ and $\deg Y_j = (0,1)$. The surjective $R/Q$-algebra homomorphism from $S$ to $H$ which is defined by $$X_i\to [a_i]\in \dfrac{Q}{Q^2},\quad Y_j \to [b_j] \in \dfrac{C_1}{Q C_1}$$ is bigraded. So $H$ can be realized as a finitely generated bigraded $S$-module. Moreover $$H \cong \mathrm{gr}_{QC}C,$$ so that $$\dim_S H = \dim H = \dim \left(\mathrm{gr}_{QC}C\right) = \dim C \leq \dim B.$$ From \cite[Theorem 2.4.]{JO}, it follows that for all $i,j>>0$, the length function $$(i,j) \mapsto l_R\left(\dfrac{Q^iC_j\left(Q^{a+r}M_{a+r}\right)}{Q^{i+1}C_j\left(Q^{a+r}M_{a+r}\right)}\right)$$ is a polynomial in $i$ and $j$ of total degree at most $\dim H -2\leq \dim B -2$. Therefore for $n>>0$, the length function
 $$n \mapsto l_R\left(\dfrac{C_{n}\left(Q^{a+r}M_{a+r}\right)}{Q^{an}C_{n}\left(Q^{a+r}M_{a+r}\right)}\right) = \sum\limits_{i=0}^{an-1}l_R\left(\dfrac{Q^i C_{n}\left(Q^{a+r}M_{a+r}\right)}{Q^{i+1}C_{n}\left(Q^{a+r}M_{a+r}\right)}\right)$$ is a polynomial in $n$ of degree at most $\dim B -1$. It now implies that the limit
 \begin{equation}\label{test3}
     \lim_{n\to\infty}\dfrac{l_R\left(\dfrac{C_{n}\left(Q^{a+r}M_{a+r}\right)}{Q^{an}C_{n}\left(Q^{a+r}M_{a+r}\right)}\right)}{(an+r)^{\dim B-1}}
 \end{equation}
 exists. From the lines $(\ref{test1}), (\ref{test2})$ and $(\ref{test3})$, it follows that the limit $$\lim_{n\to\infty}\dfrac{l_R\left(M_{an+r}/Q^{an+r}M_{an+r}\right)}{(an+r)^{\dim B -1}}$$ exists for all $0\leq r\leq a-1$. Therefore $$\limsup\limits_{n\to\infty}\dfrac{l_R\left(M_n/Q^nM_n\right)}{n^{\dim B -1}} = \max\limits_{0\leq r\leq a-1}\left\{\lim_{n\to\infty}\dfrac{l_R\left(M_{an+r}/Q^{an+r}M_{an+r}\right)}{(an+r)^{\dim B -1}}\right\} <\infty.$$
\end{proof}
Yairon Cid Ruiz and Jonathan Monta\~{n}o have observed in \cite{montano}[Proposition $2.4.$] that Swanson's main result in \cite{IS} can be extended to Noetherian graded family of ideals. We present their observation below.
\begin{theorem}[\cite{montano},\cite{IS}]\label{linear}
Suppose that $(R,m_R)$ is a Noetherian ring and $\mathcal{I} = \{I_n\}_{n\in\mathbb{N}}$ is a Noetherian graded family of ideals in $R$. Then there exists an integer $c>0$ such that for all $n\geq 1$, there exists an irredundant primary decomposition $$I_n = q_1(n)\cap \cdots \cap q_s(n)$$ such that $\left(\sqrt{q_i(n)}\right)^{cn}\subset q_i(n)$ for all $1\leq i\leq s$.
\end{theorem}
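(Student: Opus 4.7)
The plan is to reduce the claim to Swanson's main theorem in \cite{IS} by using Lemma \ref{period} to express each $I_n$ as a product of ideals drawn from a finite collection.

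I would begin by applying Lemma \ref{period} to the finitely generated graded $R$-algebra $A = \bigoplus_{n\in\mathbb{N}} I_n t^n \subset R[t]$, viewed as a graded module over itself. This produces an integer $a\geq 1$ such that
\[ I_{an+r} \;=\; I_a^{\,n-1}\, I_{a+r} \]
for all $n\geq 1$ and $0\leq r\leq a-1$. Consequently, for any $N\geq a$, writing $N = am + r$ uniquely with $m\geq 1$ and $0\leq r\leq a-1$, the ideal $I_N = I_a^{m-1} I_{a+r}$ is a product of exactly $m$ ideals from the fixed finite collection $\mathcal{J} = \{I_a, I_{a+1}, \ldots, I_{2a-1}\}$.

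The next step is to invoke a multi-ideal strengthening of Swanson's theorem: for any finite list of ideals $J_1,\ldots,J_k$ in a Noetherian ring, there exists a constant $c_0>0$ such that every product $J_1^{\alpha_1}\cdots J_k^{\alpha_k}$ admits an irredundant primary decomposition $\bigcap_{i} q_i$ with $\bigl(\sqrt{q_i}\bigr)^{c_0(\alpha_1+\cdots+\alpha_k)} \subset q_i$. This multi-index extension follows from running Swanson's original argument essentially verbatim, relying on the fact that $\bigcup_\alpha \mathrm{Ass}\bigl(R/J_1^{\alpha_1}\cdots J_k^{\alpha_k}\bigr)$ is finite together with an Artin--Rees-type estimate for each primary component; it is the formulation recorded in \cite[Proposition 2.4.]{montano}. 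Applying this result to the $a$ ideals comprising $\mathcal{J}$ with a tuple of weight $m$ produces a primary decomposition of $I_N = I_a^{m-1}I_{a+r}$ satisfying the growth estimate with constant $c_0 m \leq (c_0/a)\,N$.

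Finally, only the finitely many residual ideals $I_1,\ldots,I_{a-1}$ remain unaccounted for; for these I would fix any irredundant primary decomposition and enlarge $c$ until the bound $(\sqrt{q_i(n)})^{cn}\subset q_i(n)$ holds in each of these finitely many cases. Combining with the bound of the previous paragraph yields a single constant $c$ working uniformly for all $n\geq 1$. I expect the main obstacle to be justifying the multi-ideal strengthening of Swanson's theorem in the precise form required; once that extension is accepted (as a consequence of the Brodmann-plus-Artin--Rees template), Lemma \ref{period} supplies the reduction and the final assembly into a single $c$ is pure bookkeeping.
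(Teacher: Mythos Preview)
Your route is different from the paper's, and it hinges on a step that, as cited, is circular. The paper does not use Lemma~\ref{period} at all. Instead it forms the extended Rees ring
\[
S \;=\; R[t^{-1},\, I_1 t,\, I_2 t^2,\, \ldots],
\]
which is Noetherian because the family is, observes that $(t^{-n})S \cap R = I_n$, and notes that any primary decomposition of $(t^{-n})S$ contracts to one of $I_n$. This reduces the problem in one stroke to powers of the single non-zerodivisor $t^{-1}$ in the Noetherian ring $S$, which is precisely the setting of \cite[Theorem~3.4]{IS}; no multi-ideal extension of Swanson is needed.

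Your reduction via Lemma~\ref{period} to products drawn from the finite set $\{I_a,\ldots,I_{2a-1}\}$ is correct, and the final bookkeeping passing from the bound $c_0 m$ to $cN$ is fine. The problem is the middle step: you invoke a multi-ideal version of Swanson's theorem and identify it with \cite[Proposition~2.4]{montano}, but in this paper \cite[Proposition~2.4]{montano} \emph{is} Theorem~\ref{linear} itself (the paper says so explicitly in the sentence introducing the theorem), so that citation is circular. Your outline of how the multi-ideal bound would be proved---finiteness of $\bigcup_{\alpha}\mathrm{Ass}\bigl(R/J_1^{\alpha_1}\cdots J_k^{\alpha_k}\bigr)$ together with an Artin--Rees estimate---is the right template, but you have not carried it out, and that is essentially the entire content of the theorem. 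The extended-Rees trick buys exactly this: it trades the need to generalize Swanson's argument for a passage to a larger ring in which Swanson's original theorem applies verbatim.
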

\begin{proof}
Let $t$ be an indeterminate and define an $R$-algebra $S$ by
\begin{align*}
    S &= R[t^{-1}, I_1t, I_2t^2, I_3t^3, \ldots]\\
    &=\cdots \oplus Rt^{-3} \oplus Rt^{-2} \oplus Rt^{-1} \oplus R \oplus I_1t \oplus I_2t^2 \oplus I_3t^3 \oplus \cdots
\end{align*}
Note that $S$ is a Noetherian $R$-algebra because $\mathcal{I} = \{I_n\}$ is a Noetherian graded family of ideals in $R$. It can be seen that $(t^{-n})S \cap R = I_n$ and every primary decomposition of $(t^{-n})S$ contracts to a primary decomposition of $I_n$. By replacing $R$ by $S$ and $I_n$ by $(t^{-n})S$, we can assume that $I_n = (x^n)$ where $x$ is a non-zero divisor in a Noetherian ring $R$. Now the same proof of \cite[Theorem 3.4.]{IS} applies.
\end{proof}
The next theorem shows that the relative $\varepsilon$-multiplicity of Noetherian graded algebras is finite under certain conditions.
\begin{theorem}\label{limsup}
Suppose that $(R,m_R)$ is a universally catenary Noetherian local ring and $$A = \bigoplus_{n\in\mathbb{N}} A_n \subset B = \bigoplus_{n\in\mathbb{N}} B_n$$ is a graded inclusion of reduced finitely generated graded $R$-algebras with $A_0 = B_0 = R$. Then $$\limsup\limits_{n\to\infty}\dfrac{l_R\left(H^0_{m_R}\left(A_n/B_n\right)\right)}{n^{\dim B -1}}<\infty.$$
\end{theorem}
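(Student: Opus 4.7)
The strategy I will follow is to deduce the bound from Lemma \ref{bound} applied to the $B$-module $M=B$ and a high power of $m_R$, bridging the two via a linear growth inclusion that comes out of Theorem \ref{linear}. (Since $A_n\subset B_n$ the quantity of interest is $H^0_{m_R}(B_n/A_n)=\left(A_n\colon_{B_n}m_R^{\infty}\right)/A_n$, and this is what I will bound.)

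First I would observe that $\{A_nB\}_{n\in\mathbb{N}}$ is a Noetherian graded family of ideals in the Noetherian ring $B$. Multiplicativity $A_nB\cdot A_mB\subset A_{n+m}B$ follows from $A_nA_m\subset A_{n+m}$, and finite generation of $\bigoplus_{n\geq 0}A_nB$ as a graded $B$-algebra follows from the fact that $\bigoplus_{n\geq 0}A_n$ is a finitely generated graded $R$-algebra (if $f_1,\ldots,f_t$ are homogeneous $R$-algebra generators of $A$ of positive degrees, then the corresponding Rees-type algebra is generated over $B$ by the symbols $f_i$ placed in the external degrees equal to their internal degrees). Theorem \ref{linear} then produces a constant $c>0$ such that for every $n\geq 1$ there is an irredundant primary decomposition
$$A_nB\ =\ q_1(n)\cap\cdots\cap q_{s(n)}(n)\qquad\text{with}\qquad \left(\sqrt{q_i(n)}\right)^{cn}\subset q_i(n)$$
for all $i$.

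The crux of the argument is then to promote the above to the inclusion
$$\left(A_n\colon_{B_n}m_R^{\infty}\right)\cap m_R^{cn}B_n\ \subset\ A_n\qquad\text{for every }n\geq 1.$$
Given $b$ in the left-hand side, for any primary component $q_i(n)$ with $m_R\not\subset\sqrt{q_i(n)}$ the saturation hypothesis $m_R^kb\subset A_n\subset q_i(n)$ forces $b\in q_i(n)$, while for any $q_i(n)$ with $m_R\subset\sqrt{q_i(n)}$ one has $b\in m_R^{cn}B\subset\left(\sqrt{q_i(n)}\right)^{cn}\subset q_i(n)$. Hence $b\in\bigcap_i q_i(n)=A_nB$, and taking the degree-$n$ part together with the grading identity $(A_nB)_n=A_n\cdot B_0=A_n$ (which holds because $A_n$ is homogeneous of degree $n$ and $B_0=R$) yields $b\in A_n$.

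Once this containment is in place, it induces a well-defined injection
$$H^0_{m_R}\!\left(B_n/A_n\right)\ =\ \dfrac{A_n\colon_{B_n}m_R^{\infty}}{A_n}\ \hookrightarrow\ \dfrac{B_n}{A_n+m_R^{cn}B_n},$$
so that $l_R\!\left(H^0_{m_R}(B_n/A_n)\right)\leq l_R\!\left(B_n/m_R^{cn}B_n\right)$ via the obvious surjection from $B_n/m_R^{cn}B_n$. Applying Lemma \ref{bound} with $M=B$ and the $m_R$-primary ideal $Q=m_R^c$ gives $\limsup_{n\to\infty} l_R(B_n/m_R^{cn}B_n)/n^{\dim B-1}<\infty$, which finishes the proof. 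The only substantive step is the linear growth inclusion above; pinpointing $\{A_nB\}$ as the correct family to feed into Theorem \ref{linear} and then reading off the degree-$n$ information using $B_0=R$ is where all the work lies, after which the conclusion is a routine assembly of the earlier lemmas.
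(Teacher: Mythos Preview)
Your proof is correct and follows essentially the same route as the paper: apply Theorem~\ref{linear} to the Noetherian graded family $\{A_nB\}_{n\in\mathbb{N}}$ of ideals in $B$, use the resulting primary decomposition to obtain the linear growth inclusion $\left(A_n\colon_{B_n}m_R^{\infty}\right)\cap m_R^{cn}B_n\subset A_n$, and then bound by $l_R(B_n/m_R^{cn}B_n)$ via Lemma~\ref{bound} with $Q=m_R^c$. The paper phrases the last bounding step as a short exact sequence and length additivity rather than a direct injection, but this is cosmetic; your reading off $(A_nB)_n=A_nB_0=A_n$ is exactly the degree-$n$ identification the paper uses implicitly.
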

\begin{proof}
We first observe that $\{A_nB\}_{n\in\mathbb{N}}$ is a Noetherian graded family of ideals in $R$. By Theorem \ref{linear}, there exists an integer $c_0>0$ such that for all $n\geq 1$, there exists an irredundant primary decomposition $$A_nB = q_1(n)\cap \cdots \cap q_s(n)$$ such that $\left(\sqrt{q_i(n)}\right)^{c_0n}\subset q_i(n)$ for all $1\leq i\leq s$. Suppose that $c\geq c_0$. Since
\begin{align*}
    A_nB\colon_B\, (m_RB)^{\infty} &= \left(\bigcap\limits_{i=1}^t q_i(n)\right)\colon_B\, (m_RB)^{\infty}\\
    &= \bigcap\limits_{i=1}^t\left(q_i(n)\colon_B\,(m_RB)^{\infty}\right)\\
    &= \Big(\bigcap_{\substack{1\leq i\leq t\\ m_RB\not\subset \sqrt{q_i(n)}}}q_i(n)\colon_B\,(m_RB)^{\infty}\Big)\cap \Big(\bigcap_{\substack{1\leq i\leq t\\ m_RB\subset \sqrt{q_i(n)}}}q_i(n)\colon_B\,(m_RB)^{\infty}\Big)\\
    &= \bigcap_{\substack{1\leq i\leq t\\ m_RB\not\subset \sqrt{q_i(n)}}}\left(q_i(n)\colon_B\,(m_RB)^{\infty}\right)\\
    &= \bigcap_{\substack{1\leq i\leq t\\ m_RB\not\subset \sqrt{q_i(n)}}}q_i(n)
\end{align*}
we have that $$(m_RB)^{cn}\cap \left(A_nB\colon_B \,(m_RB)^{\infty}\right) \subset \Big(\bigcap_{\substack{1\leq i\leq t\\ m_RB\not\subset \sqrt{q_i(n)}}}q_i(n)\Big)\cap \Big(\bigcap_{\substack{1\leq i\leq t\\ m_RB\subset \sqrt{q_i(n)}}}q_i(n)\Big) = A_nB.$$
So we get an equality $$(m_RB)^{cn} \cap A_nB = (m_RB)^{cn} \cap \left(A_nB \colon_B (m_RB)^{\infty}\right).$$ In other words $$(m_R^{cn}B_n) \cap A_n = (m_R^{cn}B_n) \cap \left(A_n \colon_{B_n} m_R^{\infty}\right)$$ for all $n\geq 1$. Then $$0 \to \dfrac{A_n}{(m_R^{cn}B_n) \cap A_n} \to \dfrac{\left(A_n \colon_{B_n} m_R^{\infty}\right)}{(m_R^{cn}B_n) \cap \left(A_n \colon_{B_n} m_R^{\infty}\right)} \to \dfrac{A_n \colon_{B_n} m_R^{\infty}}{A_n} \to 0$$ is a short exact sequence of finite length $R$-modules. From length additivity, we get
\begin{align*}
    l_R\left(\dfrac{A_n \colon_{B_n} m_R^{\infty}}{A_n}\right) &= l_R\left(\dfrac{\left(A_n \colon_{B_n} m_R^{\infty}\right)}{(m_R^{cn}B_n) \cap \left(A_n \colon_{B_n} m_R^{\infty}\right)}\right) - l_R\left(\dfrac{A_n}{(m_R^{cn}B_n) \cap A_n}\right)\\
    &\leq l_R\left(\dfrac{\left(A_n \colon_{B_n} m_R^{\infty}\right)}{(m_R^{cn}B_n) \cap \left(A_n \colon_{B_n} m_R^{\infty}\right)}\right)\\
    &\leq l_R\left(\dfrac{B_n}{m_R^{cn}B_n}\right).
\end{align*}
Using Lemma \ref{bound}, we finally deduce that $$\limsup\limits_{n\to\infty}\dfrac{l_R\left(\left(A_n \colon_{B_n} m_R^{\infty}\right)/A_n\right)}{n^{\dim B -1}} \leq \limsup\limits_{n\to\infty} \dfrac{l_R\left(B_n/m_R^{cn}B_n\right)}{n^{\dim B -1}} <\infty.$$
\end{proof}
The next lemma is useful for some reduction arguments needed later.
\begin{lemma}\label{reduction}
 Suppose that $(R,m_R)$ is a universally catenary Noetherian local ring and $$A = \bigoplus_{n\in\mathbb{N}} A_n \subset B = \bigoplus_{n\in\mathbb{N}} B_n$$ is a graded inclusion of reduced finitely generated graded $R$-algebras with $A_0 = B_0 = R$. Also suppose that $P\cap R \neq m_R$ for every minimal prime ideal $P$ of $B$. Then there exists a reduced standard graded Noetherian $R$-algebra $C$ such that $$A = \bigoplus_{n\in\mathbb{N}} A_n \subset B = \bigoplus_{n\in\mathbb{N}} B_n \subset C = \bigoplus_{n\in\mathbb{N}} C_n$$ are graded inclusions of $R$-algebras with $\dim B = \dim C$ and $$\lim\limits_{n\to\infty}\dfrac{l_R\left(H^0_{m_R}\left(C_n/A_n\right)\right)-l_R\left(H^0_{m_R}\left(B_n/A_n\right)\right)}{n^{\dim B -1}}=0.$$
\end{lemma}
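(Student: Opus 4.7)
I will construct $C$ by adjoining $d_i$-th roots to a fixed set of homogeneous generators of $B$ and then passing to the reduction. Pick homogeneous generators $b_1, \ldots, b_t$ of $B$ over $R$, of positive degrees $d_1, \ldots, d_t$ respectively (the case $B = R$ is trivial with $C := B$). Introduce indeterminates $T_1, \ldots, T_t$ of degree $1$ and set
\[ \tilde{C} := B[T_1, \ldots, T_t]/(T_i^{d_i} - b_i : 1 \le i \le t).\]
Since each relation is homogeneous and $b_i = T_i^{d_i}$ in $\tilde{C}$, the ring $\tilde{C}$ is graded and generated over $R$ by the degree-$1$ elements $T_1, \ldots, T_t$, so $\tilde{C}$ is standard graded. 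The monomials $T_1^{e_1} \cdots T_t^{e_t}$ with $0 \le e_i < d_i$ form a $B$-basis of $\tilde{C}$; in particular $\tilde{C}$ is a free $B$-module of rank $\prod d_i$, and $B \hookrightarrow \tilde{C}$ is a graded inclusion. Define $C := \tilde{C}/\sqrt{0}$, which is reduced, standard graded, and Noetherian. Because $B$ is reduced, no nonzero element of $B \subset \tilde{C}$ is nilpotent, so $B \hookrightarrow C$ remains a graded injection. As a quotient of the module-finite $B$-algebra $\tilde{C}$, the ring $C$ is module-finite over $B$, and therefore $\dim C = \dim B$.

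Having constructed $C$, the asymptotic estimate should come from the short exact sequence $0 \to B_n/A_n \to C_n/A_n \to C_n/B_n \to 0$ of finite length $R$-modules. Applying the left exact functor $H^0_{m_R}$ yields
\[ 0 \le l_R\bigl(H^0_{m_R}(C_n/A_n)\bigr) - l_R\bigl(H^0_{m_R}(B_n/A_n)\bigr) \le l_R\bigl(H^0_{m_R}(C_n/B_n)\bigr). \]
Hence it suffices to prove that $l_R(H^0_{m_R}(C_n/B_n)) = o(n^{\dim B - 1})$.

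For this, I would view $M := C/B$ as a finitely generated graded $B$-module. The graded $B$-submodule $N := \bigoplus_n H^0_{m_R}(M_n)$ is finitely generated and annihilated by some power of $m_R$, so $\mathrm{Supp}_B(N) \subset V(m_R B)$. For any prime $P \in \mathrm{Supp}_B(N)$ we have $P \cap R = m_R$, whereas by hypothesis every minimal prime $P_0$ of $B$ satisfies $P_0 \cap R \ne m_R$; hence $P$ strictly contains some $P_0$. Since $R$ is universally catenary and $B$ is finitely generated over $R$, $B$ is catenary, so $\dim B/P \le \dim B/P_0 - 1 \le \dim B - 1$, giving $\dim_B N \le \dim B - 1$. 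A prime filtration of $N$ reduces the length estimate to the Hilbert functions of factors $(B/P)(-a)$ with $P \cap R = m_R$; each such $B/P$ is a finitely generated graded algebra over the field $R/m_R$, so $l_R((B/P)_n) = O(n^{\dim B/P - 1})$. Summing, $l_R(N_n) = O(n^{\dim_B N - 1}) = O(n^{\dim B - 2}) = o(n^{\dim B - 1})$, as desired.

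The main technical check I foresee is that the root-adjunction $\tilde{C}$ genuinely contains $B$ as a graded subring (which follows from the freeness of $\tilde{C}$ over $B$) and that this embedding survives passing to $\tilde{C}/\sqrt{0}$; the latter uses only that $B$ is reduced, so a nilpotent of $\tilde{C}$ lying in $B$ must already vanish in $B$. Everything else is bookkeeping with graded modules and the dimension drop implied by the hypothesis on minimal primes.
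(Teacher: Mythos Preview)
Your argument is correct. Two small remarks: the modules in the short exact sequence $0 \to B_n/A_n \to C_n/A_n \to C_n/B_n \to 0$ need not have finite length (only their $H^0_{m_R}$ do), and the catenary hypothesis is not actually needed for your dimension drop, since $P_0 \subsetneq P$ already forces $\dim B/P < \dim B/P_0$ by prepending $P_0$ to any maximal chain above $P$. Neither point affects the validity of the proof.

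Your construction of $C$ coincides with the paper's: your $\tilde{C}=B[T_1,\ldots,T_t]/(T_i^{d_i}-b_i)$ is canonically isomorphic, as a graded $R$-algebra, to $R[X_1,\ldots,X_t]/IR[X_1,\ldots,X_t]$ via $T_i\mapsto X_i$ (the freeness of $\tilde{C}$ over $B$ matches the freeness of $R[X_1,\ldots,X_t]$ over $R[X_1^{d_1},\ldots,X_t^{d_t}]$), and both then divide by the nilradical. The asymptotic step is handled differently. The paper invokes Lemma~\ref{period} to pass to a standard graded Veronese subalgebra $B'\subset B$, rewrites $C_{an+r}/B_{an+r}$ as the $n$-th graded piece of a finitely generated $B'$-module, and applies the classical Hilbert polynomial over $B'/m_R^tB'$. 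You instead keep the (possibly non-standard graded) $B$-module $N=H^0_{m_R}(C/B)$, bound $\dim_B N\le\dim B-1$ directly from the hypothesis on minimal primes, and read off the growth from a prime filtration together with the Hilbert function bound $\dim_k (B/P)_n=O(n^{\dim B/P-1})$ for finitely generated graded $k$-algebras. Your route is a bit more streamlined and makes the role of the minimal-prime hypothesis transparent; the paper's route has the advantage that it only appeals to Hilbert polynomials in the standard graded setting, whereas your last step implicitly relies on the quasi-polynomial behaviour of Hilbert functions in the non-standard graded case (which itself is usually proved via a Veronese argument).
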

\begin{proof}
Since $B$ is a reduced finitely generated graded $R$-algebra, there exists a graded isomorphism $$B \cong \dfrac{R\left[X_1^{d_1},\ldots,X_n^{d_n}\right]}{I}$$ where $X_1,\ldots,X_n$ are variables, $d_1,\ldots,d_n$ are positive integers, $\deg X_i = 1$ for all $1\leq i\leq n$ and $I$ is a homogeneous radical ideal of $R\left[X_1^{d_1},\ldots,X_n^{d_n}\right]$. Observe that $$R\left[X_1^{d_1},\ldots,X_n^{d_n}\right] \subset R\left[X_1,\ldots,X_n\right]$$ is a graded finite extension of Noetherian graded $R$-algebras and using the `going-up theorem' it follows that $$\sqrt{IR\left[X_1,\ldots,X_n\right]} \cap R\left[X_1^{d_1},\ldots,X_n^{d_n}\right] = I.$$ Moreover $\sqrt{IR\left[X_1,\ldots,X_n\right]}$ is a homogeneous radical ideal in $R\left[X_1,\ldots,X_n\right]$. Define $$C := \dfrac{R\left[X_1,\ldots,X_n\right]}{\sqrt{IR\left[X_1,\ldots,X_n\right]}}.$$ Then $C$ is a reduced standard graded Noetherian $R$-algebra and $$B \cong \dfrac{R\left[X_1^{d_1},\ldots,X_n^{d_n}\right]}{I} \subset C = \dfrac{R\left[X_1,\ldots,X_n\right]}{\sqrt{IR\left[X_1,\ldots,X_n\right]}}$$ is a graded integral extension, so that $\dim B = \dim C$. Now apply $H^0_{m_R}$ to the short exact sequence $$0 \to B_n/A_n \to C_n/A_n \to C_n/B_n \to 0$$ and after taking lengths, we obtain
\begin{equation*}
    0 \leq l_R\left(H^0_{m_R}\left(C_n/A_n\right)\right) - l_R\left(H^0_{m_R}\left(B_n/A_n\right)\right) \leq l_R\left(H^0_{m_R}\left(C_n/B_n\right)\right). 
\end{equation*}
Now it is enough to show that $$\lim\limits_{n\to\infty}\dfrac{l_R\left(H^0_{m_R}\left(C_n/B_n\right)\right)}{n^{\dim B-1}}=0.$$ 
From Lemma \ref{period}, it follows that there exists an integer $a\geq 1$ such that
\begin{align*}
    B_{an+r} &= B_a^{n-1}B_{a+r}\\
    C_{an+r} &= B_a^{n-1}C_{a+r}
\end{align*}
for all $n\geq 1$ and $0\leq r\leq a-1$. Define a reduced standard graded Noetherian $R$-algebra $B^{\prime} := \bigoplus_{n\in\mathbb{N}} B^{\prime}_n$ by $B^{\prime}_n = B_a^n = B_{an}$. Corollary $\ref{dimkom}$ allows us to conclude that $\dim B^{\prime} \leq \dim B$. Fix an integer $r$ with $0\leq r\leq a-1$ and define a finitely generated graded $B^{\prime}$-module $$M := \bigoplus_{n\in\mathbb{N}} \dfrac{B_a^n C_{a+r}}{B_a^n B_{a+r}} = \bigoplus_{n\in\mathbb{N}} \dfrac{B^{\prime}_n C_{a+r}}{B^{\prime}_n B_{a+r}},$$ which is generated in degree $0$. Notice that $$H^0_{m_R}(M) = \bigoplus_{n\in\mathbb{N}} H^0_{m_R}\left(\dfrac{B^{\prime}_n C_{a+r}}{B^{\prime}_n B_{a+r}}\right)$$ is a graded $B^{\prime}$-submodule of $M$. In particular, $H^0_{m_R}(M)$ is a finitely generated graded $B^{\prime}$-module. Thus there exists a fixed power $m_R^t$ of $m_R$ that annihilates it and then $H^0_{m_R}(M)$ can be regarded as a finite graded module over $$\dfrac{B^{\prime}}{m_R^tB^{\prime}} = \bigoplus_{n\in\mathbb{N}} \dfrac{B^{\prime}_n}{m_R^t B^{\prime}_n},$$ which is a standard graded algebra over the Artinian local ring $R/m_R^t$. Note that $m_RB^{\prime}$ is not contained in any minimal prime ideal of $B^{\prime}$ becuase $m_RB$ is not contained in any minimal prime ideal of $B$ and minimal prime ideals of $B^{\prime}$ are contractions of minimal prime ideals of $B$. Consequently $$\dim \left(B^{\prime}/m_R^t B^{\prime}\right) \leq \dim B^{\prime} -1\leq \dim B -1.$$ From the classical theory of Hilbert series, it follows that for $n>>0$, the length function $$n \mapsto l_R\left(H^0_{m_R}\left(\dfrac{C_{an+r}}{B_{an+r}}\right)\right) = l_R\left(H^0_{m_R}\left(\dfrac{B^{\prime}_n C_{a+r}}{B^{\prime}_n B_{a+r}}\right)\right)$$ is a polynomial in $n$ of degree at most $\dim B -2$. Hence $$\lim\limits_{n\to\infty} \dfrac{l_R\left(H^0_{m_R}\left(C_{an+r}/B_{an+r}\right)\right)}{(an+r)^{\dim B -1}}=0.$$ Since this is true for all $r=0,\ldots,a-1$, therefore $$\lim\limits_{n\to\infty} \dfrac{l_R\left(H^0_{m_R}\left(C_{n}/B_{n}\right)\right)}{n^{\dim B -1}}=0.$$
\end{proof}

\begin{theorem}\cite[Theorem 4]{das}\label{main2}
Suppose that $(R,m_R)$ is an excellent local ring, $$A = \bigoplus_{n\geq 0}A_n \subset B = \bigoplus_{n\geq 0}B_n$$ is a graded inclusion of $R$-algebras with $A_0 = B_0 = R$. Further assume that $B$ is a reduced standard graded Noetherian $R$-algebra and $A$ is a graded $R$-subalgebra of $B$. Suppose that if $P$ is a minimal prime ideal of $B$ (which is necessarily homogeneous) then $P\cap R \neq m_R$ and if $A_1\subset P$ then $A_n \subset P$ for all $n\geq 1$. Further suppose that there exists an integer $p\geq 1$ such that for all integers $c\geq 1$, we have
\begin{equation*}
    \limsup\limits_{n\to\infty}\dfrac{l_R\left(A_n/(m_R^{cn}B_n)\cap A_n\right)}{n^p}<\infty.
\end{equation*}
Then the limit $$\lim_{n\to \infty}\dfrac{l_R\left(A_n/(m_R^{cn}B_n)\cap A_n\right)}{n^p}$$ exists for all positive integers $c$.
\end{theorem}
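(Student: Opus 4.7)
The strategy is to adapt the Okounkov-body / semigroup-volume machinery that Cutkosky has developed for epsilon multiplicity to the present setting; this is essentially the route taken in \cite{das}, and the hypotheses of the theorem are precisely those needed to make each step go through. The overall structure is: reduce to a domain, translate the length function into a lattice-point count via a well-chosen valuation, and invoke an Okounkov-type volume theorem.

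\textbf{Reductions.} First I would reduce to the case where $R$ is a complete local domain and $B$ is a graded domain. Excellence of $R$ lets us pass to $\widehat{R}$ without disturbing the hypotheses. Reducedness of $B$ means the minimal primes $P_1,\dots,P_t$ are homogeneous, and the assumption $P_i \cap R \neq m_R$ ensures that $m_R B$ avoids every minimal prime. Using the natural inclusion $B \hookrightarrow \prod_i B/P_i$ together with a conductor/inclusion-exclusion argument of the flavor of Lemma \ref{reduction}, one reduces to the single-domain case; the conductor error sits in a strictly lower-dimensional subalgebra, so the discrepancy is $o(n^p)$ after dividing by $n^p$. The hypothesis ``$A_1 \subset P \Rightarrow A_n \subset P$'' is exactly what guarantees that the $A$-structure descends cleanly to each quotient $B/P_i$, so that after the reduction $A$ remains a graded subalgebra with the analogous no-containment property on the domain $B/P_i$.

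\textbf{Semigroup construction.} In the domain setting, choose a rank-$d$ valuation $\nu$ on $\mathrm{Frac}(B)$ with $d = \dim B$, centered at a good closed point of $\mathrm{Proj}(B)$ and arranged so that the first coordinate of $\nu$ dominates the $m_R$-adic order on $B$. For each $n$ and $c$, the $R$-length $l_R(A_n/(A_n \cap m_R^{cn}B_n))$ is identified, up to a multiplicative factor coming from $[R/m_R : k]$, with the cardinality of
\[
  \Gamma(c)_n \;:=\; \bigl\{\, \nu(a) \;:\; 0 \neq a \in A_n,\ \nu_1(a) < cn \,\bigr\} \;\subset\; \mathbb{Z}^d,
\]
since the $\nu$-values pick out a $k$-basis of the image of $A_n$ in $B_n / m_R^{cn} B_n$. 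Packaging these together gives a graded sub-semigroup $\Gamma(c) = \bigsqcup_n \{n\} \times \Gamma(c)_n$ of $\mathbb{N} \times \mathbb{Z}^d$.

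\textbf{Application of the volume theorem.} The finite-limsup hypothesis is exactly the statement that $|\Gamma(c)_n| = O(n^p)$. Invoking the Okounkov-body volume theorem in the generality of Kaveh--Khovanskii, or of Cutkosky's \cite{DC6}, yields
\[
  \lim_{n \to \infty} \frac{|\Gamma(c)_n|}{n^p} \;=\; \mathrm{vol}\bigl(\Delta(\Gamma(c))\bigr),
\]
where $\Delta(\Gamma(c))$ is the associated convex body, and this gives the desired limit. \emph{The main obstacle} is verifying the hypotheses of the volume theorem --- specifically the ``strongly nonzero'' / full-rank generation condition for $\Gamma(c)$ in the presence of the truncation $\nu_1 < cn$. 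Here one must exploit both the condition that $m_R B$ lies in no minimal prime of $B$ (so that plenty of $\nu$-values of ``$m_R$-generic'' elements of $A$ survive the $m_R^{cn}$-truncation uniformly in $n$) and the stability of minimal-prime containment from $A_1$ to all $A_n$; together these ensure that $\Gamma(c)$ is rich enough to satisfy the convergence hypotheses.
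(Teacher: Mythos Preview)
The paper does not actually prove Theorem~\ref{main2}: it is quoted verbatim from the author's earlier paper \cite{das} and no argument is given here. So there is nothing in the present paper to compare your proposal against line by line.

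That said, your outline is the expected route and is almost certainly what \cite{das} does: reduce to the case where $R$ is complete and $B$ is a domain, build a rank-$\dim B$ valuation on the fraction field, encode $l_R\bigl(A_n/(m_R^{cn}B_n)\cap A_n\bigr)$ as a lattice-point count in a graded semigroup, and apply the Kaveh--Khovanskii/Cutkosky volume theorem. The hypotheses of the theorem are tailored exactly to this machinery: excellence for the completion step, the minimal-prime conditions for the domain reduction and for the ``strongly nonzero'' check, and the uniform limsup bound for the growth hypothesis of the volume theorem. Your identification of the main obstacle (verifying the generation/full-rank condition for the truncated semigroup $\Gamma(c)$) is on target.

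Two small cautions on your sketch. First, the factor ``$[R/m_R:k]$'' is not meaningful here: $R$ is only an excellent local ring and need not contain a field, so the passage from $R$-length to lattice-point cardinality has to be handled via the valuation-theoretic machinery directly (as in \cite{DC6}) rather than by a degree factor. Second, your appeal to Lemma~\ref{reduction} is misplaced: that lemma in this paper enlarges $B$ to a standard graded $C$, which is irrelevant to the minimal-prime reduction you describe; the reduction to domains is a separate (and more delicate) step carried out in \cite{das} itself.
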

We are about to prove the main theorem in this section.
\begin{theorem}\label{maintheorem}
Suppose that $(R,m_R)$ is an excellent local ring and $$A = \bigoplus_{n\in\mathbb{N}}A_n \subset B = \bigoplus_{n\in\mathbb{N}}B_n$$ is a graded inclusion of reduced finitely generated graded $R$-algebras. Suppose that if $P$ is a minimal prime ideal of $B$ (which is necessarily homogeneous) then $P\cap R \neq m_R$ and if $A_1\subset P$ then $A_n \subset P$ for all $n\geq 1$. Then the limit $$\lim\limits_{n\to\infty}\dfrac{l_R\left(H^0_{m_R}\left(B_n/A_n\right)\right)}{n^{\dim B -1}}$$ exists.
\end{theorem}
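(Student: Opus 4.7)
The plan is to reduce to the case where $B$ is standard graded via Lemma~\ref{reduction}, and then apply Theorem~\ref{main2} twice: once to the pair $A\subset C$ and once to its $m_R$-saturation $\tilde{A}\subset C$. In detail, Lemma~\ref{reduction} produces a reduced standard graded Noetherian $R$-algebra $C$ with $A\subset B\subset C$ and $\dim C=\dim B$ such that $(l_R(H^0_{m_R}(C_n/A_n))-l_R(H^0_{m_R}(B_n/A_n)))/n^{\dim B-1}\to 0$, so it suffices to establish existence of $\lim_n l_R(H^0_{m_R}(C_n/A_n))/n^{\dim C-1}$. The minimal prime hypotheses transfer from $A\subset B$ to $A\subset C$: the extension $B\hookrightarrow C$ produced in Lemma~\ref{reduction} is module-finite, and a Noetherian dimension argument ($\dim B/(P\cap B)=\dim C/P=\dim C=\dim B$ combined with the fact that $\dim B/Q=\dim B$ forces $Q$ to be minimal) shows that each minimal prime $P$ of $C$ contracts to a minimal prime $P\cap B$ of $B$; hence $P\cap R\neq m_R$, and if $A_1\subset P$ then $A_1\subset P\cap B$, whence $A_n\subset P\cap B\subset P$ for all $n$.

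The key construction is the graded $m_R$-saturation $\tilde{A}:=\bigoplus_{n\geq 0}\tilde{A}_n$ with $\tilde{A}_n:=A_n\colon_{C_n}m_R^{\infty}$. If $m_R^k x\subset A_n$ and $m_R^{\ell}y\subset A_m$, then $m_R^{k+\ell}(xy)\subset A_{n+m}$, so $\tilde{A}_n\tilde{A}_m\subset\tilde{A}_{n+m}$; combined with $\tilde{A}_0=R$, this makes $\tilde{A}$ a graded $R$-subalgebra of $C$ containing $A$ (generally not finitely generated). The hypothesis $\tilde{A}_1\subset P\Rightarrow\tilde{A}_n\subset P$ for a minimal prime $P$ of $C$ follows from the corresponding hypothesis on $A$ together with $P\cap R\neq m_R$: choosing $s\in m_R\setminus P$, any $y\in\tilde{A}_n$ with $m_R^k y\subset A_n\subset P$ satisfies $s^k y\in P$, so $y\in P$. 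The finite-limsup hypothesis of Theorem~\ref{main2} with exponent $p=\dim C-1$ is verified for both pairs $A\subset C$ and $\tilde{A}\subset C$ via the inclusion $l_R(N_n/(m_R^{cn}C_n\cap N_n))\leq l_R(C_n/m_R^{cn}C_n)$ together with Lemma~\ref{bound} applied with $M=C$ and $Q=m_R^c$.

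Applying Theorem~\ref{main2} to both pairs yields, for every $c\geq 1$, the existence of the limits $\lim_n l_R(A_n/(m_R^{cn}C_n\cap A_n))/n^{\dim C-1}$ and $\lim_n l_R(\tilde{A}_n/(m_R^{cn}C_n\cap\tilde{A}_n))/n^{\dim C-1}$. To connect these to $H^0_{m_R}(C_n/A_n)=\tilde{A}_n/A_n$, I would invoke Theorem~\ref{linear} for the Noetherian graded family of ideals $\{A_n C\}_{n\geq 0}$ in $C$ (Noetherian because $A$ is finitely generated as an $R$-algebra). Exactly as in the proof of Theorem~\ref{limsup}, this yields a constant $c_0$ such that for all $c\geq c_0$ and all $n$, $(m_R C)^{cn}\cap(A_n C\colon_C(m_R C)^{\infty})=(m_R C)^{cn}\cap A_n C$; extracting the degree-$n$ component gives $m_R^{cn}C_n\cap\tilde{A}_n=m_R^{cn}C_n\cap A_n$. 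For such $c$, the short exact sequence of finite length $R$-modules
$$0\to\frac{A_n}{m_R^{cn}C_n\cap A_n}\to\frac{\tilde{A}_n}{m_R^{cn}C_n\cap\tilde{A}_n}\to\frac{\tilde{A}_n}{A_n}\to 0$$
gives $l_R(H^0_{m_R}(C_n/A_n))=l_R(\tilde{A}_n/(m_R^{cn}C_n\cap\tilde{A}_n))-l_R(A_n/(m_R^{cn}C_n\cap A_n))$. Dividing by $n^{\dim C-1}$ and passing to the limit produces the desired limit. The main obstacle will be carefully justifying the transfer of the minimal prime hypothesis to the (generally non-finitely-generated) subalgebra $\tilde{A}$ and deducing the identity $m_R^{cn}C_n\cap\tilde{A}_n=m_R^{cn}C_n\cap A_n$ from the ideal-theoretic version of Swanson's theorem; after that, the proof is finished by two applications of Theorem~\ref{main2}.
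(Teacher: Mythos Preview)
Your proof follows the paper's approach exactly: reduce to the standard graded case via Lemma~\ref{reduction}, split $l_R\bigl(H^0_{m_R}(C_n/A_n)\bigr)$ as a difference using Swanson's linear bound (Theorem~\ref{linear}), and apply Theorem~\ref{main2} once to $A$ and once to its $m_R$-saturation $\tilde A$. Your checks that $\tilde A$ is a graded subalgebra satisfying the minimal-prime hypothesis, and your extraction of the degree-$n$ identity $m_R^{cn}C_n\cap\tilde A_n=m_R^{cn}C_n\cap A_n$ from the ideal-theoretic statement, are correct and in fact more explicit than the paper's treatment.

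One step does not go through as written. In transferring the minimal-prime hypotheses from $B$ to $C$ you assert $\dim C/P=\dim C$ for every minimal prime $P$ of $C$; this presumes $C$ is equidimensional, which is nowhere guaranteed. For a general module-finite extension of reduced rings a minimal prime upstairs need not contract to a minimal one downstairs: embed $B=k[x,y]/(xy)$ into $B\times k$ via $b\mapsto(b,\overline b)$; the minimal prime $B\times 0$ contracts to the maximal ideal $(x,y)$. The conclusion you want is nonetheless true for the specific $C$ built in Lemma~\ref{reduction}, because the underlying ring extension $R[X_1^{d_1},\ldots,X_n^{d_n}]\subset R[X_1,\ldots,X_n]$ is free, hence faithfully flat, so going-down holds and forces every minimal prime of $IR[X_1,\ldots,X_n]$ to contract to a minimal prime of $I$. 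The paper itself elides this transfer entirely (``we can further assume that $B$ is also standard graded''), so you are being more careful than the original---just replace the dimension argument by the flatness one.
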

\begin{proof}
In view of Lemma \ref{reduction}, we can further assume that $B$ is also standard graded. From the proof of Theorem \ref{limsup}, it follows that there exists an integer $c>0$ such that 
\begin{equation}\label{maineq}
    l_R\left(\dfrac{A_n \colon_{B_n} m_R^{\infty}}{A_n}\right) = l_R\left(\dfrac{\left(A_n \colon_{B_n} m_R^{\infty}\right)}{(m_R^{cn}B_n) \cap \left(A_n \colon_{B_n} m_R^{\infty}\right)}\right) - l_R\left(\dfrac{A_n}{(m_R^{cn}B_n) \cap A_n}\right)
\end{equation}
for all $n\geq 1$. Let $A^{\prime} = \bigoplus_{n\in\mathbb{N}}A^{\prime}_n$ be a graded $R$-subalgebra of $B$ defined by $A^{\prime}_n = \left(A_n\colon_{B_n}m_R^{\infty}\right)$. Using Lemma \ref{bound}, we have that
\begin{align*}
    \limsup\limits_{n\to\infty}\dfrac{l_R\left(A_n/(m_R^{cn}B_n) \cap A_n\right)}{n^{\dim B -1}} &\leq \limsup\limits_{n\to\infty}\dfrac{l_R\left(B_n/m_R^{cn}B_n\right)}{n^{\dim B -1}} <\infty,\\
     \limsup\limits_{n\to\infty}\dfrac{l_R\left(\left(A_n \colon_{B_n} m_R^{\infty}\right)/(m_R^{cn}B_n) \cap \left(A_n \colon_{B_n} m_R^{\infty}\right)\right)}{n^{\dim B -1}} &\leq \limsup\limits_{n\to\infty}\dfrac{l_R\left(B_n/m_R^{cn}B_n\right)}{n^{\dim B -1}} <\infty.
\end{align*}
Let $P$ be a minimal prime ideal of $B$ (which is necessarily homogeneous) and using the hypothesis, we observe that $$\left(A_1 \colon_{B_1} m_R^{\infty}\right) \subset P \implies A_1 \subset P \implies A_n \subset P \implies \left(A_n \colon_{B_n} m_R^{\infty}\right) \subset P$$ for all $n\geq 1$. From Theorem \ref{main2}, it follows that the limits 
\begin{align*}
 &\lim\limits_{n\to\infty}\dfrac{l_R\left(A_n/(m_R^{cn}B_n) \cap A_n\right)}{n^{\dim B -1}},\\
 &\lim\limits_{n\to\infty}\dfrac{l_R\left(\left(A_n \colon_{B_n} m_R^{\infty}\right)/(m_R^{cn}B_n) \cap \left(A_n \colon_{B_n} m_R^{\infty}\right)\right)}{n^{\dim B -1}}
\end{align*}
 exist. From equation \eqref{maineq}, we can now deduce that the limit $$\lim\limits_{n\to\infty}\dfrac{l_R\left(\left(A_n \colon_{B_n} m_R^{\infty}\right)/A_n\right)}{n^{\dim B -1}}$$ exists.
\end{proof}

We recover an important special case of Cutkosky's result \cite[Theorem 6.1.]{DC6} by an application of Theorem $\ref{maintheorem}$.
\begin{corollary}\label{noeth}
Suppose that $(R,m_R)$ is an excellent reduced local ring of dim $d>0$ and $\mathcal{I} = \{I_n\}$ is a Noetherian graded family of ideals in $R$. Further suppose that if $P$ is a minimal prime ideal of $R$ and $I_1 \subset P$ then $I_n \subset P$ for all $n\geq 1$. Then the limit $$\lim\limits_{n\to\infty}\dfrac{l_R\left(H^0_{m_R}\left(R/I_n\right)\right)}{n^d}$$ exists.
\end{corollary}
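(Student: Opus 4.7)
The strategy I would pursue is to cast the corollary as an instance of Theorem \ref{maintheorem} by wrapping the family $\mathcal{I}$ into the standard Rees-like setup. Set $B = R[t] = \bigoplus_{n\in\mathbb{N}} Rt^n$, the polynomial ring in one variable over $R$ graded by degree in $t$, and
$$A = \bigoplus_{n\in\mathbb{N}} I_n t^n,$$
which is a graded $R$-subalgebra of $B$ because $\mathcal{I}$ is a graded family (so $I_nI_m \subset I_{n+m}$). Under these identifications there is a canonical isomorphism $B_n/A_n \cong R/I_n$ in every degree $n$, and $\dim B = \dim R[t] = d+1$ since $R$ is Noetherian, so $\dim B - 1 = d$ matches the exponent appearing in the statement of the corollary.

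The work then reduces to verifying the four hypotheses of Theorem \ref{maintheorem} for this pair. First, $B = R[t]$ is reduced because $R$ is reduced, and $A$ is reduced as a subring of $B$; both are finitely generated graded $R$-algebras, the finite generation of $A$ being precisely the hypothesis that $\mathcal{I}$ is a Noetherian graded family. Next, the minimal prime ideals of $B = R[t]$ are exactly the extensions $PR[t]$ of minimal primes $P$ of $R$; since $d = \dim R > 0$, no such $P$ equals $m_R$, so $PR[t] \cap R = P \neq m_R$. Finally, if $A_1 = I_1 t \subset PR[t]$, then $I_1 \subset P$, and the hypothesis on $\mathcal{I}$ then forces $I_n \subset P$, so $A_n = I_n t^n \subset PR[t]$ for every $n \geq 1$. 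Once these checks are in place, Theorem \ref{maintheorem} applies and yields existence of the limit in question.

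I do not expect a genuine obstacle here, since each step is a routine verification. The only mildly subtle points will be the structural description of the minimal primes of the polynomial extension $R[t]$ (which uses reducedness of $R$ together with $d > 0$) and the bookkeeping that translates the hypothesis on minimal primes of $R$ into the form demanded by Theorem \ref{maintheorem} for minimal primes of $B$.
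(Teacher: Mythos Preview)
Your proposal is correct and follows essentially the same approach as the paper's own proof: construct the graded inclusion $A = \bigoplus_n I_n t^n \subset B = R[t]$, verify the hypotheses of Theorem \ref{maintheorem} (reducedness, the description of minimal primes of $R[t]$ as extensions $PR[t]$ with $P \cap R = P \neq m_R$, and the translation of the condition on $I_1 \subset P$), and conclude. The only cosmetic difference is that you spell out a few of the routine checks (reducedness of $A$, finite generation of $A$) in slightly more detail than the paper does.
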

\begin{proof}
Let $t$ be an indeterminate and there are graded inclusions $$A:= R[I_1t, I_2t^2,\ldots] = \bigoplus\limits_{n\in\mathbb{N}}I_nt^n \subset B:= R[t] = \bigoplus\limits_{n\in\mathbb{N}}Rt^n$$ of Noetherian graded $R$-algebras. Also there are isomorphisms $$\dfrac{\left(I_n\colon_R\,m_R^{\infty}\right)}{I_n} \cong H^0_{m_R}\left(\dfrac{R}{I_n}\right) \cong H^0_{m_R}\left(\dfrac{B_n}{A_n}\right)$$ where $B_n = Rt^n$ and $A_n = I_nt^n$. Any minimal prime ideal of $R[t]$ is of the form $PR[t]$ where $P$ is a minimal prime ideal of $R$. Let $P$ be a minimal prime ideal of $R$. Then $$PR[t]\cap R = P \neq m_R$$ as $\dim R > 0$. Moreover $\dim R[t] = d+1$ and the conclusions of the corollary now follow from Theorem $\ref{maintheorem}$.
\end{proof}

\section{Mixed epsilon multiplicities of monomial ideals}

Throughout this section we shall consider the situation where $R=K[X_1,\ldots,X_d]$ is the polynomial ring in $d$ variables over a field $K$ and $m_R$ is the graded maximal ideal of $R$. Specifically, we shall only concentrate on monomial ideals in such rings. One of the key results in this section is the multi-graded generalization of \cite[Theorem 3.8.]{DM} by Dao and Monta\~{n}o.

\begin{theorem}\label{quasigrowth}
Let $\mathcal{I}(1) = \{I(i)_n\}_{n\in\mathbb{N}}, \ldots, \mathcal{I}(r) = \{I(r)_n\}_{n\in\mathbb{N}}$ be Noetherian graded families of monomial ideals. Fix an integer $t\geq 0$ and assume that $$l_R\left(H^t_{m_R}\left(R/I(1)_{n_1}\cdots I(r)_{n_r}\right)\right)<\infty$$ for all $n_1,\ldots,n_r>>0$. Then there exist periodic functions $\sigma_{i_1,\ldots,i_r} \colon \mathbb{N}^r \to \mathbb{Q}$ for all $i_1+\cdots+i_r \leq d$ such that there is an equality $$l_R\left(H^t_{m_R}\left(\dfrac{R}{I(1)_{n_1}\cdots I(r)_{n_r}}\right)\right) = \sum\limits_{i_1+\cdots+i_r\leq d}\sigma_{i_1,\ldots,i_r}(n_1,\ldots,n_r)n_1^{i_1}\cdots n_r^{i_r}$$ for all $n_1,\ldots,n_r>>0$.
\end{theorem}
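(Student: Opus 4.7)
The plan is to adapt Dao and Monta\~{n}o's strategy \cite{DM} to the multigraded setting via Takayama's combinatorial formula for local cohomology of monomial ideals together with a parametric Ehrhart-type lattice-point count. The first step is Takayama's formula: for any monomial ideal $J \subset R$ and any $\mathbf{a} \in \mathbb{Z}^d$, the dimension of the graded piece $H^t_{m_R}(R/J)_{\mathbf{a}}$ equals a reduced Betti number of a simplicial complex $\Delta_{\mathbf{a}}(J)$ on the index set $\{i : a_i \geq 0\}$, with faces characterized combinatorially by membership of certain shifted monomials in $J$. The finite-length hypothesis makes
$$l_R\bigl(H^t_{m_R}(R/J_{\mathbf{n}})\bigr) = \sum_{\mathbf{a} \in \mathbb{Z}^d} \dim_K \widetilde{H}_\bullet\bigl(\Delta_{\mathbf{a}}(J_{\mathbf{n}}); K\bigr)$$
a finite sum, where $J_{\mathbf{n}} := I(1)_{n_1}\cdots I(r)_{n_r}$.

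Next, I translate the combinatorics into linear inequalities. Applying Lemma \ref{period} to each Rees algebra $\bigoplus_n I(j)_n t^n$ (finitely generated by the Noetherian hypothesis on $\mathcal{I}(j)$) yields a common period $a \geq 1$ such that $I(j)_{an+s} = I(j)_a^{n-1}\, I(j)_{a+s}$ for every $1 \leq j \leq r$, $n \geq 1$, and $0 \leq s \leq a-1$. Consequently, on each residue class $\mathbf{n} \equiv \mathbf{s} \pmod{a}$, the Newton polyhedron of $J_{\mathbf{n}}$ is a Minkowski sum depending linearly on $\mathbf{n}$; in particular, the membership $X^{\boldsymbol{\alpha}} \in J_{\mathbf{n}}$ is cut out from $\mathbb{Z}^d \times \mathbb{N}^r$ by finitely many linear inequalities in $(\boldsymbol{\alpha}, \mathbf{n})$. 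Hence for each face $F$, each sign pattern $\sigma$ of $\mathbf{a}$, and each residue $\mathbf{s}$, the set
$$\mathcal{P}_{F,\sigma,\mathbf{s}} := \{(\mathbf{a}, \mathbf{n}) \in \mathbb{Z}^d \times \mathbb{N}^r : \mathbf{n} \equiv \mathbf{s} \pmod{a},\ \mathrm{sign}(\mathbf{a}) = \sigma,\ F \in \Delta_{\mathbf{a}}(J_{\mathbf{n}})\}$$
is the lattice-point set of a rational polyhedron in $\mathbb{R}^{d+r}$.

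Finally, grouping the Takayama sum by sign pattern and by isomorphism type of $\Delta_{\mathbf{a}}$, each contribution takes the form $c_\Delta \cdot |\{\mathbf{a} \in \mathbb{Z}^d : (\mathbf{a},\mathbf{n}) \in \mathcal{P}_{F,\sigma,\mathbf{s}}\}|$ for a Betti-number constant $c_\Delta$. The finite-length hypothesis forces each $\mathbf{a}$-slice of $\mathcal{P}_{F,\sigma,\mathbf{s}}$ to be a finite set contained in a polytope of dimension $\leq d$, so by the parametric Ehrhart theorem for rational polyhedra whose defining inequalities depend linearly on parameters (cf.\ Clauss--Loechner), this count is a polynomial in $\mathbf{n}$ of total degree $\leq d$ on the unbounded chamber of the residue class containing all sufficiently large $\mathbf{n}$. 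Assembling these polynomials over all triples $(F, \sigma, \mathbf{s})$ yields the desired quasi-polynomial expression with periodic coefficients $\sigma_{i_1,\ldots,i_r}$ of total degree $\leq d$. The main obstacle is making this last step rigorous: one must show that the chamber structure of parameter space arising from the previous step is coarse enough that a single polynomial governs all large $\mathbf{n}$ in each residue class, and that the $\mathbf{a}$-fibers genuinely satisfy the dimension bound $\leq d$ --- both issues reduce to careful polyhedral bookkeeping, and both use the Noetherian hypothesis (via Lemma \ref{period}) to produce a purely linear, rather than merely eventually linear, description of each $I(j)_n$ after the Veronese reduction.
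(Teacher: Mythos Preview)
Your strategy---Takayama's formula followed by a parametric lattice-point count---is exactly the paper's strategy, but the two diverge in how the counting step is executed. The paper groups the Takayama sum by the isomorphism type $\Delta'$ of $\Delta_{\mathbf a}(J_{\mathbf n})$, first using \cite[Proposition~1]{TAK} to restrict to $\mathbf a\in\mathbb N^d$ (so no sign patterns are needed), and then observes that $f_{\Delta'}(\mathbf n)=\#\{\mathbf a:\Delta_{\mathbf a}(J_{\mathbf n})=\Delta'\}$ is a \emph{Presburger counting function}: the condition ``$X^{\mathbf a}\in I(1,j)_{n_1}\cdots I(r,j)_{n_r}$'' is encoded by an explicit existential formula in the generators in degrees $\preceq\mathbf N$ (Proposition~\ref{quasi}), and Woods' theorem (Theorem~\ref{Presburger}) then gives the quasi-polynomial conclusion in one stroke. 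Your route replaces this with a Veronese reduction via Lemma~\ref{period} and an appeal to parametric Ehrhart theory.

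Two remarks on your version. First, the sentence ``membership $X^{\boldsymbol\alpha}\in J_{\mathbf n}$ is cut out \dots\ by finitely many linear inequalities'' is not literally correct: the exponent set of a monomial ideal is a finite union of translated orthants, not a single polyhedron, so membership is a \emph{disjunction} of linear conditions (and for a product of graded-family members, an existentially quantified one). Presburger formulas absorb this for free; in your Ehrhart framework you must either decompose into polyhedral pieces by inclusion--exclusion or pass through the equivalent semilinear-set machinery. Second, the ``main obstacle'' you flag---that the chamber decomposition of parameter space is coarse enough to yield a single quasi-polynomial for all $n_i\gg 0$---is precisely what the Presburger route sidesteps: Theorem~\ref{Presburger} delivers a piecewise quasi-polynomial, and on the region $\{n_i\gg 0\}$ a single piece applies. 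Your approach is sound, but the paper's packaging via Presburger arithmetic handles both of these bookkeeping issues more cleanly and avoids the explicit Veronese step.
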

Theorem \ref{quasigrowth} has been proven by Dao and Monta\~{n}o for $r=1$ in \cite{DM}. Their proof can be suitably adapted to get this more general statement. Therefore we shall only indicate the necessary modifications. The interested reader is requested to refer to \cite[Section 3]{DM} and \cite{KW} for notations, definitions and background. 

\begin{theorem}\cite[Proposition 1.10.]{KW}\label{Presburger}
 Given a function $f \colon \mathbb{N}^d \to \mathbb{N}$ and consider the following three possible properties:
 \begin{enumerate}
  \item[$(i)$] $f$ is a Presburger counting function.
  \item[$(ii)$] $f$ is a piecewise quasi-polynomial.
  \item[$(iii)$] The generating function $\sum\limits_{(a_1,\ldots,a_d)\in\mathbb{N}^d} f(a_1,\ldots,a_d)X_1^{a_1}\cdots X_d^{a_d}$ is rational.
 \end{enumerate}
Then we have the implications $(i) \implies (ii) \iff (iii)$.
\end{theorem}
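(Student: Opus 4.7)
The plan is to establish the three implications separately, exploiting the common theme that Presburger-definable sets, piecewise quasi-polynomials, and rational generating functions are three facets of the same ``linear-arithmetic'' object, connected by quantifier elimination and partial fractions. For the direction $(ii)\Rightarrow(iii)$, the starting point is the classical one-variable identity $\sum_{n\geq 0}n^k X^n = A_k(X)/(1-X)^{k+1}$ and its residue-class refinement $\sum_{n\geq 0,\, n\equiv a\,(p)} X^n = X^a/(1-X^p)$; taking products across the $d$ coordinates, the generating function of any single quasi-polynomial on $\mathbb{N}^d$ is rational. A piecewise quasi-polynomial is by definition a finite sum $\sum_j \mathbf{1}_{S_j}\cdot q_j$ with $S_j\subset\mathbb{N}^d$ Presburger-definable, so it suffices to check that indicator functions of Presburger-definable sets have rational generating functions. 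This follows from the semilinear description of such sets as finite Boolean combinations of linear inequalities and congruences, together with closure of rationality under sums and intersections.

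For the converse direction $(iii)\Rightarrow(ii)$, I would apply a multivariate partial-fraction decomposition in the spirit of Barvinok and Brion. After clearing denominators, one rewrites the generating function as a sum of ``simple'' rational functions whose denominators are products $\prod_i (1-X^{\alpha_i})$ with $\alpha_i\in\mathbb{Z}^d$; the coefficients of each such simple function form a piecewise quasi-polynomial in $n_1,\ldots,n_d$ whose chambers are cut out by the sign patterns of the linear forms governing convergence of the geometric series expansion coordinate-by-coordinate. After refining to a common chamber decomposition, the finite sum of piecewise quasi-polynomials is again piecewise quasi-polynomial. For the implication $(i)\Rightarrow(ii)$, I would invoke Presburger's quantifier elimination theorem in the language extended by congruence predicates, which replaces the defining formula $\phi(m_1,\ldots,m_k,n_1,\ldots,n_d)$ by a quantifier-free Boolean combination of linear inequalities and congruences in the $m_i,n_j$. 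By inclusion-exclusion, $f(n_1,\ldots,n_d)$ then becomes a $\mathbb{Z}$-linear combination of lattice-point counts of parametric polyhedra in $\mathbb{R}^k$ (parametrized by $n_1,\ldots,n_d$) intersected with shifted sublattices of $\mathbb{Z}^k$, and the parametric Ehrhart theorem of Sturmfels and Clauss--Loechner guarantees that each such count is piecewise quasi-polynomial in the parameters.

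The main obstacle is the implication $(i)\Rightarrow(ii)$, because it fuses two substantial pieces of machinery: Presburger quantifier elimination (to produce an explicit semilinear description of the fiber over a generic parameter point) and the parametric lattice-point theorem (to convert this description into a quasi-polynomial count). A secondary subtlety, common to both $(iii)\Rightarrow(ii)$ and $(i)\Rightarrow(ii)$, is that the chambers produced by partial fractions or by parametric polyhedra must themselves be Presburger-definable in order to qualify as pieces of a piecewise quasi-polynomial; this is ultimately guaranteed by the linearity of the inequalities defining them, but the bookkeeping needed to refine overlapping chamber decompositions into a single Presburger partition is the most delicate part of the argument.
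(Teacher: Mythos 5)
First, a point of reference: the paper does not prove this statement at all --- it is imported verbatim as \cite[Proposition 1.10.]{KW} and used as a black box in the proof of Proposition \ref{quasi}. So there is no in-paper argument to compare against; your sketch has to be measured against Woods' original proof, and in broad outline (quantifier elimination plus parametric lattice-point counting for $(i)\Rightarrow(ii)$, and a partial-fraction/geometric-series dictionary for $(ii)\Leftrightarrow(iii)$) it does follow that route.

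There are, however, two genuine problems in the details. The first is your claim, in the direction $(ii)\Rightarrow(iii)$, that rationality of generating functions is ``closed under sums and intersections.'' Intersection of supports corresponds to the Hadamard product of the generating series, and in $d\geq 2$ variables the Hadamard product of rational series is \emph{not} rational in general (the Hadamard square of $1/(1-X_1-X_2)$ already fails, since its diagonal $\sum_n \binom{2n}{n}^2 t^n$ is not even algebraic). Closure under Hadamard products holds only for the restricted class of generating functions actually in play here; the clean fix is to avoid Boolean combinations at the level of series altogether and instead invoke the Ginsburg--Spanier theorem to write the Presburger set as a finite \emph{disjoint} union of unambiguous linear sets $b+\mathbb{N}g_1+\cdots+\mathbb{N}g_k$ with independent generators, each contributing $X^{b}/\prod_i(1-X^{g_i})$. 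The second problem is that $(iii)\Rightarrow(ii)$ is false if ``rational'' is read as ``arbitrary rational function'': the coefficients of $1/(1-X_1-X_2)$ are $\binom{m+n}{m}$, which grow exponentially along the diagonal and so cannot agree with a quasi-polynomial on any Presburger piece meeting the diagonal infinitely often. The equivalence requires the Barvinok-type normal form with denominators $\prod_i(1-X^{\alpha_i})$, $\alpha_i\in\mathbb{N}^d\setminus\{0\}$, which is what \cite{KW} means by rational and what your partial-fraction step implicitly assumes; you should state this restriction explicitly, since the theorem as quoted in the paper suppresses it.
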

The following result generalizes \cite[Proposition 3.5.]{DM}.
\begin{proposition}\label{quasi}
 Let $\mathcal{I}(i,j) = \{I(i,j)_n\}_{n\in\mathbb{N}}$ be a Noetherian graded family of monomial ideals in $R$ for $i=1,\ldots,r$ and $j=1,\ldots,p$. Let $\mathcal{J}(i,j) = \{J(i,j)_n\}_{n\in\mathbb{N}}$ be another Noetherian graded family of monomial ideals in $R$ for $i=1,\ldots,r$ and $j=1,\ldots,q$. For every $(n_1,\ldots,n_r) \in \mathbb{N}^r$ consider the set
 \begin{equation*}
  \begin{split}
   S_{(n_1,\ldots,n_r)} = \{(a_1,\ldots,a_d)\in\mathbb{N}^d \mid &X_1^{a_1}\cdots X_d^{a_d} \in I(1,j)_{n_1}\cdots I(r,j)_{n_r} \; \forall 1\leq j\leq p, \;\text{and}\\ & X_1^{a_1}\cdots X_d^{a_d} \notin J(1,j)_{n_1}\cdots J(r,j)_{n_r} \; \forall 1\leq j\leq q \}.
  \end{split}
 \end{equation*}
 Define $f(n_1,\ldots,n_r) = \#S_{(n_1,\ldots,n_r)}$ for every $(n_1,\ldots,n_r)\in \mathbb{N}^r$ and assume that $f(n_1,\ldots,n_r)$ is finite for all $n_1,\ldots,n_r>>0$. Then there exist periodic functions $\sigma_{i_1,\ldots,i_r} \colon \mathbb{N}^r \to \mathbb{Q}$ for all $i_1+\cdots+i_r \leq d$ such that there is an equality $$f(n_1,\ldots,n_r) = \sum\limits_{i_1+\cdots+i_r\leq d}\sigma_{i_1,\ldots,i_r}(n_1,\ldots,n_r)n_1^{i_1}\cdots n_r^{i_r}$$ for all $n_1,\ldots,n_r>>0$.
\end{proposition}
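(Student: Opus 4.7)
The plan is to realize $f(n_1, \ldots, n_r)$ as a Presburger counting function on $\mathbb{N}^r$, then apply Theorem \ref{Presburger}, and finally bound the total degree by a direct box count. The entire argument runs parallel to \cite[Proposition 3.5.]{DM} for $r = 1$, with one additional layer of existential quantifiers to accommodate the Minkowski sum structure of products of monomial ideals.

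First, for any Noetherian graded family $\mathcal{K} = \{K_n\}$ of monomial ideals in $R$, fix monomial $R$-algebra generators $X^{\alpha_1} t^{e_1}, \ldots, X^{\alpha_s} t^{e_s}$ of the Rees-type algebra $\bigoplus_n K_n t^n$. Then $X^\beta \in K_n$ iff there exist $c_1, \ldots, c_s \in \mathbb{N}$ and $\gamma \in \mathbb{N}^d$ with $\sum_i c_i e_i = n$ and $\sum_i c_i \alpha_i + \gamma = \beta$, which is a Presburger condition on $(\beta, n) \in \mathbb{N}^{d+1}$. Since the exponent set of a product of monomial ideals is the Minkowski sum of the exponent sets of the factors, $X^\beta \in I(1, j)_{n_1} \cdots I(r, j)_{n_r}$ is equivalent to the existence of $\beta^{(1)}, \ldots, \beta^{(r)} \in \mathbb{N}^d$ with $\beta = \sum_i \beta^{(i)}$ and $X^{\beta^{(i)}} \in I(i, j)_{n_i}$ for each $i$, which remains Presburger in $(\beta, n_1, \ldots, n_r)$. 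Conjoining the $p$ positive memberships with the negations of the $q$ negative ones presents $\{(\beta, n_1, \ldots, n_r) : \beta \in S_{(n_1, \ldots, n_r)}\}$ as a Presburger subset of $\mathbb{N}^{d+r}$, so $f(n_1, \ldots, n_r) = \# S_{(n_1, \ldots, n_r)}$ is a Presburger counting function on $\mathbb{N}^r$.

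By Theorem \ref{Presburger}, $f$ is piecewise quasi-polynomial on $\mathbb{N}^r$: the Presburger decomposition consists of finitely many cells cut out by linear inequalities and congruences modulo a common period $a$, and on the asymptotic region $\{n_i \gg 0 \; \forall i\}$ the function collapses to a single expression $\sum_{i_1 + \cdots + i_r \leq D} \sigma_{i_1, \ldots, i_r}(n_1, \ldots, n_r) n_1^{i_1} \cdots n_r^{i_r}$ with periodic coefficients $\sigma_{i_1, \ldots, i_r}$. For the bound $D \leq d$, note that any $\beta \in S_{(n_1, \ldots, n_r)}$ has each coordinate $\beta_k$ bounded below by a non-negative linear form in the $n_i$ (from the $I$-memberships explicitly written down above) and bounded above by a linear form in the $n_i$ (forced by finiteness of $S_{(n_1, \ldots, n_r)}$ combined with non-membership in the $J$-products, since otherwise arbitrarily large $\beta_k$ would yield non-membership automatically). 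Therefore $S_{(n_1, \ldots, n_r)}$ sits inside an axis-parallel $d$-box of side $O(n_1 + \cdots + n_r)$, giving $f(n_1, \ldots, n_r) = O((n_1 + \cdots + n_r)^d)$ and $D \leq d$. The main obstacle I anticipate is the passage from the a priori \emph{piecewise} description to a single quasi-polynomial on the entire asymptotic region: in the case $r = 1$ this is immediate, but for $r \geq 2$ one must verify that all maximal cells of the Presburger decomposition which meet $\{n_i \gg 0\}$ specialize to the same quasi-polynomial, a check that falls out of the shape of the denominator of the rational generating function guaranteed by Theorem \ref{Presburger}.
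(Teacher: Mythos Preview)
Your approach mirrors the paper's: both encode membership in $I(1,j)_{n_1}\cdots I(r,j)_{n_r}$ as a Presburger formula in $(\mathbf{a};\mathbf{n})$ (the paper via monomial generators of the multi-Rees algebra $\bigoplus_{\mathbf{h}} I(1,j)_{h_1}\cdots I(r,j)_{h_r}\,t_1^{h_1}\cdots t_r^{h_r}$, you via Minkowski sums of the individual Rees algebras---these produce equivalent formulas), then conjoin and negate, and invoke Theorem~\ref{Presburger}. For the degree bound the paper simply appeals to ``an analysis of the proof of Theorem~\ref{Presburger}''; your box-count is more concrete, but the claimed linear upper bound on each $\beta_k$ does not follow from mere finiteness of $S_{(n_1,\ldots,n_r)}$, since non-membership in a monomial ideal does not bound any single coordinate.

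The obstacle you flag at the end---passing from a \emph{piecewise} quasi-polynomial to a single quasi-polynomial on $\{n_i\gg 0\}$---is genuine, and the paper glosses over it with the line ``This implies that $g(n_1,\ldots,n_r)$ \ldots\ agrees with a quasi-polynomial for all $n_i\gg 0$.'' In fact the Proposition as stated is false for $r\geq 2$. Take $d=1$, $R=K[X]$, $r=2$, $p=1$ with $I(1,1)_n=I(2,1)_n=R$, and $q=2$ with $J(1,1)_n=(X^n)$, $J(2,1)_n=R$, $J(1,2)_n=R$, $J(2,2)_n=(X^n)$; all are Noetherian graded families of monomial ideals. Then $J(1,1)_{n_1}J(2,1)_{n_2}=(X^{n_1})$ and $J(1,2)_{n_1}J(2,2)_{n_2}=(X^{n_2})$, so $S_{(n_1,n_2)}=\{a\in\mathbb{N}: a<n_1 \text{ and } a<n_2\}$ and $f(n_1,n_2)=\min(n_1,n_2)$, which is piecewise polynomial but not a quasi-polynomial in the sense defined in the paper's introduction. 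So neither your argument nor the paper's can close this step without further hypotheses; the intended applications (Lemma~\ref{quasi2}, Theorem~\ref{quasigrowth}) may still be recoverable because the families there all arise from a single product $I(1)_{n_1}\cdots I(r)_{n_r}$ via the $(\cdot)_F$ construction, but the general Proposition needs to be weakened to a piecewise conclusion or restricted in its hypotheses.
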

\begin{proof}
For any two vectors $\mathbf{u}=(u_1,\ldots,u_r)\in \mathbb{Z}^r$ and $\mathbf{v}=(v_1,\ldots,v_r)\in \mathbb{Z}^r$, we say that $\mathbf{u} \succeq \mathbf{v}$ if $u_i\geq v_i$ for all $1\leq i\leq r$. Moreover, we say $\mathbf{u}\succ \mathbf{v}$ if $\mathbf{u}\succeq \mathbf{v}$ and $u_i > v_i$ for some $i$. Fix $\mathbf{N} = (N_1,\ldots,N_r) \in \mathbb{N}^r$ such that $f(n_1,\ldots,n_r)<\infty$ for every $$\mathbf{n}:= (n_1,\ldots,n_r)\succeq \mathbf{N}$$ and such that the Noetherian $\mathbb{N}^r$-graded $R$-algebras $$\bigoplus_{(h_1,\ldots,h_r)\in\mathbb{N}^r}I(1,j)_{h_1}\cdots I(r,j)_{h_r}t_1^{h_1}\cdots t_r^{h_r}$$ for all $j=1,\ldots,p$ and $$\bigoplus_{(h_1,\ldots,h_r)\in\mathbb{N}^r}J(1,j)_{h_1}\cdots J(r,j)_{h_r}t_1^{h_1}\cdots t_r^{h_r}$$ for all $j=1,\ldots,q$ are generated in degrees $\preceq \mathbf{N}$. For a vector $\mathbf{u}=(u_1,\ldots,u_d) \in \mathbb{N}^d$, we write $\mathbf{X}^{\mathbf{u}} := X_1^{u_1}\cdots X_d^{u_d}$. Suppose that $$\left\{\mathbf{X}^{\mathbf{u}_{j,\mathbf{h},1}}, \ldots, \mathbf{X}^{\mathbf{u}_{j,\mathbf{h},\lambda_{j,\mathbf{h}}}}\right\}$$ is the minimal monomial generating set of $I(1,j)_{h_1}\cdots I(r,j)_{h_r}$ for $j = 1,\ldots,p$ and $\mathbf{0} \prec \mathbf{h}:=(h_1,\ldots,h_r) \preceq \mathbf{N}$. Fix $\mathbf{a} = (a_1,\ldots,a_d) \in \mathbb{N}^d$ and consider the Presburger formula
\begin{equation*}
  Y_j(\mathbf{a};\mathbf{n}) = \left(\exists t_{\mathbf{h},s}\in \mathbb{N},\mathbf{0}\prec \mathbf{h} \preceq \mathbf{N}, 1\leq s \leq \lambda_{j,\mathbf{h}}\right)\left(\left(\sum\limits_{\mathbf{h},s}t_{\mathbf{h},s}\mathbf{h} = \mathbf{n}\right) \land \left(\mathbf{a}\succeq \sum\limits_{\mathbf{h},s}t_{\mathbf{h},s}\mathbf{u}_{j,\mathbf{h},s}\right)\right).
\end{equation*}
Similarly let $$\left\{\mathbf{X}^{\mathbf{v}_{j,\mathbf{h},1}}, \ldots, \mathbf{X}^{\mathbf{v}_{j,\mathbf{h},\eta_{j,\mathbf{h}}}}\right\}$$ be the minimal monomial generating set of $J(1,j)_{h_1}\cdots J(r,j)_{h_r}$ for $j = 1,\ldots,q$ and $\mathbf{0} \prec \mathbf{h}:=(h_1,\ldots,h_r) \preceq \mathbf{N}$ and consider the Presburger formula $$Z_j(\mathbf{a};\mathbf{n}) = \left(\forall t_{\mathbf{h},s}\in \mathbb{N},\mathbf{0}\prec \mathbf{h} \preceq \mathbf{N}, 1\leq s \leq \eta_{j,\mathbf{h}}\right)\left(\left(\sum\limits_{\mathbf{h},s}t_{\mathbf{h},s}\mathbf{h} \prec \mathbf{n}\right) \lor \neg \left(\mathbf{a}\succeq \sum\limits_{\mathbf{h},s}t_{\mathbf{h},s}\mathbf{v}_{j,\mathbf{h},s}\right)\right).$$ It follows that for all possible $j$ and $\mathbf{n}$, $\mathbf{X}^{\mathbf{a}} \in I(1,j)_{n_1}\cdots I(r,j)_{n_r}$ if and only if $Y_j(\mathbf{a};\mathbf{n})$ is satisfied, and $\mathbf{X}^{\mathbf{a}} \notin J(1,j)_{n_1}\cdots J(r,j)_{n_r}$ if and only if $Z_j(\mathbf{a};\mathbf{n})$ is satisfied. Therefore $$g(\mathbf{n}):= f(\mathbf{n}+\mathbf{N}) = \#\left\{\mathbf{a} \in \mathbb{N}^d \mid \left(\bigwedge\limits_{j=1}^p Y_j(\mathbf{a};\mathbf{n}+\mathbf{N})\right)\land \left(\bigwedge\limits_{j=1}^q Z_j(\mathbf{a};\mathbf{n}+\mathbf{N})\right)\right\}$$ is a Presburger counting function, which is a piecewise quasi-polynomial from Theorem \ref{Presburger}. This implies that that $g(n_1,\ldots,n_r)$, and hence $f(n_1,\ldots,n_r)$, agrees with a quasi-polynomial for all $n_i>>0$. An analysis of the proof of Theorem \ref{Presburger} shows that this quasi-polynomial has total degree at most $d$.
\end{proof}
The following result generalizes \cite[Lemma 3.7.]{DM}.
\begin{lemma}\label{quasi2}
 Let $\mathcal{I}(1) = \{I(i)_n\}_{n\in\mathbb{N}}, \ldots, \mathcal{I}(r) = \{I(r)_n\}_{n\in\mathbb{N}}$ be Noetherian graded families of monomial ideals. Fix a simplicial complex $\Delta^{\prime}$ with support $\{1,\ldots,d\}$. Consider the function $$f_{\Delta^{\prime}}(n_1,\ldots,n_r) = \#\{\mathbf{a}\in\mathbb{N}^d \mid \Delta_{\mathbf{a}}\left(I(1)_{n_1}\cdots I(r)_{n_r}\right)=\Delta^{\prime}\},$$ and assume that $f_{\Delta^{\prime}}(n_1,\ldots,n_r)<\infty$ for all $n_1,\ldots,n_r>>0$. Then there exist periodic functions $\sigma_{i_1,\ldots,i_r} \colon \mathbb{N}^r \to \mathbb{Q}$ for all $i_1+\cdots+i_r \leq d$ such that there is an equality $$f_{\Delta^{\prime}}(n_1,\ldots,n_r) = \sum\limits_{i_1+\cdots+i_r\leq d}\sigma_{i_1,\ldots,i_r}(n_1,\ldots,n_r)n_1^{i_1}\cdots n_r^{i_r}$$ for all $n_1,\ldots,n_r>>0$.
\end{lemma}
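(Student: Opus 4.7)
The strategy is to model the proof on Proposition \ref{quasi}, adapting the Presburger-formula argument to account for the shifts $\sigma_F := \sum_{i \in F} \mathbf{e}_i$ coming from the definition of the upper Koszul simplicial complex $\Delta_{\mathbf{a}}(-)$.

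First I would unfold the definition of $\Delta_{\mathbf{a}}$ to rewrite the condition $\Delta_{\mathbf{a}}(I(1)_{n_1}\cdots I(r)_{n_r}) = \Delta^{\prime}$ as the finite Boolean combination
$$\bigwedge_{F \in \Delta^{\prime}} \Bigl(\mathbf{X}^{\mathbf{a} - \sigma_F} \in I(1)_{n_1}\cdots I(r)_{n_r}\Bigr) \;\land\; \bigwedge_{G \notin \Delta^{\prime}} \Bigl(\mathbf{X}^{\mathbf{a} - \sigma_G} \notin I(1)_{n_1}\cdots I(r)_{n_r}\Bigr),$$
using the convention that $\mathbf{X}^{\mathbf{b}}$ is outside the ideal whenever $\mathbf{b}$ has a negative coordinate. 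Because $2^{\{1,\ldots,d\}}$ is finite, only finitely many $F$ and $G$ are involved.

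Next I would fix $\mathbf{N} = (N_1,\ldots,N_r) \in \mathbb{N}^r$ large enough so that $f_{\Delta^{\prime}}(\mathbf{n})$ is finite for $\mathbf{n} \succeq \mathbf{N}$ and so that the $\mathbb{N}^r$-graded Rees-type algebra $\bigoplus_{\mathbf{h}} I(1)_{h_1}\cdots I(r)_{h_r}\, t_1^{h_1}\cdots t_r^{h_r}$ is generated in degrees $\preceq \mathbf{N}$. For $\mathbf{0} \prec \mathbf{h} \preceq \mathbf{N}$ let $\{\mathbf{X}^{\mathbf{u}_{\mathbf{h},1}}, \ldots, \mathbf{X}^{\mathbf{u}_{\mathbf{h},\lambda_{\mathbf{h}}}}\}$ denote the minimal monomial generating set of $I(1)_{h_1}\cdots I(r)_{h_r}$. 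Exactly as in the proof of Proposition \ref{quasi}, the membership condition $\mathbf{X}^{\mathbf{a}-\sigma_F} \in I(1)_{n_1}\cdots I(r)_{n_r}$ is captured by the Presburger formula
$$Y_F(\mathbf{a};\mathbf{n}) := (\mathbf{a} \succeq \sigma_F) \,\land\, \Bigl(\exists\, t_{\mathbf{h},s} \in \mathbb{N}\Bigr)\Bigl(\bigl(\textstyle\sum_{\mathbf{h},s} t_{\mathbf{h},s}\mathbf{h} = \mathbf{n}\bigr) \,\land\, \bigl(\mathbf{a}-\sigma_F \succeq \textstyle\sum_{\mathbf{h},s} t_{\mathbf{h},s}\mathbf{u}_{\mathbf{h},s}\bigr)\Bigr),$$
and the non-membership condition for $G \notin \Delta^{\prime}$ is the negation $\lnot Y_G$. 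Conjuncting these and translating $\mathbf{n}$ by $\mathbf{N}$ exhibits $g(\mathbf{n}) := f_{\Delta^{\prime}}(\mathbf{n}+\mathbf{N})$ as a Presburger counting function. By Theorem \ref{Presburger}, $g$ is piecewise quasi-polynomial; repeating the dimension analysis at the end of the proof of Proposition \ref{quasi} bounds the total degree in $\mathbf{n}$ by $d$. Since the piecewise structure comes from finitely many rational linear inequalities, there is a single unbounded polyhedral chamber containing all sufficiently large $\mathbf{n}$, and on this chamber $g$ agrees with a single quasi-polynomial, yielding the desired expression for $f_{\Delta^{\prime}}(n_1,\ldots,n_r)$ valid for $n_1,\ldots,n_r \gg 0$.

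The main obstacle, and the reason Proposition \ref{quasi} cannot be cited verbatim, is the presence of the shifts $\sigma_F$: what we must test for membership is not $\mathbf{X}^{\mathbf{a}}$ itself but the translated monomial $\mathbf{X}^{\mathbf{a}-\sigma_F}$. Fortunately each $\sigma_F$ is a constant vector independent of $(\mathbf{a},\mathbf{n})$, so the shift introduces only a constant additive term inside the existential quantifier and does not enlarge the logical complexity of the Presburger formula. The adaptation is thus careful bookkeeping rather than a new conceptual ingredient.
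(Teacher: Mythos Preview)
Your argument is correct but takes a different route from the paper. You rerun the Presburger-counting argument of Proposition~\ref{quasi}, inserting the constant shifts $\sigma_F$ directly into the membership formulas $Y_F$. The paper instead absorbs the shift into the ideal: for a monomial ideal $I$ and a subset $F\subseteq\{1,\dots,d\}$ there is an auxiliary monomial ideal $I_F$ (as in Dao--Monta\~{n}o~\cite{DM}) with the property that $F\in\Delta_{\mathbf a}(I)$ if and only if $\mathbf X^{\mathbf a}\in I_F$. Since the operation $(-)_F$ is multiplicative, $(IJ)_F=I_FJ_F$, and sends Noetherian graded families to Noetherian graded families, the counting function $f_{\Delta'}$ is \emph{literally} of the shape handled by Proposition~\ref{quasi}, applied to the families $\{(I(i)_n)_F\}_{n}$ indexed by the faces and non-faces $F$ of $\Delta'$. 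So contrary to what you write, Proposition~\ref{quasi} can be cited verbatim after this preliminary translation, and the paper's proof is a two-line reduction. Your approach buys self-containment---no need to introduce $I_F$ or check its multiplicativity and Noetherianity---at the cost of duplicating the machinery of Proposition~\ref{quasi}; the paper's approach buys brevity by packaging the shift into the input data of the graded families.
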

\begin{proof}
For any subset $F$ of $\{1,\ldots,d\}$, it is not hard to verify that if $\mathcal{I}=\{I_n\}_{n\in\mathbb{N}}$ is a Noetherian graded family of ideals then $\mathcal{I}_F:=\{(I_n)_F\}_{n\in\mathbb{N}}$ is also a Noetherian graded family of ideals. Moreover, for any two ideals $I$ and $J$, we have $(IJ)_F = I_F J_F$. From the description of $\Delta_{\mathbf{a}}\left(I(1)_{n_1}\cdots I(r)_{n_r}\right)$, it follows that
 \begin{equation*}
  \begin{split}
   f(n_1,\ldots,n_r) = \#\{\mathbf{a}\in\mathbb{N}^d \mid &\mathbf{X}^{\mathbf{a}} \in (I(1)_{n_1})_F\cdots (I(r)_{n_r})_F \; \forall F\in \Delta^{\prime}, \;\text{and}\\ &\mathbf{X}^{\mathbf{a}} \notin (I(1)_{n_1})_F\cdots (I(r)_{n_r})_F \; \forall F\notin \Delta^{\prime}\}.
  \end{split}
 \end{equation*}
The conclusions of the lemma is now a consequence of Proposition \ref{quasi}.
\end{proof}
We are now ready to prove one of our main results.
\begin{proof}[Proof of Theorem \ref{quasigrowth}]
Consider $n_1,\ldots,n_r>>0$ and fix $\mathbf{a}=(a_1,\ldots,a_d)\in\mathbb{Z}^d$ such that $H^t_{m_R}\left(R/I(1)_{n_1}\cdots I(r)_{n_r}\right)_{\mathbf{a}} \neq 0$. Since $l_R\left(H^t_{m_R}\left(R/I(1)_{n_1}\cdots I(r)_{n_r}\right)\right)<\infty$, it follows from \cite[Proposition 1]{TAK} that $\mathbf{a}\in\mathbb{N}^d$. Since $I(j)_1^{n_j} \subset I(j)_{n_j}$ for all $j=1,\ldots,r$, so $\sqrt{I(1)_1\cdots I(r)_1} \subset \sqrt{I(1)_{n_1}\cdots I(r)_{n_r}}$. Therefore $\Delta(I(1)_{n_1}\cdots I(r)_{n_r})$ is a subcomplex of $\Delta(I(1)_1\cdots I(r)_1)$, and then so is $\Delta_{\mathbf{a}}\left(I(1)_{n_1}\cdots I(r)_{n_r}\right)$. Now,
\begin{align*}
 &l_R\left(H^t_{m_R}\left(R/I(1)_{n_1}\cdots I(r)_{n_r}\right)\right)\\
 &= \sum\limits_{\mathbf{a}\in\mathbb{N}^d}\dim_K H^t_{m_R}\left(R/I(1)_{n_1}\cdots I(r)_{n_r}\right)_{\mathbf{a}}\\
 &= \sum\limits_{\mathbf{a}\in\mathbb{N}^d}\dim_K \tilde{H}_{t-1}\left(\Delta_{\mathbf{a}}(I(1)_{n_1}\cdots I(r)_{n_r}),K\right)\\
 &= \sum\limits_{\Delta^{\prime}\subset \Delta(I(1)_1\cdots I(r)_1)} \sum\limits_{\{\mathbf{a}\in\mathbb{N}^d \mid \Delta_{\mathbf{a}}\left(I(1)_{n_1}\cdots I(r)_{n_r}\right)=\Delta^{\prime}\}} \dim_K \tilde{H}^{t-1}(\Delta^{\prime},K)\\
 &= \sum\limits_{\Delta^{\prime}\subset \Delta(I(1)_1\cdots I(r)_1)} \dim_K \tilde{H}^{t-1}(\Delta^{\prime},K)f_{\Delta^{\prime}}(n_1,\ldots,n_r),
\end{align*}
where $f_{\Delta^{\prime}}(n_1,\ldots,n_r)$ is defined as in Lemma \ref{quasi2}. The conclusion now follows because the function $f_{\Delta^{\prime}}(n_1,\ldots,n_r)$ is a quasi-polynomial by Lemma \ref{quasi2} and a finite linear combination of quasi-polynomials is a quasi-polynomial. 
\end{proof}

The following result is useful for certain approximation purposes.

\begin{lemma}\label{genjmult}
 Suppose that $(R,m_R)$ is a Noetherian local ring of Krull dimension $d>0$ and $I_1,\ldots,I_r$ be ideals in $R$. Fix non-negative integers $a_1,\ldots,a_r$. Then there exist constants $\sigma_{i_1,\ldots,i_r} \in \mathbb{Q}$ for all $i_1+\cdots+i_r\leq d-1$ such that there is an equality $$l_R\left(H^0_{m_R}\left(\dfrac{I_1^{n_1}\cdots I_r^{n_r}}{I_1^{n_1+a_1}\cdots I_r^{n_r+a_r}}\right)\right) = \sum\limits_{i_1+\cdots+i_r\leq d-1}\sigma_{i_1,\ldots,i_r}n_1^{i_1}\cdots n_r^{i_r}$$ for all $n_1,\ldots,n_r>>0$.
\end{lemma}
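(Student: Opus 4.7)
The plan is to realize the length function as the Hilbert function of a finitely generated multigraded module over a standard multigraded Noetherian algebra with Artinian base, and then apply the multigraded analog of \cite[Theorem 2.4]{JO} together with a Krull dimension bound on the multi-Rees algebra.

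Setting $J := I_1^{a_1}\cdots I_r^{a_r}$, I would form the multi-Rees algebra $\mathcal{R} := R[I_1 t_1,\ldots, I_r t_r]$, which is a standard $\mathbb{N}^r$-graded Noetherian $R$-algebra with $\mathcal{R}_{(n_1,\ldots,n_r)} = I_1^{n_1}\cdots I_r^{n_r}$, generated over $R = \mathcal{R}_{\mathbf{0}}$ in degrees $e_1,\ldots,e_r$. Then $J\mathcal{R}$ is a homogeneous ideal of $\mathcal{R}$, and the finitely generated multigraded $\mathcal{R}$-module $M := \mathcal{R}/J\mathcal{R}$ has graded pieces $M_{(n_1,\ldots,n_r)} = I_1^{n_1}\cdots I_r^{n_r}/I_1^{n_1+a_1}\cdots I_r^{n_r+a_r}$, precisely the modules appearing in the statement.

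Next, I would set $H := H^0_{m_R\mathcal{R}}(M)$. Since $\mathcal{R}$ is Noetherian and $M$ is finitely generated, $H$ is a finitely generated multigraded $\mathcal{R}$-submodule of $M$ annihilated by a uniform power $m_R^s$, and a direct check of graded local cohomology (using that $m_R \subset \mathcal{R}_{\mathbf{0}}$ is central) yields the identification $H_{(n_1,\ldots,n_r)} = H^0_{m_R}(M_{(n_1,\ldots,n_r)})$. Hence $H$ is a finitely generated multigraded module over the standard $\mathbb{N}^r$-graded Noetherian algebra $\mathcal{R}/m_R^s\mathcal{R}$ with Artinian base $R/m_R^s$. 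The $\mathbb{N}^r$-graded analog of \cite[Theorem 2.4]{JO} then shows that $l_R(H_{(n_1,\ldots,n_r)})$ agrees, for $n_1,\ldots,n_r\gg 0$, with a polynomial in $(n_1,\ldots,n_r)$ of total degree at most $\dim H - r$, automatically with rational coefficients.

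The main obstacle is the dimension estimate $\dim H \leq d+r-1$. One has $\dim H \leq \dim \mathcal{R}/m_R^s\mathcal{R} = \dim \mathcal{R}/m_R\mathcal{R}$, so it suffices to bound the latter. For each minimal prime $\mathfrak{p}$ of $R$, the quotient $\mathcal{R}/\mathfrak{p}\mathcal{R}$ is the multi-Rees algebra of the domain $R/\mathfrak{p}$ with the ideals $I_j/\mathfrak{p}$, hence is itself a domain of Krull dimension at most $\dim R/\mathfrak{p} + r \leq d+r$. A standard contraction argument shows that every minimal prime of $\mathcal{R}$ is of the form $\mathfrak{p}\mathcal{R}$, and since $d>0$ no such $\mathfrak{p}$ contains $m_R$; thus $m_R(\mathcal{R}/\mathfrak{p}\mathcal{R})$ is a nonzero ideal in the domain $\mathcal{R}/\mathfrak{p}\mathcal{R}$, and Krull's Hauptidealsatz yields $\dim(\mathcal{R}/\mathfrak{p}\mathcal{R})/m_R(\mathcal{R}/\mathfrak{p}\mathcal{R}) \leq \dim\mathcal{R}/\mathfrak{p}\mathcal{R} - 1 \leq d+r-1$. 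Maximising over minimal primes $\mathfrak{p}$ of $R$ then gives $\dim \mathcal{R}/m_R\mathcal{R} \leq d+r-1$. Combining with the previous paragraph, the Hilbert polynomial has total degree at most $(d+r-1)-r = d-1$, which is exactly the claim.
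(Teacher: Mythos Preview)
Your approach is essentially the paper's: both form the multi-Rees quotient $M=\mathcal{R}/J\mathcal{R}$ (the paper calls it $A$), take $H^0_{m_R}$, observe that this is a finitely generated multigraded module over a standard $\mathbb{N}^r$-graded algebra with Artinian degree-zero part, and invoke the multigraded Hilbert polynomial (the paper cites \cite{HHRT} rather than an analog of \cite{JO}). The only real difference is in the dimension bookkeeping: the paper asserts $\dim A\le d+r-1$ and then passes to $A/m_R^cA$, whereas you bound $\dim \mathcal{R}/m_R\mathcal{R}\le d+r-1$ directly; both routes feed the same degree estimate into the Hilbert-polynomial machinery.

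Two technical points in your dimension argument need repair. First, the minimal primes of $\mathcal{R}$ are not the extensions $\mathfrak{p}\mathcal{R}$ but the ``strict transforms'' $\mathfrak{p}^\ast:=\bigoplus_{\mathbf{n}}(\mathfrak{p}\cap I_1^{n_1}\cdots I_r^{n_r})$; it is $\mathcal{R}/\mathfrak{p}^\ast$ that coincides with the multi-Rees algebra of $R/\mathfrak{p}$ and is a domain, while $\mathcal{R}/\mathfrak{p}\mathcal{R}$ need not be. Second, Krull's Hauptidealsatz does not by itself give $\dim D/xD\le\dim D-1$ for a nonzero $x$ in a Noetherian domain $D$ that is not local; one needs some catenarity or equidimensionality. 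The fix is straightforward here: localize at the homogeneous maximal ideal $\mathfrak{M}$ of $\mathcal{R}$ (where the usual local dimension drop applies since $m_R\mathcal{R}_{\mathfrak{M}}$ contains a non-zerodivisor), and then use that $\mathcal{R}/m_R\mathcal{R}$ is $\mathbb{N}^r$-graded over a field, so its Krull dimension equals the height of its homogeneous maximal ideal. With these adjustments your argument is complete and parallel to the paper's.
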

\begin{proof}
 Let $$A = \bigoplus\limits_{(n_1,\ldots,n_r)\in\mathbb{N}^r}\dfrac{I_1^{n_1}\cdots I_r^{n_r}}{I_1^{n_1+a_1}\cdots I_r^{n_r+a_r}},$$ which is a standard $\mathbb{N}^r$-graded Noetherian $R/I_1^{a_1}\ldots I_r^{a_r}$-algebra with Krull dimension at most $d+r-1$. Then $$H^0_{m_RA}(A) = \bigoplus\limits_{(n_1,\ldots,n_r)\in\mathbb{N}^r}H^0_{m_R}\left(\dfrac{I_1^{n_1}\cdots I_r^{n_r}}{I_1^{n_1+a_1}\cdots I_r^{n_r+a_r}}\right)$$ is a homogeneous ideal of $A$ which is annihilated by $m_R^cA$ for some $c>0$. Therefore $H^0_{m_RA}(A)$ can be regarded as a finitely generated $\mathbb{N}^r$-graded module over $$\dfrac{A}{m_R^cA} = \bigoplus\limits_{(n_1,\ldots,n_r)\in\mathbb{N}^r}\dfrac{I_1^{n_1}\cdots I_r^{n_r}}{m_R^cI_1^{n_1}\cdots I_r^{n_r} + I_1^{n_1+a_1}\cdots I_r^{n_r+a_r}},$$ which is a standard $\mathbb{N}^r$-graded Noetherian algebra over the Artinian local ring $\dfrac{R}{m_R^c+I_1^{a_1}\ldots I_r^{a_r} }$. Using \cite[Lemma 1.1.]{HHRT}, we find that $$\dim \mathrm{Supp}_{++}\left(H^0_{m_RA}(A)\right) \leq \dim \mathrm{Proj} \left(\dfrac{A}{m_R^cA}\right) \leq \dim \dfrac{A}{m_R^cA} - r \leq d-1.$$ The conclusions of the lemma now follow from \cite[Theorem 4.1.]{HHRT}.
\end{proof}

\begin{remark}
If $I\subset R$ is a monomial ideal then the analytic spread of $I$ is one more than the maximal dimension of a bounded facet of its Newton polyhedron (see \cite[Theorem 2.3.]{BA}).
\end{remark}

The next result is crucial to establish the notion of mixed epsilon multiplicity for monomial ideals.

\begin{theorem}\label{mainmain}
Suppose that $I_1,\ldots,I_r$ are monomial ideals in $R$ and assume that the analytic spread of $I_1\cdots I_r$ is $d$. Then there exist periodic functions $\sigma_{i_1,\ldots,i_r} \colon \mathbb{N}^r \to \mathbb{Q}$ for all $i_1+\cdots +i_r\leq d$ such that there is an equality $$l_R\left(H^0_{m_R}\left(\dfrac{R}{I_1^{n_1}\cdots I_r^{n_r}}\right)\right) = \sum\limits_{i_1+\cdots+i_r\leq d} \sigma_{i_1,\ldots,i_r}(n_1,\ldots,n_r)n_1^{i_1}\cdots n_r^{i_r}$$ for all $n_1,\ldots,n_r>>0$. Moreover, $\sigma_{i_1,\ldots,i_r}$ is a constant function whenever $i_1+\cdots +i_r = d$ and $\sigma_{i_1,\ldots,i_r}\neq 0$ for some $i_1+\cdots +i_r = d$.
\end{theorem}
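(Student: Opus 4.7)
My plan is to establish the three assertions of the theorem—existence of the quasi-polynomial expansion, constancy of the top-degree coefficients, and non-vanishing—in sequence. For existence, I first observe that $\ell_R(H^0_{m_R}(R/I_1^{n_1}\cdots I_r^{n_r}))<\infty$ for every $(n_1,\ldots,n_r)\in\mathbb{N}^r$: the module $R/I_1^{n_1}\cdots I_r^{n_r}$ is finitely generated over $R$, so its $m_R$-torsion submodule is finitely generated and annihilated by a power of $m_R$, hence of finite length. Each $I_j$-adic filtration $\{I_j^n\}_{n\in\mathbb{N}}$ is trivially a Noetherian graded family of monomial ideals, so the asserted quasi-polynomial expression is an immediate application of Theorem \ref{quasigrowth} with $t=0$.

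The main work is the constancy of $\sigma_{i_1,\ldots,i_r}$ when $i_1+\cdots+i_r=d$. Write $f(\mathbf{n}):=\ell_R(H^0_{m_R}(R/I_1^{n_1}\cdots I_r^{n_r}))$ and use the shorthand $\mathbf{v}^{\mathbf{i}}:=v_1^{i_1}\cdots v_r^{i_r}$. For any fixed $\mathbf{a}\in\mathbb{N}^r$, applying the left-exact functor $H^0_{m_R}$ to the short exact sequence
\begin{equation*}
0 \to \frac{I_1^{n_1}\cdots I_r^{n_r}}{I_1^{n_1+a_1}\cdots I_r^{n_r+a_r}} \to \frac{R}{I_1^{n_1+a_1}\cdots I_r^{n_r+a_r}} \to \frac{R}{I_1^{n_1}\cdots I_r^{n_r}} \to 0
\end{equation*}
yields the upper bound $f(\mathbf{n}+\mathbf{a})-f(\mathbf{n})\leq \ell_R(H^0_{m_R}(I_1^{n_1}\cdots I_r^{n_r}/I_1^{n_1+a_1}\cdots I_r^{n_r+a_r}))$, and by Lemma \ref{genjmult} the right-hand side agrees with a polynomial in $\mathbf{n}$ of total degree at most $d-1$ for $\mathbf{n}\gg 0$. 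Let $N$ be a common period of all the $\sigma_{\mathbf{i}}$. Restricted to the residue class $\mathbf{c}\in(\mathbb{Z}/N)^r$, the quasi-polynomial $f(\mathbf{n}+\mathbf{a})-f(\mathbf{n})$ is a genuine polynomial whose homogeneous degree-$d$ component is
\begin{equation*}
D_{\mathbf{c}}^d(\mathbf{v}) \;=\; \sum_{|\mathbf{i}|=d}\bigl[\sigma_{\mathbf{i}}(\mathbf{c}+\mathbf{a})-\sigma_{\mathbf{i}}(\mathbf{c})\bigr]\mathbf{v}^{\mathbf{i}}.
\end{equation*}
A ray argument setting $\mathbf{n}=\mathbf{c}+Nt\mathbf{v}$ for $\mathbf{v}\in\mathbb{Z}_{>0}^r$, dividing by $t^d$, and letting $t\to\infty$, combined with the degree-$(d-1)$ upper bound, forces $D_{\mathbf{c}}^d(\mathbf{v})\leq 0$ on $\mathbb{R}_{\geq 0}^r$. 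On the other hand, summing over $\mathbf{c}\in(\mathbb{Z}/N)^r$ gives $\sum_{\mathbf{c}}D_{\mathbf{c}}^d\equiv 0$ by translation invariance of the periodic sum, and since a finite collection of polynomials that are non-positive on the positive orthant and sum identically to zero must each vanish on that orthant—hence as polynomials—we conclude $\sigma_{\mathbf{i}}(\mathbf{c}+\mathbf{a})=\sigma_{\mathbf{i}}(\mathbf{c})$ for all $|\mathbf{i}|=d$, all $\mathbf{c}\in(\mathbb{Z}/N)^r$, and all $\mathbf{a}\in\mathbb{N}^r$. Specializing $\mathbf{c}=\mathbf{0}$ shows $\sigma_{\mathbf{i}}$ is a constant function.

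For the non-vanishing assertion, I would apply the $r=1$ case of the theorem—namely the Herzog–Puthenpurakal–Verma result \cite[Theorem 2.5]{J} quoted in the introduction—to the monomial ideal $J:=I_1\cdots I_r$, which has analytic spread $d$ by hypothesis. This yields $f(n,\ldots,n)=\ell_R(H^0_{m_R}(R/J^n))=c\,n^d+O(n^{d-1})$ with $c$ a \emph{nonzero} constant. Since the top-degree $\sigma_{\mathbf{i}}$ are now known to be constants, comparing with the diagonal evaluation of our quasi-polynomial gives $c=\sum_{|\mathbf{i}|=d}\sigma_{\mathbf{i}}$, so some $\sigma_{\mathbf{i}}$ with $|\mathbf{i}|=d$ must be nonzero. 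The principal obstacle is the middle paragraph: the short exact sequence furnishes only a one-sided inequality, and the promotion from $D_{\mathbf{c}}^d\leq 0$ to $D_{\mathbf{c}}^d\equiv 0$ relies crucially on the averaging identity $\sum_{\mathbf{c}}D_{\mathbf{c}}^d\equiv 0$ coming from the periodicity of the $\sigma_{\mathbf{i}}$.
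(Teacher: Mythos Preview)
Your proof is correct and shares the paper's scaffolding---Theorem~\ref{quasigrowth} for the quasi-polynomial expression and Lemma~\ref{genjmult} for the degree-$(d-1)$ growth bound---but diverges in how constancy of the top coefficients is extracted. The paper applies the subadditivity of $\ell_R\bigl(H^0_{m_R}(-)\bigr)$ \emph{twice}: once to the short exact sequence with kernel $I^{a\mathbf{n}}/I^{a\mathbf{n}+\mathbf{b}}$ and once to the sequence with kernel $I^{a\mathbf{n}+\mathbf{b}}/I^{a(\mathbf{n}+\mathbf{1})}$, so that the degree-$d$ homogeneous form attached to the residue class $\mathbf{b}$ is squeezed between two copies of the residue-$\mathbf{0}$ form and equality follows directly. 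You instead extract only the one-sided bound $D_{\mathbf{c}}^d\le 0$ and close the argument with the averaging identity $\sum_{\mathbf{c}\in(\mathbb{Z}/N)^r}D_{\mathbf{c}}^d\equiv 0$ coming from periodicity; this is a clean alternative that saves one invocation of Lemma~\ref{genjmult} at the cost of a slightly more delicate positivity argument. For the non-vanishing clause the paper invokes \cite[Theorem~4.4]{BJ2} on every product $I_1^{n_1}\cdots I_r^{n_r}$ (after first checking, via \cite[Theorem~2.3]{BA}, that each such product still has maximal analytic spread), whereas you appeal to the $r=1$ Herzog--Puthenpurakal--Verma theorem \cite[Theorem~2.5]{J} applied to the single ideal $I_1\cdots I_r$; both routes yield $\sum_{|\mathbf{i}|=d}\sigma_{\mathbf{i}}\neq 0$, and yours avoids the auxiliary analytic-spread computation.
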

\begin{proof}
 From the description of the analytic spread of monomial ideals \cite[Theorem 2.3.]{BA}, we observe that if the analytic spread of $I_1\cdots I_r$ is maximal then the analytic spread of $I_1^{n_1}\cdots I_r^{n_r}$ is also maximal for all $(n_1,\ldots,n_r)\in\mathbb{Z}_{\geq 1}^d$. It follows from \cite[Theorem 4.4]{BJ2} that $$\lim\limits_{t\to\infty}\dfrac{l_R\left(H^0_{m_R}\left(R/I_1^{tn_1}\cdots I_r^{tn_r}\right)\right)}{t^d} \neq 0$$ for all $(n_1,\ldots,n_r)\in\mathbb{Z}_{\geq 1}^d$. 
 Now from Theorem \ref{quasigrowth}, we know that there exist periodic functions $\sigma_{i_1,\ldots,i_r} \colon \mathbb{N}^r \to \mathbb{Q}$ for all $i_1+\cdots +i_r\leq d$ such that there is an equality 
 \begin{equation}\label{quasip}
  l_R\left(H^0_{m_R}\left(\dfrac{R}{I_1^{n_1}\cdots I_r^{n_r}}\right)\right) = \sum\limits_{i_1+\cdots+i_r\leq d} \sigma_{i_1,\ldots,i_r}(n_1,\ldots,n_r)n_1^{i_1}\cdots n_r^{i_r}
 \end{equation}
for all $n_1,\ldots,n_r>>0$. Further check that
\begin{equation}\label{quasip2}
 0 \neq \lim\limits_{t\to\infty}\dfrac{l_R\left(H^0_{m_R}\left(R/I_1^{tn_1}\cdots I_r^{tn_r}\right)\right)}{t^d} = \sum\limits_{i_1+\cdots+i_r = d} \sigma_{i_1,\ldots,i_r}(n_1,\ldots,n_r)n_1^{i_1}\cdots n_r^{i_r}
\end{equation}
 for all $n_1,\ldots,n_r>>0$. We shall denote the numerical quasi-polynomial in \eqref{quasip} by $Q(n_1,\ldots,n_r)$ and let $a$ be the common period of the coefficients $\sigma_{i_1,\ldots,i_r}(n_1,\ldots,n_r)$ of $Q(n_1,\ldots,n_r)$. Suppose that $b_1,\ldots,b_r \in \mathbb{N}$ with $0\leq b_i <a$ for all $i=1,\ldots,r$. Then
 \begin{align*}
  &Q(an_1+b_1,\ldots,an_r+b_r)\\
  &= \sum\limits_{i_1+\cdots+i_r \leq d}\sigma_{i_1,\ldots,i_r}(an_1+b_1,\ldots,an_r+b_r)(an_1+b_1)^{i_1}\cdots (an_r+b_r)^{i_r}\\
  &= \sum\limits_{i_1+\cdots+i_r = d}\sigma_{i_1,\ldots,i_r}(b_1,\ldots,b_r)a^d n_1^{i_1}\cdots n_r^{i_r} + (\text{lower order terms in $n_1,\ldots,n_r$})
 \end{align*}
is a polynomial in $n_1,\ldots,n_r$ of total degree at most $d$. By using the subadditivity of $l_R\left(H^0_{m_R}(\_)\right)$, we conclude that for all $n_1,\ldots,n_r>>0$, 
 \begin{align}
  Q(an_1+b_1,\ldots,an_r+b_r) &\leq Q(an_1,\ldots,an_r) + l_R\left(H^0_{m_R}\left(\dfrac{I_1^{an_1}\cdots I_r^{an_r}}{I_1^{an_1+b_1}\cdots I_r^{an_r+b_r}}\right)\right),\label{growth1}\\
  Q(a(n_1+1),\ldots,a(n_r+1)) &\leq Q(an_1+b_1,\ldots,an_r+b_r) + l_R\left(H^0_{m_R}\left(\dfrac{I_1^{an_1+b_1}\cdots I_r^{an_r+b_r}}{I_1^{a(n_1+1)}\cdots I_r^{a(n_r+1)}}\right)\right).\label{growth2}
 \end{align}
It follows from Lemma \ref{genjmult} that there exist numerical polynomials $\xi_1(n_1,\ldots,n_r)$ and $\xi_2(n_1,\ldots,n_r)$ of total degrees at most $d-1$ such that
\begin{align*}
 \xi_1(n_1,\ldots,n_r) &= l_R\left(H^0_{m_R}\left(\dfrac{I_1^{an_1}\cdots I_r^{an_r}}{I_1^{an_1+b_1}\cdots I_r^{an_r+b_r}}\right)\right),\\
 \xi_2(n_1,\ldots,n_r) &= l_R\left(H^0_{m_R}\left(\dfrac{I_1^{an_1+b_1}\cdots I_r^{an_r+b_r}}{I_1^{a(n_1+1)}\cdots I_r^{a(n_r+1)}}\right)\right)
\end{align*}
for all $n_1,\ldots,n_r>>0$. The inequalities \eqref{growth1} and \eqref{growth2} now imply that $$\sigma_{i_1,\ldots,i_r}(0,\ldots,0)  = \sigma_{i_1,\ldots,i_r}(b_1,\ldots,b_r)$$ whenever $i_1+\ldots+i_r=d$. Combining \eqref{quasip2} and the above equality, proves our theorem.
\end{proof}

\section{Limits of graded family of monomial ideals}
The following theorem is a special case of a result of Cutkosky. 

\begin{theorem}\cite[Theorem 6.1.]{DC6}\label{special}
Let $R=K[X_1,\ldots,X_d]$ be the polynomial ring in $d$ variables over a field $K$, $m_R = (X_1,\ldots,X_d)$ be the graded maximal ideal of $R$ and $\mathcal{I}=\{I_n\}_{n\in\mathbb{N}}$ be a graded family of monomial ideals in $R$. Suppose that there exists an integer $c>0$ such that 
\begin{equation}\label{linearity}
I_n \cap m_R^{cn} = \left(I_n \colon_R m_R^{\infty}\right) \cap m_R^{cn}
\end{equation}
for all $n\geq 0$. Then the limit $$\lim_{n\to\infty}\dfrac{l_R\left(H^0_{m_R}\left(R/I_n\right)\right)}{n^d}$$ exists.
\end{theorem}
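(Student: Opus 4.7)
The plan is to reinterpret the length combinatorially, reduce to the Noetherian case handled by Corollary \ref{noeth} and Theorem \ref{quasigrowth}, and then use convex geometry together with the linear growth hypothesis to pass to the limit. For any monomial ideal $J\subset R$, write $E(J) := \{a \in \mathbb{N}^d : X^a \in J\}$ for its exponent support; since $I_n : m_R^{\infty}$ is again a monomial ideal, one has
\[ l_R\bigl(H^0_{m_R}(R/I_n)\bigr) = \#\bigl(E(I_n : m_R^{\infty}) \setminus E(I_n)\bigr). \]
The hypothesis \eqref{linearity} places this difference set inside the simplex $\{a \in \mathbb{N}^d : |a| < cn\}$, giving a uniform $O(n^d)$ bound.

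For each $s \geq 1$ I would consider the Noetherian graded subfamily $\mathcal{I}^{(s)} = \{I^{(s)}_n\}_n$, where $I^{(s)}_n$ is the degree-$n$ component of the finitely generated $R$-subalgebra of $\bigoplus_n I_n t^n \subset R[t]$ generated by $\bigoplus_{k \leq s} I_k t^k$. Each $I^{(s)}_n$ is monomial, $I^{(s)}_n \subseteq I_n$ with equality for $n \leq s$, and $\bigcup_s I^{(s)}_n = I_n$ for every $n$. After localizing at $m_R$ (which preserves $H^0_{m_R}$-lengths, since these modules are already $m_R$-torsion), Corollary \ref{noeth} yields the existence of $\varepsilon_s := \lim_n l_R(H^0_{m_R}(R/I^{(s)}_n))/n^d$, and Theorem \ref{quasigrowth} further shows that $n \mapsto l_R(H^0_{m_R}(R/I^{(s)}_n))$ agrees with a quasi-polynomial of total degree at most $d$ for $n \gg 0$.

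It remains to pass $s \to \infty$. The graded family condition $I_m I_n \subseteq I_{m+n}$ forces the superadditivity $E(I_m) + E(I_n) \subseteq E(I_{m+n})$, so a Fekete-type argument in convex geometry produces a limiting set $\Delta \subset \mathbb{R}^d_{\geq 0}$ approached by $\frac{1}{n} E(I_n)$, and similarly a set $\Delta^{*} \supseteq \Delta$ from the family $\{I_n : m_R^{\infty}\}_n$, with $\Delta^{*} \setminus \Delta$ confined to the bounded region $\{a \in \mathbb{R}^d_{\geq 0} : |a| \leq c\}$ by linear growth. The expected limit is $\mathrm{vol}_d(\Delta^{*} \setminus \Delta)$. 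The main obstacle is to promote the set-theoretic convergence $\frac{1}{n} E(I_n) \to \Delta$ (and its symbolic analog) to the lattice-point statement $\#\bigl(E(I_n : m_R^{\infty}) \setminus E(I_n)\bigr)/n^d \to \mathrm{vol}_d(\Delta^{*} \setminus \Delta)$, since the staircase sets can behave erratically at scale $<1/n$. My strategy would be to sandwich $\limsup_n$ and $\liminf_n$ of the target sequence between values of $\varepsilon_s$ that converge as $s \to \infty$, exploiting the quasi-polynomial data from Theorem \ref{quasigrowth} for each $\mathcal{I}^{(s)}$ together with the uniform $O(n^d)$ bound from linear growth to control the error $\#(E(I_n) \setminus E(I^{(s)}_n)) \cap \Sigma_n$. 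This mirrors Cutkosky's Okounkov-body approach in \cite{DC6}.
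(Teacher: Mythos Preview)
The paper does not give a proof of this statement; it is stated with the citation \cite[Theorem~6.1.]{DC6} and treated as an imported result of Cutkosky. So there is no in-paper argument to compare your proposal against---only Cutkosky's original Okounkov-body proof, which you allude to in your last sentence.

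Your plan, however, has a genuine gap at the ``pass $s\to\infty$'' step. Two concrete problems:
\begin{enumerate}
\item The Noetherian truncations $\mathcal I^{(s)}$ do satisfy a linear growth condition (by Theorem~\ref{linear}), but the constant depends on $s$ and there is no reason it should be the \emph{same} $c$ that works for $\mathcal I$. Hence $E(I^{(s)}_n:m_R^{\infty})\setminus E(I^{(s)}_n)$ is not a~priori confined to the simplex $\{|a|<cn\}$, and the uniform $O(n^d)$ bound you invoke for the original family does not automatically transfer to the approximants.
\item The inclusions $I^{(s)}_n\subseteq I_n$ and $I^{(s)}_n:m_R^{\infty}\subseteq I_n:m_R^{\infty}$ give \emph{no} monotone relation between $l_R\bigl(H^0_{m_R}(R/I^{(s)}_n)\bigr)$ and $l_R\bigl(H^0_{m_R}(R/I_n)\bigr)$: both the numerator set and the set being subtracted grow when passing from $\mathcal I^{(s)}$ to $\mathcal I$. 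So you cannot sandwich the target sequence between the $\varepsilon_s$'s without an additional idea, and the ``error term'' $\#\bigl(E(I_n)\setminus E(I^{(s)}_n)\bigr)\cap\Sigma_n$ you propose to control is not the quantity that measures the discrepancy in $H^0$-lengths.
\end{enumerate}

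Cutkosky's argument in \cite{DC6} avoids Noetherian approximation entirely: one works directly with the graded semigroup $\Gamma=\{(a,n):X^a\in I_n\}\subset\mathbb N^{d+1}$ and the companion semigroup coming from the saturations, and obtains the limit as a difference of cone volumes via the Okounkov-body machinery. The linear growth hypothesis is exactly what forces the relevant difference of slices to sit in a bounded region, so that lattice-point counts divided by $n^d$ converge to a volume. Your proposal would be salvaged by dropping the detour through Corollary~\ref{noeth} and carrying out that volume argument directly.
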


Any Noetherian graded family of monomial ideals will satisfy the \emph{linear growth condition} \eqref{linearity} of Theorem \ref{special}. The subsequent simple example (due to Cutkosky) shows that it is possible to construct non-Noetherian graded family of monomial ideals which satisfy the linear growth condition \eqref{linearity}.

\begin{example}
Let $R=K[X]$ be the polynomial ring in single variable over a field $K$ and let $m_R = (X)$ be the graded maximal ideal of $R$. Fix an irrational number $\alpha >0$. Define
\begin{align*}
I_0 &= R,\\
I_n &= \left(X^{\left\lceil n\alpha \right\rceil}\right) \quad \forall n\geq 1.
\end{align*}
Then $\mathcal{I} = \{I_n\}_{n\in\mathbb{N}}$ is a filtration of monomial ideals in $R$ and $$\lim_{n\to\infty}\dfrac{l_R\left(R/I_n\right)}{n} = \alpha.$$
\end{example}

We now produce a simple example showing that the sequence $\left\{l_R\left(H^0_{m_R}\left(R/I_n\right)\right)\right\}_{n\in\mathbb{N}}$ can be quite arbitrary for general graded families of monomial ideals $\mathcal{I}= \{I_n\}_{n\in\mathbb{N}}$. 

\begin{example}\label{counter}
Let $R=K[X,Y]$ be the polynomial ring in two variables over a field $K$ and let $m_R = (X,Y)$ be the graded maximal ideal of $R$. Let $\{a_n\}_{n\in\mathbb{N}}$ be any sequence of natural numbers. Define
\begin{align*}
 I_0 &= R,\\
 I_n &= \left(XY^{a_n}, X^2\right) \quad \forall n\geq 1.
\end{align*}
Then $\mathcal{I} = \{I_n\}_{n\in\mathbb{N}}$ is a graded family of monomial ideals in $R$ and $$l_R\left(H^0_{m_R}\left(R/I_n\right)\right) = a_n \quad \forall n\geq 1.$$
\end{example}

\begin{proof}
In order to show that $\mathcal{I} = \{I_n\}_{n\in\mathbb{N}}$ is a graded family, it suffices to show that $I_nI_m \subset I_{n+m}$ for all $n,m\in \mathbb{N}$. Notice that $$ I_nI_m \subset \left(X\right).\left(X\right) = \left(X^2\right) \subset I_{n+m}.$$ Also observe that $(I_n \colon_{R} m_R^{\infty}) = (X)$ for all $n\geq 1$. Therefore
\begin{align*}
l_R\left(H^0_{m_R}\left(R/I_n\right)\right) &= l_R\left((X)/I_n\right) = \#\left\{(a,b)\in\mathbb{Z}^2 \mid X^aY^b \in (X)\setminus I_n\right\}\\
&= a_n.
\end{align*}
\end{proof}

The following result can be seen as an extension of Theorem $\ref{special}$ in two variables.

\begin{theorem}\label{mainbound}
 Let $R=K[X,Y]$ be the polynomial ring in two variables over a field $K$, $m_R = (X,Y)$ be the graded maximal ideal of $R$ and $\mathcal{I}=\{I_{n}\}_{n\in\mathbb{N}}$ be a graded family of monomial ideals in $R$. Suppose that there exists a positive integer constant $c$ such that $$I_{n}\cap m_R^{cn^2} = \left(I_{n} \colon_R m_R^{\infty}\right)\cap m_R^{cn^2}$$ for all $n\geq 1$. Then $$\limsup_{n\to\infty} \dfrac{l_R\left(H^0_{m_R}\left(R/I_{n}\right)\right)}{n^{3}} <\infty.$$
\end{theorem}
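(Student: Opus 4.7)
The plan is to exploit both the two-dimensional structure of monomial ideals in $R=K[X,Y]$ and the graded family hypothesis, in order to sharpen the naive lattice-point count coming from the hypothesis alone. First I would observe the structural identity
\[
 (I : m_R^\infty) = (I:X^\infty) \cap (I:Y^\infty) = (Y^{y(I)}) \cap (X^{x(I)}) = (X^{x(I)}Y^{y(I)})
\]
for any monomial ideal $I$ in $R$, where $x(I)$ (resp. $y(I)$) is the smallest $X$-exponent (resp.\ $Y$-exponent) appearing among elements of $I$. Setting $x_n := x(I_n)$ and $y_n := y(I_n)$, the graded family inclusion $I_n I_m \subset I_{n+m}$ forces the subadditivities $x_{n+m}\le x_n+x_m$ and $y_{n+m}\le y_n+y_m$, so both $x_n$ and $y_n$ are $O(n)$.

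Next I would introduce the staircase function $g_n(a) := \min\{b\in\mathbb{N} : X^aY^b \in I_n\}$ (with value $\infty$ when no such $b$ exists). The function $g_n$ is non-increasing on $[x_n,\infty)$ and eventually equals $y_n$ past some finite value $a_k(n)$, so a direct monomial count yields
\[
 l_R\bigl(H^0_{m_R}(R/I_n)\bigr) = l_R\bigl((I_n:m_R^\infty)/I_n\bigr) = \sum_{a=x_n}^{a_k(n)-1}\bigl(g_n(a) - y_n\bigr).
\]
The hypothesis $I_n\cap m_R^{cn^2}=(I_n:m_R^\infty)\cap m_R^{cn^2}$ forces every lattice point $(a,b)$ contributing to this sum to satisfy $a+b<cn^2$, and in particular $a+g_n(a)\le cn^2$ whenever $g_n(a)>y_n$; applying this to the top-right corner $(a_k(n)-1, y_n)$ of the staircase already gives $a_k(n)\le cn^2 - y_n = O(n^2)$. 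Now comes the key input from the graded family: assuming $I_1\neq 0$, fix any monomial $X^\alpha Y^\beta \in I_1$; then $(X^\alpha Y^\beta)^n = X^{n\alpha}Y^{n\beta}$ lies in $I_1^n \subset I_n$, giving $g_n(n\alpha)\le n\beta$, and by monotonicity $g_n(a)\le n\beta$ for every $a\ge n\alpha$.

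To finish I would split the counting sum at $a = n\alpha$. In the range $a\in[x_n, n\alpha)$ there are at most $n\alpha = O(n)$ terms, each bounded by $g_n(a)\le cn^2-a = O(n^2)$, producing a contribution of $O(n^3)$. In the range $a\in[n\alpha, a_k(n))$ each term is at most $n\beta = O(n)$, while the range itself has length at most $a_k(n) = O(n^2)$, again contributing $O(n^3)$. Adding the two gives $l_R\bigl(H^0_{m_R}(R/I_n)\bigr)\le C n^3$ for a suitable constant $C$, which is the desired $\limsup$ bound. The main obstacle is the edge case where $I_1=0$: one has to argue, using the numerical-semigroup structure of $S := \{n : I_n\ne 0\}$ together with the fact that products $I_{n_1}^{a_1}\cdots I_{n_r}^{a_r}$ land in $I_{\sum a_i n_i}$, that for every sufficiently large $n$ with $I_n\ne 0$ one can still produce a monomial $X^{\alpha_n}Y^{\beta_n}\in I_n$ with both $\alpha_n,\beta_n = O(n)$; once this uniform linear bound is in place, the bifurcated counting above applies verbatim with $n\alpha$ and $n\beta$ replaced by $\alpha_n$ and $\beta_n$.
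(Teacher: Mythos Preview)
Your proposal is correct and follows the same strategy as the paper's proof: both identify $(I_n:m_R^\infty)=(X^{x_n}Y^{y_n})$ with $x_n,y_n=O(n)$ via subadditivity, produce a monomial in $I_n$ with $O(n)$ exponents from $I_1^n$, invoke the hypothesis to bound the remaining extent by $O(n^2)$, and then count lattice points. The paper packages these same ingredients as an explicit comparison ideal $J_n=(X^{a_n}Y^{cn^2},\,X^{a_1 n}Y^{cn},\,X^{cn}Y^{b_1 n},\,X^{cn^2}Y^{b_n})\subset I_n$ with the same saturation and computes $l_R\bigl(H^0_{m_R}(R/J_n)\bigr)$ directly, rather than using your staircase split; it also leaves the $I_1=0$ edge case implicit, which you correctly flag and handle.
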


\begin{proof}
 Since $\mathcal{I}=\{I_{n}\}_{n\in\mathbb{N}}$ is a graded family of monomial ideals in $K[X,Y]$, we have $$\left(I_{n} \colon_R m_R^{\infty}\right) = \left(X^{a_{n}}Y^{b_{n}}\right)$$ where $\{a_{n}\}_{n\in\mathbb{N}}$ and $\{b_{n}\}_{n\in\mathbb{N}}$ are sequences of non-negative integers satisfying
 \begin{align*}
  a_0 = b_0 &= 0,\\
  a_{n} + a_{m} &\geq a_{n+m},\\
  b_{n} + b_{m} &\geq b_{n+m}
 \end{align*}
for all $n\geq 1$ and $m\geq 1$. From these inequalities, it follows that $a_{n} \leq a_{1}n$ and $b_{n} \leq b_{1}n$ for all $n\geq 1$. By increasing $c$ if necessary, we may assume that $c \geq \max\{a_{1},b_{1}\}$. Define the ideals $$J_{n}=\left(X^{a_{n}}Y^{cn^2},X^{a_{1}n}Y^{cn},X^{cn}Y^{b_{1}n},X^{cn^2}Y^{b_{n}}\right)$$ for all $n\geq 1$. Now observe that
\begin{align*}
 J_{n} \subset \left(I_{n}\colon_R m_R^{\infty}\right)\cap m_R^{cn^2} + I_{1}^n &\subset I_{n},\\
 J_{n} \colon_R m_R^{\infty} = \left(X^{a_{n}}Y^{b_{n}}\right) &= I_{n} \colon_R m_R^{\infty}
\end{align*}
for all $n\geq 1$. Therefore 
\begin{align*}
 l_R\left(H^0_{m_R}\left(R/I_{n}\right)\right) &\leq l_R\left(H^0_{m_R}\left(R/J_{n}\right)\right)\\
 &= (cn - a_{n})(cn - b_{n}) + (a_{1}n-a_{n})(cn^2 - cn)\\
 &\qquad + (b_{1}n - b_{n})(cn^2 - cn)\\
 &\leq (a_{1}+b_{1})cn^{3} + c^2n^2
\end{align*}
for all $n\geq 1$. The conclusions of the theorem follow from the above inequality.
\end{proof}

However we suspect that the denominator in this theorem can be improved. The subsequent example seems to suggest a possibility to sharpen it.

\begin{example}\label{limit}
Let $R=K[X,Y]$ be the polynomial ring in two variables over a field $K$ and let $m_R = (X,Y)$ be the graded maximal ideal of $R$. Consider the following family of ideals in $R$ recursively defined by
\begin{align*}
    I_0 &= R,\\
    I_1 &= \left(XY\right),\\
    I_n &= \left(XY^{n^2},X^{n^2}Y\right) + \sum\limits_{t=1}^{n-1}I_t I_{n-t} \quad \forall n\geq 2.
\end{align*}
Then $\mathcal{I} = \{I_n\}_{n\in\mathbb{N}}$ is a filtration of $R$ by monomial ideals and $$\lim\limits_{n\to\infty}\dfrac{l_R\left(H^0_{m_R}\left(R/I_n\right)\right)}{n^2\ln n} = 2.$$
\end{example}

This example was inspired by computations in Macaulay2. Before proving this example, we require a result.

\begin{proposition}\label{approx}
Let $R=K[X,Y]$ be the polynomial ring in two variables over a field $K$. Consider the following family of ideals in $R$ defined by
\begin{align*}
    J_0 &= R,\\
    J_n &= \left\langle X^aY^b \mid ab\geq n^2, b\geq n \right\rangle \; \forall n\geq 1,\\
    J^{\prime}_0 &= R,\\
    J^{\prime}_n &= \left\langle X^aY^b \mid ab\geq n^2, a\geq n \right\rangle \; \forall n\geq 1.
\end{align*}
Then the following statements are true.
\begin{enumerate}
\item[$(i)$] $\mathcal{J} = \{J_n\}_{n\in\mathbb{N}}$ is a filtration of $R$ by monomial ideals.
\item[$(ii)$] $YJ_n \subseteq (XY^{n^2}) + \sum\limits_{t=1}^{n-1}J_tJ_{n-t} \subseteq J_n$, $\forall n\geq 2$.
\item[$(iii)$] $\mathcal{J}^{\prime} = \{J^{\prime}_n\}_{n\in\mathbb{N}}$ is a filtration of $R$ by monomial ideals.
\item[$(iv)$] $XJ^{\prime}_n \subseteq (X^{n^2}Y) + \sum\limits_{t=1}^{n-1}J^{\prime}_tJ^{\prime}_{n-t} \subseteq J^{\prime}_n$, $\forall n\geq 2$.
\end{enumerate}
\end{proposition}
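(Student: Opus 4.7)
The plan is to prove (i) and (ii); statements (iii) and (iv) then follow by applying the $K$-algebra automorphism of $R=K[X,Y]$ that swaps $X$ and $Y$, which carries $\mathcal{J}$ onto $\mathcal{J}'$, the ideal $(XY^{n^2})$ onto $(X^{n^2}Y)$, and $YJ_n$ onto $XJ_n'$.

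For (i), each $J_n$ is manifestly a monomial ideal. The inclusion $J_{n+1} \subseteq J_n$ is immediate, since every monomial generator $X^aY^b$ of $J_{n+1}$ satisfies $ab \geq (n+1)^2 > n^2$ and $b \geq n+1 > n$. To verify the graded family property $J_nJ_m \subseteq J_{n+m}$ I would take generators $X^{a_i}Y^{b_i}$ of $J_n$ and $J_m$ respectively, observe $b_1+b_2 \geq n+m$, and bound the product
$$(a_1+a_2)(b_1+b_2) = a_1b_1 + a_2b_2 + (a_1b_2 + a_2b_1) \geq n^2 + m^2 + 2\sqrt{a_1b_1 \cdot a_2b_2} \geq (n+m)^2$$
using AM-GM on the cross term.

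For (ii), the containment on the right follows from (i) together with $XY^{n^2} \in J_n$. For the left containment it suffices to check, for each minimal monomial generator $g$ of $J_n$, that $Yg$ lies in $(XY^{n^2}) + \sum_{t=1}^{n-1}J_tJ_{n-t}$. The minimal generators are $X^aY^{\lceil n^2/a \rceil}$ for $1 \leq a \leq n$. The case $a=1$ is immediate from $XY^{n^2+1} = Y \cdot XY^{n^2} \in (XY^{n^2})$. For $2 \leq a \leq n$, writing $n = qa + r$ with $q = \lfloor n/a \rfloor \geq 1$ and $0 \leq r < a$, and abbreviating $b = \lceil n^2/a \rceil$, the plan is to produce the factorization
$$X^aY^{b+1} = (XY^{q^2})(X^{a-1}Y^{b+1-q^2}).$$
The first factor lies in $J_q$ for obvious reasons, and both $q$ and $n-q$ lie in $\{1,\ldots,n-1\}$ since $a\geq 2$. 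The task reduces to showing $X^{a-1}Y^{b+1-q^2} \in J_{n-q}$, i.e., verifying $(a-1)(b+1-q^2) \geq (n-q)^2$ and $b+1-q^2 \geq n-q$. Expanding
$$b = aq^2 + 2qr + \lceil r^2/a \rceil, \qquad (n-q)^2 = (a-1)^2q^2 + 2(a-1)qr + r^2,$$
the product inequality reduces to the elementary estimate $(a-1)(\lceil r^2/a \rceil + 1) \geq r^2$, which holds because $r^2 \leq (a-1)^2 \leq a(a-1)$ as $r<a$. The inequality $b+1-q^2 \geq n-q$ is a routine consequence of the same expansion.

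The main obstacle is locating the correct split. The guiding heuristic is that, as a function of a real variable $t$, the quantity $t^2 + (n-t)^2/(a-1)$ attains its minimum $n^2/a$ at $t = n/a$, matching the lower bound $b \geq n^2/a$ coming from $ab \geq n^2$; the integer choice $t = \lfloor n/a \rfloor$ realizes this optimum, and the bound $r^2/(a(a-1)) < 1$ (from $r \leq a-1$) is precisely what absorbs the ceiling rounding error.
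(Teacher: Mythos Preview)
Your proposal is correct and follows essentially the same route as the paper: the same AM--GM argument for (i), the same reduction in (ii) to the minimal generators $X^aY^{\lceil n^2/a\rceil}$, and the same splitting $X^aY^{b+1}=(XY^{q^2})(X^{a-1}Y^{b+1-q^2})$ with $q=\lfloor n/a\rfloor$, the key estimate in both cases being $r^2\leq (a-1)^2\leq a(a-1)$. The only cosmetic difference is that the paper phrases the verification as the ceiling inequality $\lceil n^2/a\rceil+1\geq \lceil (n-q)^2/(a-1)\rceil+q^2$ via fractional parts, whereas you check the defining conditions of $J_{n-q}$ directly; your added optimization heuristic for the choice of $q$ is a nice touch absent from the paper.
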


\begin{proof}
We shall only provide proofs of the parts $(i)$ and $(ii)$ because the proof of the parts $(iii)$ and $(iv)$ are analogous to the proof of the parts $(i)$ and $(ii)$ respectively.

\noindent
$(i)$ It is an easy exercise to verify that $J_n \supset J_{n+1}$ for all $n\geq 0$. Suppose that $X^{a_1}Y^{b_1} \in J_n$ and $X^{a_2}Y^{b_2} \in J_m$. Then
    \begin{align*}
        (a_1+a_2)(b_1+b_2) &= a_1b_1 + a_2b_2 + a_1b_2 + a_2b_1\\
        &\geq a_1b_1 + a_2b_2 + 2\sqrt{a_1b_1a_2b_2} \quad\text{(AM-GM inequality)}\\
        &\geq n^2 + m^2 + 2\sqrt{n^2m^2}\\
        &= (n+m)^2
    \end{align*}
 and $b_1+b_2 \geq (n+m)$. Hence $X^{a_1}Y^{b_1}X^{a_2}Y^{b_2} = X^{a_1+a_2}Y^{b_1+b_2} \in J_{n+m}$. 
 
\vspace{0.15cm} 
\noindent
$(ii)$ As $XY^{n^2}\in J_n$ and $\mathcal{J} = \{J_n\}_{n\in\mathbb{N}}$ is a graded family, it follows that $$(XY^{n^2}) + \sum\limits_{t=1}^{n-1}J_tJ_{n-t} \subseteq J_n.$$ Note that $J_n$ can be explicitly described as $$J_n = \left\langle X^aY^{\left\lceil \dfrac{n^2}{a}\right\rceil} \mid 1\leq a \leq n \right\rangle ,$$ where $\lceil x \rceil$ denotes the smallest integer greater than or equal to $x$. It now suffices to show that $$X^aY^{\left\lceil \dfrac{n^2}{a}\right\rceil + 1} \in (XY^{n^2}) + \sum\limits_{t=1}^{n-1}J_tJ_{n-t}$$ for all $1\leq a \leq n$. If $a=1$ then $XY^{n^2 +1} \in (XY^{n^2}).$ Hence assume that $2\leq a \leq n$. By means of Euclidean division, $n$ can be uniquely written as $n = qa + r$, where $q\geq 1$ and $0\leq r\leq a-1$. There exist expressions 
           \begin{align*}
             \left\lceil \dfrac{n^2}{a}\right\rceil &= \dfrac{n^2}{a} + \delta_1,\\
              \left\lceil \dfrac{(n-q)^2}{a-1}\right\rceil &= \dfrac{(n-q)^2}{a-1} + \delta_2,
           \end{align*}
          where $0\leq \delta_i < 1$ for $i=1,2$. We note that
        \begin{align*}
           \left\lceil \dfrac{n^2}{a}\right\rceil + 1 &= \dfrac{n^2}{a} + 1 + \delta_1\\
           &= \dfrac{(n-q)^2}{a-1} + q^2 - \dfrac{(n-aq)^2}{a(a-1)} + (1+\delta_1)\\
           &= \dfrac{(n-q)^2}{a-1} + q^2 + \left(1 + \delta_1 - \dfrac{r^2}{a(a-1)}\right)\\
           &= \left\lceil\dfrac{(n-q)^2}{a-1}\right\rceil + q^2 + \left(1 + \delta_1 - \delta_2 - \dfrac{r^2}{a(a-1)}\right)\\
           &\geq \left\lceil\dfrac{(n-q)^2}{a-1}\right\rceil + q^2.
        \end{align*}
    In the last step, we use the fact that the left hand side is an integer and the expression on the right hand side lying inside the parenthesis, is strictly greater than $-1$. Therefore $$X^aY^{\left\lceil\dfrac{n^2}{a}\right\rceil +1} \in \Bigg(X^aY^{\left\lceil\dfrac{(n-q)^2}{a-1}\right\rceil + q^2}\Bigg) = \Bigg(X^{a-1}Y^{\left\lceil\dfrac{(n-q)^2}{a-1}\right\rceil}\Bigg)(XY^{q^2}) \subset J_{n-q}J_q.$$
\end{proof}

\begin{proof}[Proof of example \ref{limit}]
A straightforward argument will show that $\mathcal{I} = \{I_n\}_{n\in\mathbb{N}}$ is a filtration of $R$ by monomial ideals. We now recall the family of monomial ideals $\mathcal{J} = \{J_n\}_{n\in\mathbb{N}}$ and $\mathcal{J}^{\prime} = \{J^{\prime}_n\}_{n\in\mathbb{N}}$ in $R$, as defined in Proposition \ref{approx}. We claim that 
\begin{equation}\label{bounds}
\left(Y^{n-1}J_n + X^{n-1}J^{\prime}_n\right) \subset I_n \subset \left(J_n + J^{\prime}_n\right)
\end{equation}
for all $n\geq 1$. We shall first establish that $Y^{n-1}J_n \subset I_n$ for all $n\geq 1$. By means induction hypothesis, we may assume that $Y^{t-1}J_t \subset I_t$ for all $1\leq t\leq n$. Then from Proposition \ref{approx}$(ii)$, we get $$ YJ_{n+1} \subset \left(XY^{(n+1)^2}\right) + \sum\limits_{t=1}^n J_tJ_{n+1-t},$$ which implies
   \begin{align*}
    Y^nJ_{n+1} &\subset Y^{n-1}\left(XY^{(n+1)^2}\right) + \sum\limits_{t=1}^n \left(Y^{t-1}J_t\right)\left(Y^{n-t}J_{n+1-t}\right)\\
     &\subset \left(XY^{(n+1)^2}\right) + \sum\limits_{t=1}^n \left(Y^{t-1}J_t\right) \left(Y^{n-t}J_{n+1-t}\right)\\
     &\subset \left(XY^{(n+1)^2}\right) + \sum\limits_{t=1}^n I_t I_{n+1-t} \quad \text{(induction hypothesis)}\\
     &\subset I_{n+1}.
   \end{align*}
 By a similar argument, it can be shown that $X^{n-1}J^{\prime}_n \subset I_n$ for all $n\geq 1$. Combining, we get that $\left(Y^{n-1}J_n + X^{n-1}J^{\prime}_n\right) \subset I_n$ for all $n\geq 1$. Using arguments similar to the proof of proposition \ref{approx}$(i)$, it can be shown that $$\mathcal{J} + \mathcal{J^{\prime}} := \{J_n + J^{\prime}_n\}_{n\in\mathbb{N}} = \left\{\left\langle X^aY^b \mid ab\geq n^2 \right\rangle\right\}_{n\in\mathbb{N}}$$ is a filtration of $R$ by monomial ideals. A straightforward inductive proof will show that $I_n \subset \left(J_n + J^{\prime}_n\right)$ for all $n\geq 0$. The succeeding equalities can be verified.
\begin{align*}
  &Y^{n-1}J_n + X^{n-1}J^{\prime}_n = \left\langle X^aY^{\left\lceil \dfrac{n^2}{a}\right\rceil + n-1} \mid 1\leq a \leq n \right\rangle + \left\langle X^{\left\lceil \dfrac{n^2}{a}\right\rceil + n-1}Y^a \mid 1\leq a \leq n \right\rangle,\\
  &J_n + J^{\prime}_n = \left\langle X^aY^{\left\lceil \dfrac{n^2}{a}\right\rceil} \mid 1\leq a \leq n^2 \right\rangle,\\
   &\left(Y^{n-1}J_n + X^{n-1}J^{\prime}_n\right) \colon_R m_R^{\infty} = \left(J_n + J^{\prime}_n\right) \colon_R m_R^{\infty} = I_n \colon_R m_R^{\infty} = \left(XY\right).
\end{align*}
 Therefore 
\begin{align*}
  &l_R\left(H^0_{m_R}\left(\dfrac{R}{Y^{n-1}J_n + X^{n-1}J^{\prime}_n}\right)\right) = 2\left(\sum\limits_{a=1}^n \left\lceil\dfrac{n^2}{a}\right\rceil\right) + 2\left(n^2-3n+1\right),\\
  &l_R\left(H^0_{m_R}\left(\dfrac{R}{J_n + J^{\prime}_n}\right)\right) = 2\left(\sum\limits_{a=1}^n \left\lceil\dfrac{n^2}{a}\right\rceil\right) - (n^2+2n-1).
\end{align*} 
 From \eqref{bounds}, it now follows that $$2\left(\sum\limits_{a=1}^n \left\lceil\dfrac{n^2}{a}\right\rceil\right) - (n^2+2n-1) \leq l_R\left(H^0_{m_R}\left(\dfrac{R}{I_n}\right)\right) \leq 2\left(\sum\limits_{a=1}^n \left\lceil\dfrac{n^2}{a}\right\rceil\right) + 2\left(n^2-3n+1\right).$$ Using elementary calculus, it can be shown that $$\lim_{n\to\infty}\dfrac{\sum\limits_{a=1}^n \left\lceil\dfrac{n^2}{a}\right\rceil}{n^2\ln{n}} = 1.$$ The conclusions of the example now follow from the squeeze theorem in calculus.
\end{proof}

\begin{corollary}
Let $R=K[X,Y]$ be the polynomial ring in two variables over a field $K$ and let $m_R = (X,Y)$ be the graded maximal ideal of $R$. Let $\mathcal{Q}=\{Q_n\}_{n\in\mathbb{N}}$ be any graded family of ideals in $R$ such that 
\begin{align*}
Q_0 &= R,\\
Q_1 &= \left(XY\right),\\
\left(XY^{n^2}, X^{n^2}Y\right) &\subset Q_n \subset \left(XY\right) \quad \forall n\geq 2.
\end{align*}
Then $$\limsup\limits_{n\to\infty} \dfrac{l_R\left(H^0_{m_R}\left(R/Q_n\right)\right)}{n^2\ln n} \leq 2.$$
\end{corollary}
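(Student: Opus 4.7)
The plan is to dominate the given family $\mathcal{Q}=\{Q_n\}$ pointwise by the explicit filtration $\mathcal{I}=\{I_n\}$ from Example \ref{limit} and transfer its asymptotic estimate. Specifically, I would first establish by induction on $n$ that $I_n \subset Q_n$ for every $n \geq 0$. The base cases $I_0=R=Q_0$ and $I_1=(XY)=Q_1$ are immediate from the hypotheses. For the inductive step, using the recursive definition of $I_n$ together with the hypothesis $(XY^{n^2},X^{n^2}Y)\subset Q_n$ and the fact that $\mathcal{Q}$ is a graded family (so $Q_tQ_{n-t}\subset Q_n$), I would write
\[
 I_n = (XY^{n^2},X^{n^2}Y) + \sum_{t=1}^{n-1}I_tI_{n-t} \subset Q_n + \sum_{t=1}^{n-1}Q_tQ_{n-t} \subset Q_n.
\]

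Next, I would compute the saturation $Q_n\colon_R m_R^{\infty}$. Since $Q_n \subset (XY)$ and $(XY)\colon_R m_R^{\infty}=(XY)$, the saturation is contained in $(XY)$. Conversely, $Q_n \supset (XY^{n^2},X^{n^2}Y)$, and a direct monomial calculation (a monomial $X^aY^b$ with $a,b\geq 1$ lies in the saturation of $(XY^{n^2},X^{n^2}Y)$) shows
\[
 (XY)=\bigl((XY^{n^2},X^{n^2}Y)\colon_R m_R^{\infty}\bigr)\subset \bigl(Q_n\colon_R m_R^{\infty}\bigr).
\]
Hence $Q_n\colon_R m_R^{\infty}=(XY)$, and the same identity was verified for $I_n$ inside the proof of Example \ref{limit}. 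Therefore
\[
 H^0_{m_R}(R/Q_n) \cong (XY)/Q_n, \qquad H^0_{m_R}(R/I_n) \cong (XY)/I_n.
\]

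The containment $I_n\subset Q_n$ gives a surjection $(XY)/I_n \twoheadrightarrow (XY)/Q_n$, so
\[
 l_R\bigl(H^0_{m_R}(R/Q_n)\bigr) \leq l_R\bigl(H^0_{m_R}(R/I_n)\bigr)
\]
for every $n\geq 1$. Dividing by $n^2\ln n$ and passing to $\limsup$, the right-hand side tends to $2$ by Example \ref{limit}, yielding the desired inequality. There is no serious obstacle here; the only slightly delicate point is the saturation computation, but this is a routine monomial argument of the same flavor as the analogous identity already used in Example \ref{limit}.
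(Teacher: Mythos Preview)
Your proof is correct and follows the same overall strategy as the paper: establish $I_n \subset Q_n$ by induction using the recursive definition of $I_n$ and the graded-family property of $\mathcal{Q}$, derive the length inequality $l_R\bigl(H^0_{m_R}(R/Q_n)\bigr) \leq l_R\bigl(H^0_{m_R}(R/I_n)\bigr)$, and then invoke Example~\ref{limit}. The only difference is in how the length inequality is justified: the paper uses the long exact sequence of local cohomology applied to $0\to Q_n/I_n\to R/I_n\to R/Q_n\to 0$ together with the observation that $Q_n/I_n$ is Artinian (so $H^1_{m_R}(Q_n/I_n)=0$), whereas you compute the saturation $Q_n\colon_R m_R^{\infty}=(XY)$ directly and use the surjection $(XY)/I_n \twoheadrightarrow (XY)/Q_n$; your route is slightly more elementary but the two arguments are essentially equivalent.
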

\begin{proof}
Recall the family of monomial ideals $\mathcal{I} = \{I_n\}_{n\in\mathbb{N}}$ as defined in Example \ref{limit}. Using induction it can be shown that $I_n \subset J_n$ for all $n\geq 0$. Then $$0 \to H^0_{m_R}\left(\dfrac{J_n}{I_n}\right) \to H^0_{m_R}\left(\dfrac{R}{I_n}\right) \to H^0_{m_R}\left(\dfrac{R}{J_n}\right) \to H^1_{m_R}\left(\dfrac{J_n}{I_n}\right) \to \cdots$$ is a natural long exact sequence of local cohomology modules. Also $J_n/I_n$ is an Artinian $R$-module since it is annihilated by the $m_R$-primary ideal $\left(X^{n^2-1}, Y^{n^2-1}\right)$. Therefore $H^1_{m_R}\left(J_n/I_n\right) = 0$, which shows that $$0 \to H^0_{m_R}\left(\dfrac{J_n}{I_n}\right) \to H^0_{m_R}\left(\dfrac{R}{I_n}\right) \to H^0_{m_R}\left(\dfrac{R}{J_n}\right) \to 0$$ is an exact sequence of finite length $R$-modules. So $$l_R\left(H^0_{m_R}\left(\dfrac{R}{J_n}\right)\right) = l_R\left(H^0_{m_R}\left(\dfrac{R}{I_n}\right)\right) - l_R\left(H^0_{m_R}\left(\dfrac{J_n}{I_n}\right)\right) \leq l_R\left(H^0_{m_R}\left(\dfrac{R}{I_n}\right)\right).$$ It now follows from Example \ref{limit} that $$\limsup_{n\to\infty}\dfrac{l_R\left(H^0_{m_R}\left(R/Q_n\right)\right)}{n^2\ln{n}}\leq \lim_{n\to\infty}\dfrac{l_R\left(H^0_{m_R}\left(R/I_n\right)\right)}{n^2\ln{n}} = 2.$$
\end{proof}
\bibliographystyle{plain}
\bibliography{References}

\begin{thebibliography}{10}

\bibitem{JO}
JO~Amao.
\newblock On a certain hilbert polynomial.
\newblock {\em Journal of the London Mathematical Society}, 2(1):13--20, 1976.

\bibitem{BA}
Carles Bivi\`{a}-Ausina.
\newblock The analytic spread of monomial ideals.
\newblock {\em Communications in Algebra}, 31(7):3487--3496, 2003.

\bibitem{montano}
Yairon {Cid-Ruiz} and Jonathan {Monta{\~n}o}.
\newblock {Mixed multiplicities of graded families of ideals}.
\newblock {\em arXiv e-prints}, page arXiv:2010.11862, October 2020.

\bibitem{DC5}
Steven~Dale Cutkosky.
\newblock Asymptotic growth of saturated powers and epsilon multiplicity.
\newblock {\em Math. Res. Lett.}, 18:93--106, 2011.

\bibitem{DC6}
Steven~Dale Cutkosky.
\newblock Asymptotic multiplicities of graded families of ideals and linear
  series.
\newblock {\em Advances in Mathematics}, 264:55--113, 2014.

\bibitem{DC1}
Steven~Dale Cutkosky.
\newblock Asymptotic multiplicities.
\newblock {\em Journal of Algebra}, 442:260--298, 2015.

\bibitem{DC3}
Steven~Dale Cutkosky, Huy~T{\`a}i H{\`a}, Hema Srinivasan, and Emanoil
  Theodorescu.
\newblock Asymptotic behavior of the length of local cohomology.
\newblock {\em Canadian Journal of Mathematics}, 57(6):1178--1192, 2005.

\bibitem{DC4}
Steven~Dale Cutkosky, J{\"u}rgen Herzog, and Hema Srinivasan.
\newblock Asymptotic growth of algebras associated to powers of ideals.
\newblock {\em Mathematical Proceedings of the Cambridge Philosophical
  Society}, 148(1):55--72, 2010.

\bibitem{DM}
Hailong Dao and Jonathan Monta{\~n}o.
\newblock Length of local cohomology of powers of ideals.
\newblock {\em Transactions of the American Mathematical Society}, 371, 05
  2019.

\bibitem{das}
Suprajo {Das}.
\newblock {Epsilon multiplicity for graded algebras}.
\newblock {\em Journal of Pure and Applied Algebra}, 225(10), 2021.

\bibitem{HHRT}
Manfred Herrmann, Eero Hyry, J\"{u}rgen Ribbe, and Zhongming Tang.
\newblock Reduction numbers and multiplicities of multigraded structures.
\newblock {\em Journal of Algebra}, 197(2):311--341, 1997.

\bibitem{J}
J{\"u}rgen Herzog, Tony~J Puthenpurakal, and Jugal~K Verma.
\newblock Hilbert polynomials and powers of ideals.
\newblock {\em Mathematical Proceedings of the Cambridge Philosophical
  Society}, 145(3):623--642, 2008.

\bibitem{HS}
Craig Huneke and Irena Swanson.
\newblock {\em Integral closure of ideals, rings, and modules}, volume~13.
\newblock Cambridge University Press, 2006.

\bibitem{JM}
Jack Jeffries and Jonathan Monta\~{n}o.
\newblock The $j$-multiplicity of monomial ideals.
\newblock {\em Mathematical Research Letters}, 20:729--744, 12 2013.

\bibitem{Mat1}
Hideyuki Matsumura.
\newblock {\em Commutative algebra}, volume~56.
\newblock Addison Wesley Longman, 1970.

\bibitem{IS}
Irena Swanson.
\newblock Powers of ideals.
\newblock {\em Mathematische Annalen}, 307(2):299--313, 1997.

\bibitem{TAK}
Yukihide Takayama.
\newblock Combinatorial characterizations of generalized cohen-macaulay
  monomial ideals.
\newblock {\em Bull. Math. Soc. Sci. Math. Roumanie (N.S.)}, 48
  (96)(3):327--344, 2005.

\bibitem{BJ2}
Bernd Ulrich and Javid Validashti.
\newblock Numerical criteria for integral dependence.
\newblock {\em Mathematical Proceedings of the Cambridge Philosophical
  Society}, 151(1):95--102, 2011.

\bibitem{KW}
Kevin {Woods}.
\newblock {Presburger arithmetic, rational generating functions and
  quasi-polynomials}.
\newblock {\em Journal of Symbolic Logic}, 80(2):433--449, 2015.

\end{thebibliography}
\end{document}